\documentclass[11pt]{article}
\usepackage{amsmath,amssymb,amsthm}
\usepackage{float}
\usepackage{xcolor}
\usepackage{authblk}
\usepackage{autobreak}
\usepackage{enumerate}
\usepackage{hyperref}
\hypersetup{
	colorlinks,
	linkcolor={red!60!black},
	citecolor={green!60!black},
	urlcolor={blue!60!black}
}

\usepackage{tikz}
\usetikzlibrary{calc}

\definecolor{matchblue}{RGB}{40,60,180}
\definecolor{matchorange}{RGB}{230,130,0}
\definecolor{matchred}{RGB}{220,0,0}
\definecolor{matchgreen}{RGB}{0,150,70}
\definecolor{matchpink}{RGB}{235,170,180}

\newtheorem{theorem}{Theorem}[section]
\newtheorem{lemma}[theorem]{Lemma}
\newtheorem{corollary}[theorem]{Corollary}
\theoremstyle{definition}

\newtheorem{claim}[theorem]{Claim}
\newtheorem{conjecture}[theorem]{Conjecture}

\newtheorem*{repthm1}{Theorem~\ref{thm:main1}}
\newtheorem*{repthm2}{Theorem~\ref{thm:main2}}
\newtheorem*{repthm3}{Theorem~\ref{thm:main3}}
\newtheorem*{repthm4}{Theorem~\ref{thm:main4}}

\begin{document}
\title{Ramsey numbers and Gallai--Ramsey numbers of disjoint unions of cherries}
\author{Yanbo Zhang$^{1}$,\; Qian Chen$^{2}$,\; and Yaojun Chen$^{2}$\\
{\small $^1$School of Mathematical Sciences, Hebei Normal University, Shijiazhuang 050024, China}\\
{\small $^2$School of Mathematics, Nanjing University, Nanjing 210093, China}}
\date{}
\maketitle
\begin{quote}
{\bf Abstract:} For graphs $G_1,\ldots,G_k$, the Ramsey number $R(G_1,\ldots,G_k)$ is the smallest positive integer $N$ such that every $k$-edge-coloring of $K_N$ contains a monochromatic copy of $G_i$ in color $i$ for some $i\in[k]$. The Gallai--Ramsey number $GR(G_1,\ldots,G_k)$ is defined analogously, with the colorings restricted to Gallai colorings (i.e., edge-colorings with no rainbow triangle). 

A copy of $P_3$ is called a cherry. Let $n_iP_3$ denote the disjoint union of $n_i$ cherries. Wu, Magnant, Nowbandegani, and Xia (Discrete Appl. Math., 2019) proposed two conjectures:
\[
R(n_1P_3,\ldots,n_kP_3)=N\ \text{and}\ GR(n_1P_3,\ldots,n_kP_3)=N\,,
\]
where $N=2\max\{n_1,\ldots,n_k\}+\sum_{i=1}^kn_i-k+1$. We disprove the Ramsey conjecture and provide some sufficient conditions for determining the exact value of $R(n_1P_3,\ldots,n_kP_3)$. In contrast, we confirm the Gallai--Ramsey conjecture.

{\bf Keywords:} Ramsey number, Gallai--Ramsey number, cherries

{\bf AMS Subject Classification:} 05C55, 05D10
\end{quote}

\section{Introduction}

We denote by $[q,k]$ the integer interval $\{q,q+1,\ldots,k\}$, and write $[k]$ for $[1,k]$. 
Given a collection of graphs $G_1,\ldots,G_k$, the \emph{Ramsey number} $R(G_1,\ldots,G_k)$ is defined as the smallest positive integer $n$ such that every edge-coloring $\tau:E(K_n)\to[k]$ contains a monochromatic copy of $G_i$ in color $i$ for some $i\in[k]$. When $G_1=\cdots=G_k=G$, it is customary to abbreviate $R(G_1,\ldots,G_k)$ as $R_k(G)$.

A \emph{linear forest} is a forest whose connected components are nontrivial paths. If all its components have the same order, then the linear forest is called a \emph{path-matching}. Such a graph is usually denoted by $nP_m$, namely, the vertex-disjoint union of $n$ copies of the path $P_m$ on $m$ vertices.

In this paper, we are mainly concerned with the case $m=3$. A copy of $K_{1,2}\cong P_3$ is often called a \emph{cherry}; this terminology goes back at least to Erd\H{o}s and R\'enyi~\cite{ER63}, where it was used for two leaves adjacent to a common vertex. It is also used in the modern sense, for instance, by Sudakov and Volec~\cite{Sudakov2017}. Thus $nP_3$ denotes the vertex-disjoint union of $n$ cherries.

For two-color Ramsey numbers, Faudree and Schelp~\cite{Faudree1976} proved that
\[
R(n_1P_m,n_2P_m)=n_1m+n_2\left\lfloor \frac{m}{2}\right\rfloor-1
\]
for all $n_1\ge n_2$. In fact, they determined the exact two-color Ramsey numbers for all linear forests. For three colors, Scobee~\cite{Scobee1993} determined the Ramsey number
\[
R(m_1P_3,m_2P_3,m_3P_3)=3m_1+m_2+m_3-2
\]
for $m_1\ge m_2\ge m_3$ and $m_1\ge 2$, using a proof spanning about 20 pages.

For the Ramsey numbers of $nP_3$ in four or more colors, a complete determination currently appears to be very difficult. DeBiasio, Gy\'arf\'as, and S\'ark\"ozy~\cite{DeBiasio2021} turned to the more flexible problem of forcing a monochromatic linear forest of prescribed order, without specifying its exact structure. Given positive integers $k,n_1,\ldots,n_k$, let $R^{PM}(n_1,\ldots,n_k)$ denote the smallest integer $n$ such that every $k$-edge-coloring of $K_n$ contains a linear forest of color $i$ and order at least $n_i$ for some $i\in[k]$. They proved that for $k\ge 2$ and $n_1\ge \cdots \ge n_k\ge 2$, if $n_1\ge 2k-2$, then
\[
R^{PM}(n_1,\ldots,n_k)
= n_1+\sum_{i=2}^{k}\left\lceil \frac{n_i}{3}\right\rceil -k+1 .
\]

Cockayne and Lorimer~\cite{Cockayne1975} determined the multicolor Ramsey numbers for matchings of the form $nP_2$. More precisely, they proved that
\[
R(n_1P_2,\ldots,n_kP_2)=n+\sum_{i=1}^{k}n_i-k+1,
\]
where $n=\max\{n_1,\ldots,n_k\}$. Motivated by this lower-bound construction, Wu, Magnant, Nowbandegani, and Xia~\cite{Wu2019} replaced $P_2$ with $P_3$ and obtained the analogous lower bound
\[
R(n_1P_3,\ldots,n_kP_3)\ge 2n+\sum_{i=1}^{k}n_i-k+1,
\]
where $n=\max\{n_1,\ldots,n_k\}$. They further conjectured that this lower bound is tight.

\begin{conjecture}[Wu, Magnant, Nowbandegani, and Xia, 2019]\label{Wuconjecture2}
Let $k,n_1,\ldots,n_k$ be positive integers and let $n=\max\{n_1,\ldots,n_k\}$. Then
\[
R(n_1P_3,\ldots,n_kP_3)=2n+\sum_{i=1}^{k}n_i-k+1.
\]
\end{conjecture}

We disprove this conjecture by proving the following result.
\begin{theorem}\label{thm:main1}
  For positive integers $k, n_1, \ldots, n_k$, we have 
	\[R(n_1P_3, \ldots, n_kP_3) \ge 2\max\{\left\lceil k/2 \right\rceil, n_1, \ldots, n_k\} + \sum_{i=1}^{k} n_i-k+1.\]
\end{theorem}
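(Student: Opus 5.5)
The plan is an explicit coloring of $K_{N-1}$, where $N$ is the right-hand side, with no color-$i$ copy of $n_iP_3$ for any $i$. Put $n=\max\{n_1,\ldots,n_k\}$ and $m=\lceil k/2\rceil$. The lower bound $2n+\sum_i n_i-k+1$ is the one quoted above from Wu, Magnant, Nowbandegani and Xia. So it suffices to treat the case $m>n$, and then the target is a coloring of $K_{N-1}$ with $N-1=2m+\sum_{i=1}^k (n_i-1)$. For completeness I will also recall the construction for the $n$-term: fix $j$ with $n_j=n$, take a clique $B$ on $3n-1$ vertices colored entirely with $j$, and add sets $W_i$ of size $n_i-1$ for $i\ne j$, handled exactly as in the main case below.

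\textbf{Main case.} Take pairwise disjoint sets $W_1,\ldots,W_k$ with $|W_i|=n_i-1$, and a set $B$ with $|B|=2m$. Order the blocks as $W_1,W_2,\ldots,W_k,B$. Any edge with at least one endpoint outside $B$ gets color $i$, where $W_i$ is the earliest block containing one of its endpoints. Inside $B$, use a $1$-factorization of $K_{2m}$ into $2m-1$ perfect matchings. When $k$ is even, $2m-1=k-1\le k$; when $k$ is odd, $2m-1=k$. In either case we can give each perfect matching its own distinct color. Hence every color class restricted to $B$ is a matching, possibly empty.

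\textbf{Verification.} Take any cherry in color $i$. It cannot lie entirely inside $B$, because there color $i$ induces a matching, which contains no path on three vertices. So it uses a vertex outside $B$, and hence an edge of color $i$ with an endpoint outside $B$. By the coloring rule, every such edge has an endpoint in $W_i$. Therefore every color-$i$ cherry contains a vertex of $W_i$. Vertex-disjoint cherries use distinct such vertices, so there are at most $|W_i|=n_i-1$ of them, and no color-$i$ copy of $n_iP_3$ appears.

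In the $n$-term construction, color $j$ is handled the same way. Its cherries lie inside $B$, and $B$ has only $3n-1<3n_j$ vertices, so it cannot hold $n_j$ disjoint cherries.

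\textbf{Conclusion and main obstacle.} The vertex count is $2\max\{m,n\}+\sum_i n_i-k=N-1$, which gives the theorem. The only delicate point is the parity bookkeeping: we need $K_{2m}$ to split into at most $k$ matchings, and this holds precisely because $2m-1\le k$ when $m=\lceil k/2\rceil$.
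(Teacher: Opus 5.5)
Your proposal is correct and follows the paper's proof. The $2\lceil k/2\rceil$ term comes from a $1$-factorization of $K_{2\lceil k/2\rceil}$ with one color per matching, plus blocks of $n_i-1$ vertices whose incident edges receive color $i$ (Lemma~\ref{lem:main1.1}), and the $2n$ term comes from the clique on $3n-1$ vertices (Lemma~\ref{lem:main1.2}). The only difference is cosmetic: you give an edge between two added blocks the color of the earlier block rather than the later one, which leaves the vertex-cover argument unchanged.
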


Consequently, Conjecture~\ref{Wuconjecture2} fails whenever $k>2\max\{n_1,\ldots,n_k\}$.

By comparing $k$ with $2\max\{n_1,\ldots,n_k\}$, we see that Theorem~\ref{thm:main1} yields two different lower bounds. We next present two theorems showing that each of these two lower bounds is attained in certain ranges.

\begin{theorem}\label{thm:main2}
  For positive integers $k, n_1, \ldots, n_k$ with $3n_1\ge 4\sum_{i=2}^{k}n_i-2k+4$, we have 
  \[R(n_1P_3,\ldots, n_kP_3)=2n_1+\sum_{i=1}^{k} n_i-k+1.\]
\end{theorem}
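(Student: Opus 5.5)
The lower bound follows from Theorem~\ref{thm:main1}. The hypothesis forces $n_1$ to be the maximum: if $k\ge 2$ then $3n_1\ge 4\sum_{i\ge2}n_i-2k+4\ge 4(k-1)-2k+4=2k$, so $n_1\ge \lceil 2k/3\rceil\ge\lceil k/2\rceil$. Moreover, the condition gives $3n_1\ge 4n_i+2(k-1-1)\cdot 1$ style bounds implying $n_1\ge n_i$. So it remains to prove the upper bound: every $k$-coloring of $K_N$ with $N=3n_1+\sum_{i\ge2}n_i-k+1$ contains color-$i$ $n_iP_3$ for some $i$.

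I would argue by induction on $\sum_{i\ge 2} n_i$ (equivalently on $n_1$ with the others fixed). The base case is $n_2=\cdots=n_k=1$: here $N=3n_1$ and colors $2,\dots,k$ contain no cherry, so each of them is a matching. I would then show that the color-1 graph, the complement of a union of $k-1$ matchings, contains $n_1P_3$ when $3n_1\ge 2k$. This is a minimum-degree argument: color 1 has minimum degree at least $N-k$, and a graph on $3n_1$ vertices with $\delta\ge 3n_1-k\ge 2N/3$ has a perfect cherry tiling. I would cite a Corrádi--Hajnal/Hajnal--Szemerédi type result for $K_{1,2}$-factors, or prove it directly by a standard augmenting argument.

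For the induction step, pick some $j\ge2$ with $n_j\ge 2$. The key observation is that reducing $n_j$ by one lowers $N$ by one and keeps the hypothesis, since $3n_1\ge 4(\sum n_i -1)-2k+4$ still holds. By induction, $K_{N-1}$ on any $N-1$ vertices contains color-$i$ $n_i'P_3$. Here is the plan:
\begin{enumerate}[(i)]
\item If color $j$ contains a cherry $xyz$, delete $x,y,z$. This leaves $N-3$ vertices. Then apply induction with $n_j-1$ and, to compensate for the two extra lost vertices, reduce $n_1$ by one as well. This changes $N$ by $3+1=4$, which is more than the $3$ we removed, so it is fine. The hypothesis must be rechecked, and it holds precisely because the coefficient $4$ on $\sum n_i$ exceeds $3$; this is the reason for the form of the condition. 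The result is either color-$i$ $n_iP_3$ for $i\ne 1,j$, or $(n_j-1)P_3$ in color $j$, which extends with $xyz$, or $(n_1-1)P_3$ in color $1$, which must then be augmented.
\item Augmenting color 1 is the main obstacle. Given $(n_1-1)P_3$ in color 1 together with the $4$ spare vertices (including $x,y,z$), I would use the standard exchange argument of Faudree--Schelp and Cockayne--Lorimer type. If the leftover vertices span no color-1 cherry, their edges are in the other colors. Combining these with edges to a cherry of the color-1 family, one either reroutes to get $n_1$ color-1 cherries or builds extra cherries in other colors. The slack from choosing the best $j$ should absorb this.
\item If no color $j\ge2$ with $n_j\ge2$ has a cherry, then those colors are matchings, and we are back in the base-case degree argument with a larger vertex set.
\end{enumerate}

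I expect step (ii) to be the hardest. I would first try choosing a maximal color-1 cherry family and applying case analysis on the colors of edges between an uncovered vertex set $U$ with $|U|\ge 3$ and each cherry. The condition $3n_1\ge 4\sum n_i-2k+4$ should guarantee that too many non-color-1 edges create $n_iP_3$ in some color $i\ge2$ by counting.
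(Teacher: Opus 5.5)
\noindent There is a genuine gap, and it is the step you yourself flag: (ii) states a hope, not an argument, and your induction gives it nothing to work with. In the bad branch, the induction hypothesis applied to $G-\{x,y,z\}$ returns only a color-$1$ copy of $(n_1-1)P_3$. At that point you have discarded all information about colors $2,\dots,k$.

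The uncovered set $U$ has only $\sum_{i\ge2}(n_i-1)+4$ vertices. So non-color-$1$ edges inside or near $U$ can produce only a bounded number of cherries of a fixed color $i\ge2$. Reaching $n_iP_3$ would require knowing how color $i$ behaves on the $3n_1-3$ vertices covered by the color-$1$ family, and nothing in your scheme provides that. Rerouting color-$1$ cherries through $U$ fails for the same reason: it needs vertices of $U$ to have many color-$1$ neighbours in the family, which is a global degree statement you never establish.

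Choosing the ``best'' $j$ gives no slack, because the bad outcome carries no information about color $j$ whichever $j$ you pick. Also, step (i) only uses that the coefficient of $\sum_{i\ge2}n_i$ is at least $3$, so it does not explain the constant $4$. The quantitative content of the hypothesis would have to be consumed in (ii), which is exactly the part left open.

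The missing idea is to control colors $2,\dots,k$ globally rather than to augment color $1$. The paper does this by applying Theorem~\ref{thm:main4} with $n=3n_1$ and $s=2$:
\begin{itemize}
\item For each $i\ge2$, take a maximum family of disjoint color-$i$ cherries, with vertex set $X_i$ and $|X_i|\le 3(n_i-1)$.
\item An exchange argument shows that in each such cherry at most one vertex has three or more color-$i$ neighbours outside $X_i$.
\item Delete that vertex (or an arbitrary one) from every cherry of every family. The result is a graph $G'$ with $|G'|\ge N-\sum_{i\ge2}(n_i-1)=3n_1$ in which every vertex has color-$i$ degree at most $2n_i-1$ for each $i\ge2$.
\item Hence the color-$1$ minimum degree in $G'$ is at least $|G'|-1-\sum_{i\ge2}(2n_i-1)\ge |G'|/2$. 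This is precisely where $3n_1\ge 4\sum_{i\ge2}n_i-2k+4$ is used.
\item Bondy's theorem then yields a color-$1$ $C_{3n_1}$. Dirac's theorem would also suffice, since a Hamiltonian cycle on at least $3n_1$ vertices contains $n_1P_3$.
\end{itemize}

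A smaller error is in your base case. The condition $3n_1\ge 2k$ gives only $\delta\ge 3n_1-k\ge N/2$, not $2N/3$; for example, $k=3$, $n_1=2$ gives $\delta\ge3<4=2N/3$. So the Hajnal--Szemer\'edi-type citation does not apply as stated. The repair is easy: $\delta\ge N/2$ gives a color-$1$ Hamiltonian cycle by Dirac, and that cycle contains $n_1P_3$. The same fix handles step (iii), where $N\ge 2k$ gives $\delta\ge N-k\ge N/2$.
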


\begin{theorem}\label{thm:main3}
\[
R_k(2P_3)=
\begin{cases}
2k+2, & \text{if}\ k\ge 9\ \text{and}\ k\ \text{is odd},\\[4pt]
2k+1, & \text{if}\ k\ge 14\ \text{and}\ k\ \text{is even}.
\end{cases}
\]
\end{theorem}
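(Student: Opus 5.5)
The lower bounds come straight from Theorem~\ref{thm:main1}. With $n_1=\cdots=n_k=2$ it gives $R_k(2P_3)\ge 2\max\{\lceil k/2\rceil,2\}+2k-k+1$. For $k\ge 4$ this equals $2\lceil k/2\rceil+k+1$, which is $2k+2$ for odd $k$ and $2k+1$ for even $k$. So the work is the upper bound. Set $N=2k+2$ if $k$ is odd and $N=2k+1$ if $k$ is even. I will show that every $k$-edge-coloring of $K_N$ has a monochromatic $2P_3$. Suppose not, and let $G_1,\dots,G_k$ be the colour classes, each $2P_3$-free.

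\textbf{Step 1: structure of $2P_3$-free graphs.} Let $G$ be $2P_3$-free. If $G$ has no cherry it is a matching. Otherwise I claim one of two things holds. Either (A) there is a vertex $v$ such that $G-v$ is a matching; call $v$ the \emph{centre}. Or (B) all vertices of degree at least $2$, and all components with at least $3$ vertices, lie inside a set $S$ with $|S|\le 5$, and $G-S$ is a matching with no edges to $S$. To prove this, take a cherry $xyz$. Every other cherry must meet $\{x,y,z\}$. A short case analysis on how cherries meet $xyz$ should leave only a common vertex (type A) or confinement to at most $5$ vertices (type B, for example $K_4$, $K_5$ or a triangle with pendant edges). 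This gives the degree facts I need. In type A every non-centre vertex has degree at most $2$. In type B every vertex outside $S$ has degree at most $1$, and every vertex of $S$ has degree at most $4$.

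\textbf{Step 2: degree accounting.} Each vertex has total degree $N-1\ge 2k$. Call $u$ \emph{special} in colour $i$ if $u$ is the centre of $G_i$, or $u\in S$ for a type-B colour $G_i$. If $u$ is not special in colour $i$, then $\deg_{G_i}(u)\le 2$. Consider a vertex that is special in no colour. Its degree is at most $2k$, so for odd $k$ (where $N-1=2k+1$) it cannot exist. For even $k$ it must have degree exactly $2$ in every colour. That forces every colour to be type A, since in type B a non-special vertex has degree at most $1$. It also forces the vertex to be matched in $G_i-v_i$, where $v_i$ is the centre of $G_i$.

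Now count. There are at most $k$ centres plus at most $5$ vertices per type-B colour, so the special vertices are few compared with $N$ only when most colours are type A. I will do a weighted count of the deficits $\sum_i\bigl(2-\deg_{G_i}(u)\bigr)$ against the surplus that special vertices can supply. A type-A centre adds at most $N-3$ extra incidences, but each edge at a centre $v_i$ uses up one unit of "degree $2$" at its other endpoint. The aim is a global inequality that fails once $k$ is large.

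\textbf{Step 3: parity and the main obstacle.} The tight case is when almost every vertex has degree exactly $2$ in every colour. Then each $G_i$ is a spanning star at $v_i$ together with a perfect matching of $G_i-v_i$, so $N-1$ must be even. This is exactly where odd and even $k$ differ. For odd $k$ the extra vertex ($N=2k+2$) creates an unavoidable deficit. For even $k$ one derives the contradiction by comparing $\sum_i|E(G_i)|\le k\bigl((N-1)+\lfloor (N-2)/2\rfloor\bigr)$ with $\binom N2$. The edges among the centres are double-counted and must be subtracted, which uses the fact that the centres are distinct. I expect this last counting step to be the hardest part, together with ruling out configurations that mix a few type-B colours (or colours that are plain matchings) with type-A colours. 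Each type-B colour loses roughly $N$ edges against the budget. For large $k$ ($k\ge 9$ odd, $k\ge 14$ even) the loss should outweigh the slack, and the thresholds come from where this slack inequality starts to hold. I would first write the exact edge-count inequality as a function of the number $t$ of type-B colours and the number of colliding centres, and check that it fails in that range.
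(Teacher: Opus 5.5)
Your lower bound is correct, but the upper bound is not proved. Steps 2 and 3 announce a counting argument and then defer the decisive inequality, and some of the concrete ingredients you do state are wrong.

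First, the dichotomy of Step 1 is false. Take the \emph{net}: a triangle $abc$ with one pendant edge at each of $a,b,c$. It is $2P_3$-free. A copy of $2P_3$ would use all six vertices, and each pendant vertex forces its neighbour to be the middle of its cherry; there are three such neighbours but only two cherries. Yet deleting any vertex leaves a $P_4$ or a triangle, and its nontrivial component has six vertices, so the net is neither type (A) nor type (B). This is repairable. A connected $2P_3$-free graph with no vertex whose deletion leaves a matching has $\Delta\le 4$ by Lemma~\ref{degree5}, and a longest-path analysis shows it has at most five vertices or is the net. So your degree facts survive with $|S|\le 6$.

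Second, the one explicit inequality you propose for even $k$ cannot work. With $N=2k+1$ your bound gives $k(3k-1)>2k^2+k=\binom{N}{2}$. Subtracting the $\binom{k}{2}$ centre--centre overlaps still leaves $(5k^2-k)/2>2k^2+k$. Nor does ``$N-1$ must be even'' help, since $N-1=2k$ is even exactly when $k$ is even.

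What is missing is a per-vertex excess count. Let $a,b,m$ be the numbers of type-A, type-B and matching colours, let $C$ be the set of centres, and put $x=k-a=b+m$. For $u\notin C$,
\[
N-1=\sum_i d_i(u)\le 2a+m+b+\sum_{j\ \text{type B}}(d_j(u)-1)^+ .
\]
Each type-B colour $j$ has total excess $\sum_u (d_j(u)-1)^+\le 2e(H_j)-|H_j|\le 15$, where $H_j$ is its nontrivial component. Summing over the at least $N-a$ vertices outside $C$ gives two cases:
\begin{itemize}
\item For odd $k$: $(k+2+x)(x+1)\le 15x$, which is impossible for $k\ge 9$.
\item For even $k$: $(k+1+x)x\le 15x$, which forces $x=0$ when $k\ge 14$.
\end{itemize}
In the even case, every colour is then type A, so every non-centre has degree exactly $2$ in every colour and hence is joined to $c_i$ in colour $i$. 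So the centres are distinct. Since an edge $uc_j$ has colour $j$, the $k+1$ non-centres must be perfectly matched among themselves in each colour. This is impossible because $k+1$ is odd.

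With these additions your approach becomes a self-contained alternative. The paper avoids global counting altogether. Lemmas~\ref{eventheorem} and~\ref{oddtheorem} induct on the number $\ell$ of non-matching colours. If the densest such colour has a vertex of degree at least $5$, deleting it turns that colour into a matching (Lemma~\ref{degree5}), and the induction hypothesis applies on a complete graph with one vertex fewer. Otherwise $\Delta\le 4$, and Lemma~\ref{edgedegree} applies to a component with at least $11$ edges.
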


To prove Theorem~\ref{thm:main2}, we actually establish the following stronger statement. Here, $C_n$ denotes the cycle on $n$ vertices, and $K_{1,s}$ denotes the star with $s$ edges.

\begin{theorem}\label{thm:main4}
  For positive integers $k,s,n, n_2, \ldots, n_k$ with $s\ge 2$ and $n\ge 2\sum_{i=2}^{k}(n_i+1)s-6k+8$, we have 
  \[R(C_n, n_2K_{1,s}, \ldots, n_kK_{1,s})=n+\sum_{i=2}^{k} n_i-k+1.\]
\end{theorem}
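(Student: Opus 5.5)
We prove the two inequalities separately. Write $N=n+\sum_{i=2}^{k}n_i-k+1$.

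\emph{Lower bound.} We use the standard Cockayne--Lorimer-type construction. Take $K_{N-1}$ and partition its vertices into $A_1$ with $|A_1|=n-1$ and sets $A_i$ with $|A_i|=n_i-1$ for $i\in[2,k]$. Every edge with both ends in $A_1$ gets color $1$. For $i\ge 2$, every edge with its smaller-indexed endpoint in $A_i$ gets color $i$; when $|A_i|=0$ we simply skip that class.
\begin{itemize}
\item Color $1$ spans only $n-1$ vertices, so it contains no $C_n$.
\item For $i\ge 2$, every edge of color $i$ meets $A_i$. Since $s\ge 2$, each star $K_{1,s}$ of color $i$ must use a vertex of $A_i$: its center lies in $A_i$, or else all of its leaves do.
\end{itemize}
Hence $n_i$ disjoint such stars would need at least $n_i$ vertices of $A_i$, but $|A_i|=n_i-1$. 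This gives $R\ge N$.

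\emph{Upper bound.} We induct on $\sum_{i\ge 2}n_i$. Fix a $k$-coloring of $K_N$ with no $n_iK_{1,s}$ in color $i$ for any $i\ge2$. The base case is $n_i=1$ for all $i\ge 2$. Then for $i\ge 2$, color $i$ has maximum degree below $s$. So the color-$1$ graph has minimum degree at least $N-1-(k-1)(s-1)$. Since $N=n$ in this case and $n$ is large relative to $ks$, Dirac's theorem gives a Hamiltonian cycle, i.e.\ a $C_n$ in color 1.

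For the inductive step, suppose some color $i\ge 2$ contains a star $K_{1,s}$. Delete its $s+1$ vertices, which reduces $n_i$ by one and $N$ by one. That deletes too many vertices, so we must instead use a more careful counting argument. Let $M_i$ be a maximum family of disjoint color-$i$ stars, with $|M_i|\le n_i-1$. Let $S$ be the union of the vertices of all $M_i$, so $|S|\le\sum_{i\ge2}(n_i-1)(s+1)$. Outside $S$, each color $i\ge 2$ has maximum degree below $s$ within $V\setminus S$. Moreover, each vertex $v\notin S$ has few color-$i$ neighbours inside $S$, or else we could augment or swap stars.

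Thus in $G_1[V\setminus S]$ the minimum degree is close to $|V\setminus S|$, so $V\setminus S$ is nearly complete in color 1. The main work is to absorb enough vertices of $S$ into a color-1 cycle of length exactly $n$. We need $n-|V\setminus S|$ extra vertices, but only $\sum(n_i-1)$ spare vertices exist beyond the forced count.

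Here we use the structure: in each star of $M_i$, at most one vertex (its center) can be "bad", meaning it has huge color-$i$ degree. The leaves, by maximality and exchange arguments, have small non-color-1 degree to $V\setminus S$. So we discard one vertex per star, namely the centers. The remaining vertices $V'=V\setminus\{\text{centers}\}$ number at least $N-\sum(n_i-1)=n$. We aim to show that the color-1 graph on $V'$ has every vertex with color-1 degree at least $|V'|/2$ into a large clique-like core. A Pósa/Dirac-type argument, or Bondy's pancyclicity theorem applied to a Hamiltonian graph with many edges, then gives a cycle of length exactly $n$.

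The main obstacle is bounding the non-color-1 degrees of leaves. A leaf $u$ of a star in $M_i$ may have many color-$j$ neighbours for some $j\ne i$. Such neighbours must be controlled via maximality of $M_j$: if $u$ had at least $s+|S|$ color-$j$ neighbours outside $S$, then $u$ could not be in $M_i$ without contradicting the choice of $(M_2,\dots,M_k)$ maximizing $\sum|M_i|$. In that case we would replace the star containing $u$ by a new color-$j$ star, and this reshuffling should be justified by choosing the families to maximize total size and then lexicographically. The hypothesis $n\ge 2\sum(n_i+1)s-6k+8$ is exactly what guarantees that the degree loss, of order $\sum(n_i+1)s$, stays below $n/2$, which is what makes Dirac and pancyclicity applicable.
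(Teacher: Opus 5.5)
Your lower bound is the paper's construction. Your upper-bound skeleton is also the paper's: a maximum packing $M_i$ of colour-$i$ stars for each $i\ge 2$, deletion of one vertex per star (so $|V'|\ge n$), a colour-$1$ minimum degree of at least $|V'|/2$ on $V'$, and Bondy's theorem. The gap is in \emph{which} vertex you delete. It is false that only the centre of a star can be bad. Let the colour-$i$ graph consist of a vertex $u_1$ joined to all other vertices, together with a vertex $u_0$ joined to $u_2,\dots,u_s$. Every colour-$i$ star then contains $u_1$, so the star with centre $u_0$ and leaves $u_1,\dots,u_s$ is by itself a maximum packing. Deleting its centre leaves $u_1$ in $V'$ with colour-$1$ degree $0$, so the minimum-degree condition you need on $V'$ fails outright.

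The missing ingredient is the paper's exchange claim. Write $X_i=V(M_i)$; then in each star of $M_i$ at most one vertex has $2s-1$ or more colour-$i$ neighbours in $V\setminus X_i$. If two leaves $u_1,u_2$ did, give $u_1$ a star on $s$ such neighbours, and give $u_2$ a star on the centre $u_0$ plus $s-1$ of its outside neighbours not used by $u_1$. If the centre $u_1$ and a leaf $u_2$ did, let $u_1$ trade $u_2$ for an outside neighbour $w$, and give $u_2$ a star on $s$ outside neighbours other than $w$ (possible since $2s-2\ge s$). Either way the packing grows. You must delete this bad vertex (any vertex if there is none), not the centre.

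Conversely, your ``main obstacle'' is not one, and the remedy you sketch would not work anyway. Replacing a colour-$i$ star by a colour-$j$ star leaves $\sum_i|M_i|$ unchanged, so maximizing that sum, even with a lexicographic tie-break, rules nothing out. Instead, choose each $M_j$ as a maximum packing for colour $j$ alone; packings of different colours may overlap, and $V'$ is $V$ minus the deleted vertices of all colours. Then every $u\notin X_j$ has at most $s-1$ colour-$j$ neighbours in $V\setminus X_j$, whether or not it lies in some other $X_i$. Your claim that vertices outside $S$ have few colour-$i$ neighbours in $S$ is neither justified by maximality nor needed: the trivial bound $|X_j\cap V'|\le (n_j-1)s$ is exactly what the hypothesis on $n$ pays for.

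Concretely, each $u\in V'$ has at most $(2s-2)+(|X_j\cap V'|-1)$ colour-$j$ neighbours in $V'$ if $u\in X_j$, and at most $(s-1)+|X_j\cap V'|$ otherwise. In either case this is at most $(n_j+1)s-3$. Hence $d_1(u)\ge |V'|-1-\sum_{j=2}^{k}\bigl((n_j+1)s-3\bigr)\ge |V'|/2$ by the hypothesis, with equality only if $|V'|=n$; in that case even Bondy's exceptional graph $K_{n/2,n/2}$ contains $C_n$. This computation is the one place the hypothesis is used, and it is missing from your proposal. The induction on $\sum_i n_i$ you start with plays no role and can be dropped.
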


Moreover, Irving~\cite{Irving1974} proved that $R_k(P_3)=k+1+(k \bmod 2)$. We also extend this result as follows.

\begin{theorem}\label{thm:main5}
\[
R_k(2P_3, P_3, \ldots, P_3)=
\begin{cases}
6, & \text{if}\ k=2,\\[2pt]
k+3, & \text{if}\ k\ \text{is odd},\\[2pt]
k+2, & \text{if}\ k\ \text{is even and}\ k\ge 4.
\end{cases}
\]
\[
R_k(3P_3, P_3, \ldots, P_3)=
\begin{cases}
9, & \text{if}\ k\in\{2,3,4\},\\[2pt]
k+4, & \text{if}\ k\ \text{is odd and}\ k\ge 5,\\[2pt]
11, & \text{if}\ k=6,\\[2pt]
k+3, & \text{if}\ k\ \text{is even and}\ k\ge 8.
\end{cases}
\]
\end{theorem}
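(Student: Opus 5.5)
The plan is to prove matching lower and upper bounds for each case. The key observation is that for $i\ge 2$, a color class with no $P_3$ is a \emph{matching}. So in any coloring of $K_N$ avoiding a color-$i$ $P_3$ for every $i\ge 2$, the colors $2,\ldots,k$ together form a graph of maximum degree at most $k-1$. It is moreover the union of $k-1$ matchings, so it is $(k-1)$-edge-colorable. Consequently the color-$1$ graph $G$ satisfies $\delta(G)\ge N-k$. In addition, the complement $\overline{G}$ must have chromatic index at most $k-1$.

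The problem therefore reduces to two questions about a graph $G$ on $N$ vertices:
\begin{enumerate}[(a)]
\item for which $N$ does a $2P_3$-free (respectively $3P_3$-free) graph $G$ exist with $\chi'(\overline{G})\le k-1$;
\item for which $N$ does every such $G$ contain the forbidden path-matching.
\end{enumerate}

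For the upper bounds I would first classify $tP_3$-free graphs with large minimum degree, for $t=2,3$. A $2P_3$-free graph with $\delta\ge 2$ on $N\ge 6$ vertices should not exist. Indeed, take a longest path or a cycle and use the minimum degree to find a second disjoint cherry; the only survivors are small dense graphs such as $K_4$ or $K_5$-like configurations. Similarly, a $3P_3$-free graph with $\delta\ge 3$ (or $\delta\ge 2$ plus extra degree) has at most about $8$ vertices.

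Parity enters through the matchings. If $N$ is odd, each of the $k-1$ matchings misses at least one vertex. Summing, the color-$1$ degrees exceed $N-k$ at no fewer than $k-1$ vertex-incidences, and these surplus degrees push $G$ over the threshold. This is exactly where the odd/even distinction ($k+3$ versus $k+2$, and $k+4$ versus $k+3$) should come from. The case where $k$ is even and $N=k+2$ is handled analogously with $\delta(G)\ge 2$, combined with the impossibility of $\overline{G}$ being class~$1$ when $\overline{G}$ is regular of odd order.

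For the lower bounds I would construct explicit colorings on $N-1$ vertices. Choose $G$ to be a $tP_3$-free graph, for example a star, a small clique $K_{3t-1}$, or a clique plus pendant structure. Then decompose $\overline{G}$ into $k-1$ matchings using a $1$-factorization of $K_{2m}$ (for even order) or a near-$1$-factorization of $K_{2m+1}$. For $K_{2m+1}$, the standard construction has each color missing exactly one vertex, and those missing vertices can be absorbed into the small $G$.

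The sporadic values, namely $6$ for $k=2$, $9$ for $k\in\{2,3,4\}$, and $11$ for $k=6$, will be checked directly. The case $k=2$ follows from the two-color formula of Faudree and Schelp, and $k=3$ from Scobee's formula. For $k=4$ and $k=6$, small explicit colorings give the lower bounds. The upper bounds come from the same degree argument, now with $N$ close to $3t$, where the clique $K_{3t-1}$ is the extremal obstruction.

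I expect the main obstacle to be the structural lemma for the upper bounds: showing that a $3P_3$-free graph whose complement is $(k-1)$-edge-colorable, on $k+3$ (even $k$) or $k+4$ (odd $k$) vertices, cannot exist. This requires a careful case analysis on the components of $G$, namely whether $G$ has a dominating vertex, or a component with at most $8$ vertices. It must also handle the exceptional $k=6$ case, where $K_{10}$ minus a suitable $3P_3$-free graph apparently still decomposes into $5$ matchings. I would first try to prove that if $G$ is $3P_3$-free with $\delta(G)\ge N-k$, then either $G$ has a vertex covering all but a matching's worth of edges or $N\le 10$. The remaining small cases would then be settled by hand via chromatic-index (Vizing class) arguments.
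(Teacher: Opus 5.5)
Your skeleton is sound: colors $2,\dots,k$ are matchings, so $\delta(G_1)\ge N-k$; the lower bounds come from $1$-factorizations as in Theorem~\ref{thm:main1}; and Faudree--Schelp and Scobee handle $k\le 3$. The gap is that the upper bounds are not proved, and the structural statements you offer to carry them are false as stated.

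\textbf{The structural claims fail without a parity hypothesis.}
\begin{itemize}
\item A vertex joined to every vertex of a perfect matching on $2m$ vertices (a friendship graph) has $\delta=2$, is $2P_3$-free, and has any odd order $2m+1\ge 7$. It is exactly the color-$1$ class of the lower-bound coloring on $k+2$ vertices for odd $k$.
\item Likewise $\overline{K_2}\vee mK_2$ has $\delta=3$ and is $3P_3$-free, since every cherry meets one of the two apexes. It has $2m+2$ vertices, and deleting any single vertex never leaves a matching. So both ``at most about $8$ vertices'' and your proposed key lemma fail for even $N$.
\end{itemize}
What rescues the theorem is that the threshold values have fixed parity. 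For $2P_3$, $N\in\{6,k+2,k+3\}$ is always even; for $3P_3$, $N\in\{9,11,k+3,k+4\}$ is always odd.

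\textbf{Your surplus-degree account of parity points the wrong way.} At every $2P_3$ threshold $N$ is even, so no surplus is forced, yet $2P_3$ is still forced. At the odd order $k+2$ with $k$ odd, the friendship graph absorbs the entire forced surplus $k-1$ at its center and stays $2P_3$-free. Parity must instead be used structurally:
\begin{itemize}
\item For $2P_3$: if $v$ has degree at least $5$, or is a cut vertex with $G_1-v$ cherry-free, then $\delta(G_1)\ge 2$ forces $G_1-v$ to be a perfect matching on the odd number $N-1$ of vertices, which is impossible.
\item For $3P_3$, when a longest color-$1$ cycle is a $C_6$: the set $S$ of off-cycle vertices has odd size $N-6$. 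Hence the cherry-free graph $G_1[S]$ has an isolated vertex, whose cycle-neighborhood is $\{x_1,x_3,x_5\}$ or $\{x_2,x_4,x_6\}$. The paper shows this yields either $3P_3$ or an $8$-cycle.
\end{itemize}

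\textbf{The $3P_3$ upper bound, which you flag as the main obstacle, is only a plan.} The paper splits on $\kappa(G_1)$:
\begin{itemize}
\item $\kappa\ge 2$: Lemma~\ref{lem:Dirac} gives a cycle of length at least $6$, and the cases $\ell=6,7,8$ are handled directly.
\item $\kappa=1$: if the side of the cut vertex (together with the cut vertex) has at least $8$ vertices, it contains $2P_3$ by Lemma~\ref{lem:edge2P3}. Otherwise, longest paths from the cut vertex on the two sides each have length at least $4$ and glue into a $P_9$.
\item $\kappa=0$: the larger component contains $2P_3$, by Lemma~\ref{lem:edge2P3} or, if it has $6$ vertices, by a Hamiltonian $C_6$ from Lemma~\ref{lem:Dirac}.
\end{itemize}

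\textbf{The $k=6$ lower bound is missing.} The obstruction is not $K_{3t-1}=K_8$ but $K_5\cup K_5$ in color $1$. Its complement $K_{5,5}$ splits into the five perfect matchings $M_t=\{x_iy_{i+t}\}$, giving $R_6(3P_3,P_3,\dots,P_3)\ge 11$.

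Without the correct parity-based structural arguments, the $3P_3$ case analysis, and this construction, the proposal does not establish the theorem.
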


It is worth noting that, when $k=6$, we have $R_6(3P_3,P_3,\ldots,P_3)=11$. This indicates that there are other possible lower-bound constructions, and hence the lower bound established in Theorem~\ref{thm:main1} is not tight in general and can still be improved.

Ramsey theory admits numerous variations, one of which restricts the allowable colorings and thereby often simplifies the multicolor setting. 
An edge-coloring $\tau:E(G)\to[k]$ is called a \emph{Gallai coloring} if it contains no rainbow triangle, that is, no triangle whose three edges receive pairwise distinct colors. 
This notion originates from work of Gallai~\cite{Gallai1967,Maffray2001} and is supported by a fundamental structural theorem.

\begin{theorem}[Gallai, 1967]\label{Gallai}
Let $(G,\tau)$ be a Gallai $k$-colored complete graph on at least two vertices. 
Then the vertex set of $G$ can be partitioned into nonempty parts $V_1,\ldots,V_p$ with $p\ge2$ such that at most two colors appear on the edges between different parts, and for each pair $(V_i,V_j)$, all edges between $V_i$ and $V_j$ receive the same color.
\end{theorem}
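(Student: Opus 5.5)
We argue by induction on $|V(G)|$. Together with the case analysis below, the induction proves the stronger statement that a partition with $p\ge 2$ parts, pairs monochromatic, and at most two colors between parts can always be chosen so that, whenever exactly two colors occur between the parts, the \emph{reduced graph} (the complete graph on $\{V_1,\ldots,V_p\}$ colored by the color between the parts) is connected in each of its two colors. The base case $|V(G)|=2$ is immediate: take both vertices as singleton parts.

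First we dispose of the degenerate case by a connectivity test. Fix a color $c$ and let $H_c$ be the spanning subgraph of $G$ consisting of all edges \emph{not} of color $c$. If $H_c$ is disconnected for some $c$, take $V_1,\ldots,V_p$ to be its components. Then $p\ge 2$, and every edge between different components has color $c$. So only one color appears between parts and we are done. Hence we may assume that $H_c$ is connected for every color $c$.

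For the inductive step, pick a vertex $v$ and apply the induction hypothesis to $G-v$. This gives a partition $W_1,\ldots,W_q$ with $q\ge 2$, pairs monochromatic, and at most two colors, say red and blue, between parts. By the previous paragraph we may assume that both red and blue occur and that the reduced graph $R$ on the $W_j$ is connected in red and in blue. (If $G-v$ itself has a one-colored partition, we rerun the argument with $H_c$ for $G-v$ and merge parts, which leads to the same situation.) Now we study how $v$ attaches. Call $v$ \emph{uniform} on $W_j$ if all edges from $v$ to $W_j$ have one color.

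The key observation uses the absence of rainbow triangles. Take $x\in W_i$ and $y\in W_j$ with $i\neq j$. If the edge $xy$ is red, then $vx$ and $vy$ cannot have two distinct colors that both differ from red. Since $R$ is connected in red and in blue, propagating this constraint along red and blue paths of $R$ should show the following: either the colors from $v$ to $V(G)\setminus\{v\}$ lie in $\{\text{red},\text{blue}\}$, or $v$ is uniform of one common color on a union of parts that forms a module. In the second case we merge $v$ into a suitable part, or merge parts together with $v$, which yields a valid partition of $G$.

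In the remaining case, all edges at $v$ are red or blue. Then we refine. Split each $W_j$ according to the colors $v$ sends into it, and add $\{v\}$ as a new part. We then check that the coarsest partition compatible with the reduced structure still has monochromatic pairs. Parts on which $v$ is non-uniform are replaced by their subdivisions into maximal modules. This uses the inductive hypothesis applied to $G[W_j\cup\{v\}]$, which is smaller than $G$ whenever $q\ge 2$.

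The main obstacle is the propagation step. We must show rigorously that, once $v$ sends a third color into some $W_i$, the rainbow-triangle constraint together with the red and blue connectivity of $R$ forces a module structure, rather than leaving a configuration where no valid merge exists. If the direct propagation argument becomes messy, the fallback is the modular-decomposition route. Take the maximal proper modules of $G$. In the case where every $H_c$ is connected, they partition $V(G)$ into at least two parts with monochromatic pairs. It then remains to show that a prime Gallai-colored quotient uses only two colors. For that we would suppose three colors $c_1,c_2,c_3$ occur in the prime quotient and use a shortest $c_1$-path between a $c_2$-edge and a $c_3$-edge to produce either a rainbow triangle or a nontrivial module, a contradiction.
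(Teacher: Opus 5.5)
The paper does not prove this statement; it is quoted from Gallai. So your proposal has to stand on its own, and as written it does not. Both load-bearing steps of your inductive route are either unproved or wrong.

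\textbf{The propagation step.} It is only asserted (``should show''), and the dichotomy you aim for is imprecise. When $R$ is red/blue and connected in both colors, the rainbow-free condition actually gives the following.

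Two edges at $v$ whose colors lie outside $\{\mathrm{red},\mathrm{blue}\}$ and which go into different parts must have the same color, because the third side of the triangle is red or blue.

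If only one part $W_i$ receives such colors, then $v$ sends $\tau(W_iW_j)$ to all of $W_j$ for every $j\neq i$. Hence $W_i\cup\{v\}$ is a module.

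If the set $I$ of such parts has $|I|\ge 2$, then $\{v\}\cup\bigcup_{i\in I}W_i$ is a module. When $I$ contains every part, the connectivity of $R$ in both colors forces $v$ to be monochromatic to $G-v$.

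\textbf{The refinement step.} This is where the argument genuinely fails. Splitting $W_j$ according to the colors $v$ sends into it does not give monochromatic pairs. From $vx$ red and $vx'$ blue you only learn $xx'\in\{\mathrm{red},\mathrm{blue}\}$, and this can vary from pair to pair.

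Induction on $G[W_j\cup\{v\}]$ does not help either. The partition it returns may join its parts by colors other than red and blue, since the colors inside $W_j$ are unrestricted, so it cannot be glued into a partition of $G$.

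The missing idea is to cut $W_j$ into the connected components of the subgraph of $G[W_j]$ formed by the edges whose color is neither red nor blue. Each such component $C$ is a module of $G$, for two reasons. First, every vertex outside $C$ is joined to $C$ by red/blue edges only: for vertices of $W_j$ this follows from the maximality of $C$, and for $v$ it is your case assumption that $v$ sends only red and blue. Second, a vertex joined by red/blue edges to both ends of an edge of a third color must use the same color on both, otherwise that triangle is rainbow. Two disjoint modules are joined monochromatically, and edges between different components are red or blue by construction.

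\textbf{The one-colored case.} The case where $G-v$ only has a one-colored partition is not ``the same situation'', because there is no two-colored reduced graph to propagate along. It is handled as follows. Suppose all edges between the parts of $G-v$ have color $a$ and $H_a(G)$ is connected. Then $v$ sends a non-$a$ edge into every part. All non-$a$ edges at $v$ share one color $b$: two such edges into different parts span a triangle whose third side has color $a$, and two into the same part can be compared through one into another part. After that, the component refinement applies with $\{a,b\}$ in place of $\{\mathrm{red},\mathrm{blue}\}$. With these repairs your inductive route does go through.

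\textbf{The fallback.} The reduction itself is legitimate. Two overlapping maximal proper modules $M_1,M_2$ would force $M_1\cup M_2=V(G)$, and would force all edges between any two of $M_1\setminus M_2$, $M_1\cap M_2$, $M_2\setminus M_1$ to have a single color $c$, making $H_c$ disconnected.

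However, the fallback pushes the whole content of the theorem into the lemma that a prime Gallai-colored quotient uses at most two colors, and your shortest-path sketch does not prove that lemma. The shortest $c_1$-path between a $c_2$-edge and a $c_3$-edge can have length zero, i.e.\ the two edges share a vertex, which the Gallai condition allows. You give no mechanism that produces a module in general.

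A workable start is that every component of a color class is a module: propagate the rainbow-free condition along edges of that color. So in a prime quotient every used color class is connected and spanning. You would still need a genuine argument that at most two color classes can be connected and spanning.
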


Motivated by this restriction, the \emph{Gallai--Ramsey number} $GR(G_1,\ldots,G_k)$ is defined as the smallest integer $N$ such that every Gallai coloring $\tau:E(K_N)\to[k]$ contains a monochromatic copy of $G_i$ in color $i$ for some $i\in[k]$. 
When $G_1=\cdots=G_k=G$, we simply write $GR_k(G)$.

We now consider Gallai--Ramsey numbers for disjoint unions of cherries. Owing to the structural restrictions imposed by Gallai colorings, these numbers admit a simple uniform formula, in contrast to the situation for general multicolor colorings.

Wu, Magnant, Nowbandegani, and Xia~\cite{Wu2019} studied the Gallai--Ramsey number of cherries $nP_3$. They provided upper and lower bounds for $GR(n_1P_3, \ldots, n_kP_3)$ and conjectured that the given lower bound is the exact value.

\begin{theorem}[Wu, Magnant, Nowbandegani, and Xia, 2019]\label{Wuconjecture1}
  Let $k, n_1, \ldots, n_k$ be positive integers and $n = \max\{n_1, \ldots, n_k\}$. We have 
	\[2n + \sum_{i=1}^{k} n_i - k + 1 \le GR(n_1P_3, \ldots, n_kP_3) \le (k+2)(n-1) + \frac{9n-3}{2}\ln\left(\frac{3n}{2}-1\right) + 1\,.\]
\end{theorem}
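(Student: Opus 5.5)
The lower bound is a construction. The upper bound I would try by removing cherries one at a time and using the Gallai structure of Theorem~\ref{Gallai} at each step. By symmetry assume $n=n_1$.

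\textbf{Lower bound.} Take a set $A$ of $3n_1-1$ vertices and color every edge inside $A$ with color $1$. Then add sets $B_2,\ldots,B_k$ with $|B_i|=n_i-1$. Every edge with its later endpoint (in the order $A,B_2,\ldots,B_k$) in $B_i$ gets color $i$; this includes the edges inside $B_i$. The total number of vertices is
\[
3n_1-1+\sum_{i\ge2}(n_i-1)=2n+\sum_i n_i-k .
\]
\begin{enumerate}[(i)]
\item There is no rainbow triangle. The colors of a triangle are determined by the largest part index among its vertices. At least two of its edges contain a vertex of that part, so they share a color.
\item There is no color-$1$ copy of $n_1P_3$, because color $1$ lives on only $3n_1-1$ vertices.
\item For $i\ge 2$, every color-$i$ edge meets $B_i$. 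So every color-$i$ cherry uses a vertex of $B_i$, and there are fewer than $n_i$ disjoint ones.
\end{enumerate}
This gives the lower bound. When some $n_i=1$, $B_i$ is empty and nothing changes.

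\textbf{Upper bound: setup.} I would induct on $\sum_i n_i$, removing one monochromatic cherry at a time. Let $N$ be the right-hand side and fix a Gallai coloring of $K_N$. Apply Theorem~\ref{Gallai} and choose the reduced graph with the fewest parts. Then either $p=2$, or the reduced graph is a genuinely two-colored (say red/blue) complete graph on $p\ge 4$ vertices.

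\textbf{Upper bound: main cases.}
\begin{itemize}
\item If some part $V_j$ is very large, the edges from $V_j$ to the rest are all one or two colors. Pairing vertices outside $V_j$ with pairs inside $V_j$ then yields many disjoint red or blue cherries almost for free. This handles one of those two colors completely, and I would recurse inside $V_j$ for the others.
\item If all parts are small, apply the two-color result of Faudree and Schelp, $R(n_1P_3,n_2P_3)=3n_1+n_2-1$. I would use it to the reduced graph blown up by the part sizes, to find the required red or blue cherries across parts.
\end{itemize}

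\textbf{Upper bound: source of the logarithm.} I expect the term $\frac{9n-3}{2}\ln(\frac{3n}{2}-1)$ to come from a greedy removal phase. At the stage where $j$ cherries are still needed in the dominant color, a counting or degree argument inside a Gallai coloring gives a monochromatic cherry in a useful color after discarding about $\frac{3n}{2j}$ vertices. Summing over $j$ gives the harmonic sum $\sum_j \frac{3n}{2j}\approx\frac{3n}{2}\ln(\frac{3n}{2})$, times a constant $3$. The linear term $(k+2)(n-1)$ pays for the $n_i-1$ deficits in every color plus slack for the two reduced-graph colors.

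\textbf{Main obstacle.} The hard step is this bookkeeping. One must show that each removal of a cherry costs at most $3$ vertices plus the amortized harmonic loss, without double-counting between the recursive calls inside parts and the cross-part two-colored structure.
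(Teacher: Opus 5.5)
Your lower bound is correct and is exactly the paper's construction (Lemma~\ref{lem:main1.2}): a color-$1$ clique on $3n_1-1$ vertices, followed by batches of $n_i-1$ vertices whose incident edges receive color $i$. Your no-rainbow-triangle and vertex-cover checks are fine.

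The upper bound, however, is not proved. There is a genuine gap.

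\textbf{The key step is missing.} You need that, when $j$ cherries are still required, a monochromatic cherry in a useful color can be found at an amortized cost of about $\frac{3n}{2j}$ discarded vertices. This is only conjectured (``I expect''), never stated as a precise lemma, let alone proved. The constant $\frac{9n-3}{2}\ln(\frac{3n}{2}-1)$ is read off from the target rather than derived.

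\textbf{The small-parts case fails as stated.} Faudree--Schelp needs a red/blue coloring of a complete graph. Inside the parts, the edges carry arbitrary colors from all $k$ colors, and the cross-part edges alone form a complete multipartite graph, not a complete graph. You could merge all non-red colors into one. A non-red cherry then spans a non-rainbow triangle and so yields a monochromatic cherry. But its color can be any non-red color containing a $P_3$; it is forced to be blue only when red and blue are the only colors containing a $P_3$. So you do not obtain a blue $n_2P_3$. Pigeonholing over colors instead gives only bounds of order $3\sum_i(n_i-1)+3$, which is weaker than $(k+2)(n-1)+O(n\log n)$ once $k$ is large.

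\textbf{The large-part case is unfinished.} ``Recursing inside $V_j$ for the other colors'' is precisely the vertex bookkeeping you identify as the main obstacle and leave undone.

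\textbf{How the paper handles the upper bound.} No logarithmic argument is needed. The paper permutes colors so that exactly colors $1,\ldots,q$ contain a $P_3$ (the corollary after Theorem~\ref{main}). Theorem~\ref{main} then gives the exact bound $GR(n_1P_3,\ldots,n_kP_3)\le 2n+\sum_i n_i-k+1$. It is proved by a minimal-counterexample argument on a Gallai partition, using Claims~\ref{3n0}--\ref{claimK4} and Lemma~\ref{proper}. The stated upper bound follows from
\[
2n+\sum_{i=1}^k n_i-k+1\le 2n+k(n-1)+1\le (k+2)(n-1)+\frac{9n-3}{2}\ln\Bigl(\frac{3n}{2}-1\Bigr)+1 .
\]
The second inequality is equivalent to $\frac{9n-3}{2}\ln(\frac{3n}{2}-1)\ge 2$, which holds for all $n\ge 2$.

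\textbf{A caveat on the statement.} For $n=1$ the right-hand side equals $1+3\ln\frac12<0$, whereas $GR=3$. So the upper bound as stated tacitly requires $n\ge 2$, and a complete proof must use this.
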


Although we have provided three different constructions yielding the lower bound for the Ramsey number $R(n_1P_3,\ldots,n_kP_3)$, only one of them is a Gallai coloring. Thus, the conjecture of Wu~\emph{et al.}\ is natural. Our last main result confirms their conjecture.

\begin{theorem}\label{Gallaitheorem}
  Let $k, n_1, \ldots, n_k$ be positive integers and $n = \max\{n_1, \ldots, n_k\}$. We have 
	\[GR(n_1P_3, \ldots, n_kP_3) = 2n + \sum_{i=1}^{k} n_i - k + 1\,.\]
\end{theorem}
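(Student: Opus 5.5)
The lower bound is already given by Theorem~\ref{Wuconjecture1}. It remains to show that every Gallai $k$-coloring of $K_N$ with $N=2n+\sum_{i=1}^k n_i-k+1$ contains a monochromatic $n_iP_3$ in color $i$ for some $i$. I will argue by induction on $\sum_i n_i$, with $k$ arbitrary.

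The base case is $n_i=1$ for all $i$. Then $N=k+3$. Take a vertex $v$; it has $k+2$ incident edges, so by pigeonhole two of them share a color $i$, giving a cherry in color $i$. Since $N\ge k+3$ this is already enough (an $n_iP_3$ with $n_i=1$ is just one $P_3$).

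For the induction step, suppose a Gallai coloring of $K_N$ has no monochromatic $n_iP_3$ in color $i$ for any $i$.

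\textbf{Removing one cherry.} Choose an index $j$ with $n_j\ge 2$.
\begin{itemize}
\item If $n_j<n$, set $n'_j=n_j-1$. The new target $N'$ equals $N-1$, but deleting a cherry removes three vertices. This mismatch is the subtlety: removing a cherry in color $j$ loses $3$ vertices while lowering the bound by only $1$ (or by $3$ when $n_j=n$ is the unique maximum).
\item So the induction must be organized more cleverly. The natural move is to induct on the \emph{maximum} $n$ while keeping track of all parameters.
\end{itemize}

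\textbf{Using Gallai's structure.} Apply Theorem~\ref{Gallai} to get a reduced partition $V_1,\dots,V_p$ with $p\ge 2$, whose cross edges use only two colors, say red ($1$) and blue ($2$). Choose the partition with $p$ minimal. If $p\ge 4$, we may assume the reduced graph is a $2$-coloring of $K_p$ in which both colors are connected; if $p=2$, all edges between the two parts have one color.

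\textbf{The two-color regime.} The cross edges form a $2$-colored blow-up. Let the red and blue cross subgraphs be $H_1$ and $H_2$.
\begin{itemize}
\item Within each part $V_t$, apply induction to the induced Gallai coloring, so each part has at most $2n+\sum n_i-k$ vertices.
\item I would combine this with cherry packings across parts. In a complete bipartite-like blow-up with parts of sizes $a\ge b$, the red cross edges contain $\min\{b,\lfloor (a+b)/3\rfloor\}$ disjoint cherries.
\item Greedily use cherries from a big part to small parts, and count how many red cherries and blue cherries are forced.
\end{itemize}
Concretely, let $V_1$ be the largest part.
\begin{itemize}
\item If $|V_1|$ is large, say $|V_1|\ge N-\lfloor (\cdot)\rfloor$, then most vertices lie inside $V_1$. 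Cherries using one vertex outside $V_1$ and two inside $V_1$ (in the cross colors) allow a reduction: delete such a cherry, decrease $n_1$ or $n_2$ by one, and the vertex count drops by $3$. This is compensated because outside vertices are few.
\item Otherwise all parts are small. Then the cross graph is dense in red or blue and contains about $N/3$ disjoint monochromatic cherries; a Faudree--Schelp type count $R(n_1P_3,n_2P_3)=3n_1+n_2-1$ style argument on the reduced structure gives $n_1P_3$ in red or $n_2P_3$ in blue. This is possible because $N\ge 3\max(n_1,n_2)+\min-1$ after absorbing the other colors' $\sum_{i\ge3}(n_i-1)$ vertices.
\end{itemize}

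\textbf{Main obstacle.} The key step, and the one I expect to be hardest, is the middle case: one large part and a mixed contribution of internal colors $3,\dots,k$. There I would prove a stronger weighted statement by induction. The claim is that if $|V(G)|\ge 2n+\sum n_i-k+1$, then one can find, for each color $i$, a family of disjoint cherries. If the family for color $i$ has $m_i$ cherries, we need $m_i\ge n_i$ for some $i$. The induction step deletes a color-$i$ cherry only when color $i$ attains the maximum $n$, or when the deficit can be charged to cross edges.
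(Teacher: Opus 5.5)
There is a genuine gap: your lower bound and base case are fine, but the inductive step is never carried out. You correctly identify the obstruction. Deleting one cherry removes $3$ vertices but lowers $2n+\sum_i n_i-k+1$ by only $1$, unless that color is the unique maximum. None of your proposed remedies overcomes this.
\begin{itemize}
\item Applying ``induction within each part $V_t$'' gives nothing. Restricting to a part does not lower any $n_i$, and $|V_t|\le N-1$ holds trivially because $p\ge 2$.
\item In your ``large $V_1$'' case, deleting a cross cherry and lowering $n_1$ or $n_2$ by one has exactly the same $3$-versus-$1$ deficit. ``Compensated because outside vertices are few'' is not an argument.
\item In the ``small parts'' case, merging colors to apply Faudree--Schelp fails once some color $i\ge 3$ contains cherries. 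A cherry in the merged color spans a triangle with two equally colored edges, but that color may be $3$ rather than $2$, so you do not get $n_2$ cherries of color $2$. The merge only works when every color other than $1,2$ is a matching, which is exactly how the paper uses it for $q=2$.
\item Your final ``stronger weighted statement'' is just a restatement of the theorem.
\end{itemize}

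The missing idea is a deletion that is exactly balanced. The paper first discards colors with no monochromatic $P_3$ (they are matchings) and replaces $n$ by the maximum $n_0$ over the $q$ colors that do contain a cherry. The cases $q=1$ (disjoint triangles) and $q=2$ (Faudree--Schelp plus the triangle argument) are then immediate. For $q\ge 3$ it takes a minimal counterexample, minimizing $q$, then $n_0$, then $\sum n_i$. It deletes a $K_4$ containing both a red and a blue cherry, where red and blue are the two colors of the reduced graph. This removes $4$ vertices and lowers the target by exactly $4$ via $(n_0,n_1,n_2)\mapsto(n_0-1,n_1-1,n_2-1)$. That step is legitimate only once one knows $n_i<n_0$ for all $i\in[3,q]$. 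Getting there needs three ingredients:
\begin{itemize}
\item[(i)] A lemma: two disjoint sets $A,B$ joined completely in color $i\in\{1,2\}$ with $|A|+|B|\ge 2n_0+n_i$ already contain $n_iP_3$ in color $i$. This yields $|V_p|\le 2n_0-2$ and $|R|,|B|\ge 3$, where $R$ and $B$ are the unions of the parts red-complete and blue-complete to the largest part $V_p$.
\item[(ii)] A proof that no internal color attains $n_0$. It looks at the reduced graph on the parts meeting an $(n_0-1)P_3$ of that color, which contains a monochromatic star or a properly colored even cycle.
\item[(iii)] A pigeonhole argument on a red/blue $K_{3,3}$ between $R$ and $B$. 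It yields a vertex of $B$ with two blue neighbors in $R$, or a vertex of $R$ with two red neighbors in $B$. Together with any vertex of $V_p$, this gives the desired $K_4$.
\end{itemize}
None of these ingredients appears in your proposal, so the upper bound remains unproved.
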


It is worth noting that the proof of Theorem~\ref{Gallaitheorem} is inspired by the proofs of Zhang, Song, and Chen~\cite{Zhang2022,Zhang2023}. We believe that this approach will continue to be useful in the study of Gallai--Ramsey numbers.

The remainder of the paper is organized as follows. In Section~\ref{secforest0}, we prove our results on the Ramsey numbers of disjoint unions of cherries. In Section~\ref{secforest1}, we establish the corresponding Gallai--Ramsey numbers for disjoint unions of cherries.

\section{Ramsey numbers of disjoint unions of cherries}\label{secforest0}

We divide this section into five subsections, each of which restates one of the five theorems on Ramsey numbers and provides its proof.

\subsection{Proof of Theorem~\ref{thm:main1}}

\begin{repthm1}
  For positive integers $k, n_1, \ldots, n_k$, we have 
	\[R(n_1P_3, \ldots, n_kP_3) \ge 2\max\{\left\lceil k/2 \right\rceil, n_1, \ldots, n_k\} + \sum_{i=1}^{k} n_i-k+1.\]
\end{repthm1}

To prove Theorem~\ref{thm:main1}, we need Lemma~\ref{lem:Harary}. Moreover, Theorem~\ref{thm:main1} follows immediately from Lemmas~\ref{lem:main1.1} and~\ref{lem:main1.2}.

\begin{lemma}[Harary~\cite{Harary1969}]\label{lem:Harary}
  The complete graph $K_{2n}$ is $1$-factorable.
\end{lemma}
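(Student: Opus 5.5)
We would prove this by the classical round-robin (rotational) construction. Label the vertices of $K_{2n}$ as $\infty$ together with the elements of $\mathbb{Z}_{2n-1}$, i.e.\ $0,1,\ldots,2n-2$ taken modulo $2n-1$; the case $n=1$ is trivial. For each $i\in\mathbb{Z}_{2n-1}$ we will define
\[
M_i=\{\,\infty i\,\}\cup\{\,(i-j)(i+j): j=1,\ldots,n-1\,\},
\]
and claim that $M_0,\ldots,M_{2n-2}$ is a $1$-factorization.

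The first step is to check that each $M_i$ is a perfect matching. Suppose $i\pm j\equiv i\pm' j'$ for some $j,j'\in[n-1]$ and signs $\pm,\pm'$. Since $2n-1$ is odd, $2$ is invertible modulo $2n-1$. Moreover, $1\le j,j'\le n-1$ gives $0<|j\pm j'|<2n-1$ unless the signs are the same and $j=j'$. Hence the $2n-2$ elements $i\pm j$ are pairwise distinct and all differ from $i$. Together with $\infty$ and $i$ they cover all $2n$ vertices exactly once, so $M_i$ has exactly $n$ edges.

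The second step is to show that every edge of $K_{2n}$ lies in exactly one $M_i$. The edge $\infty a$ lies in $M_a$ only. For an edge $ab$ with $a\ne b$ in $\mathbb{Z}_{2n-1}$, membership in $M_i$ means $\{a,b\}=\{i-j,i+j\}$, which forces $a+b=2i$. Since $2$ is invertible, this determines $i=(a+b)/2$ uniquely. Conversely, for this $i$ we set $j=\pm(b-a)/2$, choosing the representative with $1\le j\le n-1$; this is possible because $b-a\ne 0$ and every nonzero residue modulo $2n-1$ is $\pm j$ for a unique $j\in[n-1]$.

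For the count, we have $2n-1$ matchings with $n$ edges each, giving $n(2n-1)=\binom{2n}{2}$ edges, which is consistent with the above. The only point needing care is the use of the oddness of $2n-1$ to divide by $2$ and to obtain the uniqueness of $j$; we expect no real obstacle there.
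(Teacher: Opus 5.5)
Your round-robin construction is correct and complete: distinctness of the $2n-2$ residues $i\pm j$ makes each $M_i$ a perfect matching, and invertibility of $2$ modulo the odd number $2n-1$ gives the unique index $i=(a+b)/2$ whose matching contains a given edge $ab$. The paper does not prove this lemma itself but cites Harary, whose textbook proof is this same rotational construction (the 1-factors $\{v_iv_{2n}\}\cup\{v_{i-j}v_{i+j}\colon j\in[n-1]\}$ with subscripts taken modulo $2n-1$). You have therefore reproduced the cited argument and filled in the verification that the source leaves as routine.
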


\begin{lemma}\label{lem:main1.1}
  For positive integers $k, n_1, \ldots, n_k$, we have 
	\[R(n_1P_3, \ldots, n_kP_3) \ge 2\left\lceil k/2 \right\rceil+ \sum_{i=1}^{k} n_i-k+1.\]
\end{lemma}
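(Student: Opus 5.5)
We prove the lower bound by exhibiting a $k$-edge-coloring of $K_{N-1}$, where $N=2\lceil k/2\rceil+\sum_{i=1}^k n_i-k+1$, that contains no copy of $n_iP_3$ in color $i$ for any $i\in[k]$. Write $m=\lceil k/2\rceil$. The vertex set is split into a core $A$ with $|A|=2m$ and pairwise disjoint sets $B_1,\ldots,B_k$ with $|B_i|=n_i-1$. This gives $2m+\sum_i(n_i-1)=N-1$ vertices, as required.

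\emph{The coloring.} Fix $i\in[k]$. Every edge with both ends in $B_i$, and every edge from $B_i$ to any vertex outside $B_1\cup\cdots\cup B_{i-1}$, receives color $i$. Equivalently, an edge between $B_i$ and $B_j$ with $i<j$ gets color $i$, and every edge meeting $B_i\cup A$ with an endpoint in $B_i$ gets the smallest such index. This is the standard Cockayne--Lorimer-type nested construction.

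For the core $A$ we use Lemma~\ref{lem:Harary}: $K_{2m}$ has a $1$-factorization into $2m-1\ge k-1$ perfect matchings. Give the edges of $k-1$ of these matchings colors $1,\ldots,k-1$ (one color per matching), and give all remaining edges of $A$ color $k$. When $k$ is even we have $2m-1=k-1$, so every edge of $A$ is used by a matching. It is simplest to arrange that every color class inside $A$ is a perfect matching, possibly with color $k$ taking a leftover matching. If $k$ is odd, then $2m-1=k$ and we can give each of the $k$ colors exactly one matching. If $k$ is even, color $k$ receives no edge of $A$ at all, which is still fine.

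\emph{Verification.} Fix a color $i$. Every edge of color $i$ either meets $B_i$ or lies inside $A$, and the color-$i$ edges inside $A$ form a matching. A cherry is connected, has two edges and a center. Suppose the cherry avoids $B_i$. Then both of its edges lie in the color-$i$ matching of $A$, which is impossible because two edges sharing a vertex cannot both belong to a matching. Hence every color-$i$ cherry contains a vertex of $B_i$. Since the cherries in a copy of $n_iP_3$ are vertex-disjoint and $|B_i|=n_i-1$, no color-$i$ copy of $n_iP_3$ exists. This holds for every $i$, so the lemma follows.

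\emph{Main obstacle.} The main point to check is that the core coloring never produces a color-$i$ cherry inside $A$. This is exactly why the core size is $2\lceil k/2\rceil$: an even order is needed for Lemma~\ref{lem:Harary} to supply $2m-1\ge k-1$ matchings, enough to cover $K_{2m}$ with every color class a matching. The only other check is the arithmetic of the vertex count, which was done above.
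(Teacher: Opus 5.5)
Your proposal is correct and is essentially the paper's proof: a $1$-factorization of $K_{2\lceil k/2\rceil}$ with each perfect matching in its own color, plus blocks $B_i$ of size $n_i-1$, so that every color-$i$ cherry must use a vertex of $B_i$. The only difference is cosmetic: you give the edges between $B_i$ and $B_j$ ($i<j$) the smaller index $i$, whereas the paper's sequential construction gives them the larger index $j$, and the argument is unaffected.
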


\begin{proof}
  Let $2\ell$ denote the smallest even number greater than or equal to $k$. In other words, $2\ell=k+(k \bmod 2)$. By Lemma~\ref{lem:Harary}, the complete graph $K_{2\ell}$ is $1$-factorable: it can be decomposed into $(2\ell-1)$ perfect matchings, each containing $\ell$ edges. We assign a unique color to each perfect matching, using at most $k$ colors, and ensuring that no monochromatic $P_3$ exists. Next, we incrementally add vertices to form a larger complete graph. First, we add $n_1-1$ vertices, denoted by $X$, and connect each of these vertices using only edges of color $1$. We then add $n_2-1$ vertices, connecting them with edges of color $2$. This process continues until we add $n_k-1$ vertices, each connected by edges of color $k$. At this point, the total number of vertices in the complete graph is \[2\ell+\sum_{i=1}^{k}n_i-k\,.\]

  Since every $P_3$ of color $1$ must include at least one vertex from $X$, constructing $n_1$ disjoint $P_3$'s of color $1$ requires $|X| \ge n_1$. However, $|X|=n_1-1$, leading to a contradiction. Thus, no $n_1P_3$ of color $1$ exists in this graph. Similarly, for each $i \in [k]$, no $n_iP_3$ of color $i$ exists. Therefore, we obtain
  \[R(n_1P_3, \ldots, n_kP_3) \ge 2\left\lceil k/2 \right\rceil+ \sum_{i=1}^{k} n_i-k+1.\qedhere\]
\end{proof}

\begin{lemma}\label{lem:main1.2}
  For positive integers $k, n_1, \ldots, n_k$, we have
  \[
    R(n_1P_3, \ldots, n_kP_3) \ge GR(n_1P_3, \ldots, n_kP_3) \ge 2\max\{n_1, \ldots, n_k\} + \sum_{i=1}^{k} n_i-k+1.
  \]
\end{lemma}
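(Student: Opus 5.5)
The first inequality is immediate: a Gallai coloring is a particular $k$-edge-coloring, so any $N$ that forces the required monochromatic subgraph in every $k$-coloring also forces it in every Gallai coloring. Hence $R(n_1P_3,\ldots,n_kP_3)\ge GR(n_1P_3,\ldots,n_kP_3)$. For the second inequality I will build a Gallai coloring of $K_{N-1}$, with $N=2n+\sum_{i=1}^k n_i-k+1$, that contains no $n_iP_3$ in color $i$ for any $i$. This is the Cockayne--Lorimer-type construction, with the base clique enlarged to account for the three vertices of a cherry.

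By symmetry assume $n=n_1$. The base is a clique $A$ on $3n_1-1$ vertices colored entirely with color $1$. Since $3n_1-1<3n_1$, $A$ cannot hold $n_1$ vertex-disjoint cherries. Next, add sets $X_2,\ldots,X_k$ with $|X_i|=n_i-1$, in order. Every edge from a vertex of $X_i$ to $A\cup X_2\cup\cdots\cup X_{i-1}$ gets color $i$, and so do the edges inside $X_i$. The total number of vertices is
\[
3n_1-1+\sum_{i=2}^k (n_i-1)=2n_1+\sum_{i=1}^k n_i-k,
\]
which is $N-1$.

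For the Gallai property, take any triangle and let $i$ be the largest index such that it meets $X_i$, with $X_1$ standing for $A$. Every edge of the triangle touching $X_i$ has color $i$. If at least two vertices lie in $X_i$, all three edges have color $i$. Otherwise exactly two edges touch $X_i$ and both have color $i$. In either case at most two colors appear, so there is no rainbow triangle.

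Now check that there is no forbidden monochromatic forest.
\begin{itemize}
\item \emph{Color $i\ge2$:} every edge of color $i$ meets $X_i$, so every color-$i$ cherry uses a vertex of $X_i$. Disjoint cherries use distinct such vertices, so there are at most $n_i-1$ of them.
\item \emph{Color $1$:} color-$1$ edges lie inside $A$ only, so color-$1$ cherries live in $A$. Since $|A|=3n_1-1$, at most $n_1-1$ disjoint ones fit.
\end{itemize}
This gives $GR\ge N$.

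The only point needing care is choosing the base clique's size and color so that both the vertex count and the color-$1$ bound come out exactly. The natural choice is $3n_1-1$ in the color of maximum $n_i$, and the remaining verifications are routine.
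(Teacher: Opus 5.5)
Your proposal is correct and follows essentially the same route as the paper: the same color-$1$ base clique on $3n_1-1$ vertices with successively added blocks of $n_i-1$ vertices in color $i$, and the same cover argument (every color-$i$ cherry meets $X_i$) for colors $i\ge 2$. The only difference is that you spell out the no-rainbow-triangle check via the largest index met by the triangle, which the paper leaves as ``by construction.''
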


\begin{proof}
The first inequality is immediate from the definitions, since every Gallai coloring is an edge-coloring.

We now prove the second inequality. Without loss of generality, assume that $n_1=\max\{n_1,\ldots,n_k\}$. We construct a Gallai coloring of a complete graph on $N=2n_1+\sum_{i=1}^{k} n_i-k$ vertices that contains no monochromatic copy of $n_iP_3$ in color $i$ for any $i\in[k]$.

Start with a complete graph on $3n_1-1$ vertices whose edges are all colored with color $1$.  
For each $i=2,\ldots,k$, successively add $n_i-1$ new vertices, and color all edges incident with these new vertices by color $i$.  
No edges are recolored in the process.

By construction, this coloring contains no rainbow triangle, and hence is a Gallai coloring. Since only the initial $3n_1-1$ vertices are incident with edges of color $1$, the subgraph induced by color $1$ has fewer than $3n_1$ vertices, and therefore contains no monochromatic copy of $n_1P_3$.  
For each $i\in[2,k]$, deleting the $n_i-1$ vertices added at the $i$-th step eliminates all edges of color $i$.  
As the vertex cover number of $n_iP_3$ is $n_i$, it follows that the coloring contains no monochromatic copy of $n_iP_3$ in color $i$. This completes the proof.
\end{proof}

\subsection{Proof of Theorem~\ref{thm:main2}}
\begin{repthm2}
  For positive integers $k, n_1, \ldots, n_k$ with $3n_1\ge 4\sum_{i=2}^{k}n_i-2k+4$, we have 
  \[R(n_1P_3,\ldots, n_kP_3)=2n_1+\sum_{i=1}^{k} n_i-k+1.\]
\end{repthm2}

\begin{proof}
  The lower bound follows directly from Theorem~\ref{thm:main1}. 
  For the upper bound, we apply Theorem~\ref{thm:main4} by setting $n=3n_1$ and $s=2$.
  Note that the assumption $3n_1\ge 4\sum_{i=2}^{k}n_i-2k+4$ yields
  \[
    n=3n_1\ge 2\sum_{i=2}^{k}(n_i+1)s-6k+8,
  \]
  so the hypotheses of Theorem~\ref{thm:main4} are satisfied. Moreover,
  \[
    N:=2n_1+\sum_{i=1}^{k} n_i-k+1=n+\sum_{i=2}^{k} n_i-k+1.
  \]
  Consider an arbitrary $k$-edge-coloring of $K_N$. 
  If there is no monochromatic copy of $n_iP_3$ in color $i$ for some $i\in[2,k]$, then by Theorem~\ref{thm:main4} the graph contains a monochromatic cycle $C_n$ in color $1$. 
  Since $C_n$ contains $n_1P_3$ as a subgraph (recall that $n=3n_1$), we obtain a monochromatic copy of $n_1P_3$ in color $1$. 
  This completes the proof.
\end{proof}

\subsection{Proof of Theorem~\ref{thm:main3}}

We first establish four auxiliary lemmas, and then restate Theorem~\ref{thm:main3} and prove it.

\begin{lemma}\label{degree5}
Let $G$ be a graph that contains no subgraph isomorphic to $2P_3$. If there exists a vertex $v\in V(G)$ with $d(v)\ge 5$, then the edge set of $G-v$ is either empty or forms a matching.
\end{lemma}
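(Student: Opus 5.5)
We argue by contradiction: suppose $G-v$ contains either a cherry, i.e.\ two edges sharing a vertex, or at least one edge whose presence, together with $v$, yields $2P_3$. It suffices to show that $G-v$ contains no cherry $P_3$. Indeed, a graph with no $P_3$ has maximum degree at most one, so its edge set is empty or a matching. Thus the plan is: assume $G-v$ contains a cherry $Q$ with vertex set $\{a,b,c\}$, where $b$ is the center, and build a second cherry centered at $v$ that is vertex-disjoint from $Q$.

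The key step is a counting argument on $N(v)$. Since $d(v)\ge 5$ and $Q$ uses only three vertices, none of which equals $v$, at least $5-3=2$ neighbours $x,y$ of $v$ lie outside $\{a,b,c\}$. Then $xvy$ is a cherry in color-free sense (a $P_3$) disjoint from $Q$, so $Q\cup xvy\cong 2P_3$ is a subgraph of $G$, a contradiction. Hence every vertex of $G-v$ has degree at most one in $G-v$, and $E(G-v)$ is empty or a matching.

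There is essentially no obstacle; the only point to check is that the vertices of $Q$ are distinct from $v$ (true since $Q\subseteq G-v$), so removing at most three vertices from $N(v)$ still leaves at least two. Note the hypothesis $d(v)\ge5$ is exactly what is needed: with $d(v)=4$ the cherry could occupy three neighbours of $v$, leaving only one. This remark also indicates how the lemma will be used later (to handle high-degree vertices in a $2P_3$-free color class when proving Theorem~\ref{thm:main3}).
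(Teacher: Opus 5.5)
Your proposal is correct and follows essentially the same approach as the paper. Both proofs take a $P_3$ in $G-v$, use $d(v)\ge 5$ to find two neighbours of $v$ outside its three vertices, and form a disjoint second $P_3$ centred at $v$, giving a contradiction. Your opening sentence (about ``an edge whose presence, together with $v$, yields $2P_3$'') is muddled and unnecessary, but the reduction you then state, that a graph with no $P_3$ has maximum degree at most one, is exactly what is needed.
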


\begin{proof}
We argue by contradiction. Suppose that the edge set of $G-v$ is neither empty nor a matching. Then $G-v$ contains a copy of $P_3$. Since $d(v)\ge 5$, the vertex $v$ has at least two neighbors that do not belong to this $P_3$. Consequently, $v$ together with two of its neighbors forms a $P_3$ that is vertex-disjoint from the previously identified $P_3$. Hence, $G$ contains $2P_3$ as a subgraph, contradicting the assumption. This completes the proof.
\end{proof}

\begin{lemma}\label{edgedegree}
Let $G$ be a connected graph with $e(G)\ge 11$ and $\Delta(G)\le 4$. Then $G$ contains $2P_3$ as a subgraph.
\end{lemma}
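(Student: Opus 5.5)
The plan is a short structural argument by contradiction, organised around a single copy of $P_3$. Suppose $G$ is connected, $e(G)\ge 11$, $\Delta(G)\le 4$, and $G$ has no $2P_3$. Since $e(G)\ge 11$ and $\Delta(G)\le 4$, $G$ certainly contains a $P_3$. Fix one, say $Q=abc$ with centre $b$.

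\textbf{Step 1: $G-V(Q)$ is a matching.} Because $G$ has no $2P_3$, the graph $G-V(Q)$ contains no $P_3$, so its edges form a matching $M$ (possibly empty), plus isolated vertices.

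\textbf{Step 2: count edges.} Every edge of $G$ either lies in $M$ or meets $V(Q)$. Summing degrees over $a,b,c$ counts each edge meeting $V(Q)$ at least once, and the two edges $ab,bc$ twice. So at most $12-2=10$ edges meet $V(Q)$, and at most $9$ if $ac\in E(G)$. Hence $|M|\ge 1$. More precisely, $|M|\ge 11-(\text{number of edges meeting }V(Q))$, so the degrees of $a,b,c$ must be nearly saturated whenever $|M|$ is small. I will also keep the freedom to choose $Q$ cleverly, for instance centred at a vertex of maximum degree.

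\textbf{Step 3: attach a matching edge to $Q$.} Take an edge $xy\in M$. Any neighbour of $x$ or $y$ outside $\{x,y\}$ lies in $V(Q)$, since otherwise $G-V(Q)$ would contain a $P_3$. Connectivity therefore gives some $u\in V(Q)$ adjacent to, say, $x$, and $u\,x\,y$ is a $P_3$. It remains to find a second $P_3$ in $G-\{u,x,y\}$ using the two remaining vertices of $Q$.
\begin{itemize}
\item If $u=a$, the edge $bc$ extends to a $P_3$ as soon as $b$ or $c$ has a neighbour outside $\{a,b,c,x,y\}$.
\item If $u=b$, we need $a$ and $c$ to have a common neighbour outside $\{b,x,y\}$, or one of them to have a neighbour forming a $P_3$ with another edge of $G-\{b,x,y\}$ (for example, another edge of $M$).
\end{itemize}
This yields strong restrictions: in the bad cases the neighbourhoods of the unused vertices of $Q$ are trapped inside a set of size at most $5$.

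\textbf{Step 4 (main obstacle): close the case analysis.} I would combine the restrictions of Step 3, applied to \emph{every} edge of $M$ and every possible attaching vertex $u$, with the count $e(G)\ge 11$ of Step 2. If every edge of $M$ attaches only in the bad configurations, then $b$ and $c$ (or $a$ and $c$) send all their edges into $\{a,b,c,x,y\}$. Using $\Delta\le 4$, this caps the edges meeting $V(Q)$ at well below $11-|M|$ unless $|M|$ is large. But a large $M$ forces many distinct vertices of $Q$ to receive edges from different matching edges, and then two disjoint $P_3$'s of the form $u\,x\,y$ and $u'\,x'\,y'$ appear directly, with $u\ne u'$.

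Organising these subcases (by $|M|\in\{1,2,3,\dots\}$ and by which vertex of $Q$ is used) is where the real work lies. If the analysis gets messy, the fallback I would try is to choose $Q$ as a $P_3$ maximising the number of edges inside $G-V(Q)$, or centred at a vertex of maximum degree, to reduce the number of cases.
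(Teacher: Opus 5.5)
Your Steps 1 and 2 are correct, and so is the basic observation in Step 3. However, Step 4 is where the whole proof lives, and it is only a plan. The mechanism you describe for it does not close.

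For $|M|\in\{2,3\}$ neither of your two tools applies. Suppose $a$ is joined to $xy$ and $b,c$ send edges only into $\{a,b,c,x,y\}$. Then up to $4+5=9\ge 11-|M|$ edges can still meet $V(Q)$, so the count gives no contradiction. Nor is $M$ ``large'': by $\Delta\le 4$ a single vertex of $Q$ can carry up to three matching edges, so nothing yet forces distinct attaching vertices. Also, when $u=b$ the unused vertices $a,c$ are not trapped in $\{a,b,c,x,y\}$, since each may keep a private neighbour among the isolated vertices of $G-V(Q)$. So as written, the lemma is not proved.

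The missing idea is that distinct attaching vertices are already fatal for \emph{two} matching edges. If $xy,x'y'\in M$ are joined to $u\ne u'$ in $V(Q)$, then the paths through $u,x,y$ and through $u',x',y'$ are disjoint. Hence either $|M|=1$, or every edge of $M$ is joined to $V(Q)$ only at one common vertex $u$; then $|M|\le 3$, and $|M|\le 2$ if $u=b$. Let $I$ be the set of isolated vertices of $G-V(Q)$, and $e_Q$ the number of edges meeting $V(Q)$.

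If $|M|=1$, then $e_Q\ge 10$ forces $ac\notin E(G)$ and $d(a)=d(b)=d(c)=4$. If $a$ is joined to $xy$, then $c$ has a neighbour $z\in I$, and $bcz$ avoids $a,x,y$; the case of $c$ is symmetric. If only $b$ is joined to $xy$, then $a$ has three neighbours in $I$, giving a $P_3$ centred at $a$ that avoids $b,x,y$.

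If $|M|\ge 2$ and $u=a$ (the case $u=c$ is symmetric), any neighbour $z\in I$ of $b$ or $c$ gives $zbc$ or $bcz$ disjoint from the path through $a,x,y$. So $e_Q\le 4+1$ and $e(G)\le 8$. If $u=b$, then $\{a,c\}\cup I$ spans no $P_3$, so $d(a),d(c)\le 2$, $e_Q\le 6$ and $e(G)\le 8$.

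With this filled in, your route is a complete and noticeably shorter alternative to the paper's proof. The paper instead fixes a longest path $v_1\cdots v_s$, splits into $s=3,4,5$ and $s\ge 6$, and leaves part of the $s=5$ case to ``straightforward verification''.
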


\begin{proof}
Let $P_s:=v_1v_2\cdots v_s$ be a longest path in $G$.

If $s=3$, then $G$ is either a triangle or a star. Since $\Delta(G)\le 4$, we have $e(G)\le 4$, contradicting the assumption that $e(G)\ge 11$.

If $s=4$, then neither $v_1$ nor $v_4$ has neighbors outside $P_4$, for otherwise the path could be extended. Moreover, $v_2$ and $v_3$ cannot both have neighbors outside $P_4$. Indeed, suppose that $v_2$ has a neighbor $u_2$ and $v_3$ has a neighbor $u_3$ outside $P_4$. If $u_2=u_3$, then $v_1v_2u_2v_3v_4$ is a longer path, a contradiction. If $u_2\neq u_3$, then $v_1v_2u_2$ and $u_3v_3v_4$ form two vertex-disjoint copies of $P_3$, yielding $2P_3$. If none of $\{v_1,v_2,v_3,v_4\}$ has neighbors outside $P_4$, then $e(G)\le 6$, again contradicting $e(G)\ge 11$. Hence, exactly one of $v_2$ and $v_3$ has neighbors outside $P_4$. Without loss of generality, assume that $v_2$ has such neighbors. Since $\Delta(G)\le 4$, $v_2$ has at most two neighbors outside $P_4$, say $u_2$ and $u_3$ (if they exist). As $P_4$ is a longest path, both $u_2$ and $u_3$ are adjacent only to $v_2$. It follows that $e(G)\le 5$, contradicting $e(G)\ge 11$.

If $s=5$, then neither $v_1$ nor $v_5$ has neighbors outside $P_5$, otherwise a longer path would exist. If $v_2$ has a neighbor $u_2$ outside $P_5$, then $v_1v_2u_2$ and $v_3v_4v_5$ form two vertex-disjoint copies of $P_3$. Hence, we may assume that $v_2$ has no neighbors outside $P_5$. By symmetry, we may also assume that $v_4$ has no neighbors outside $P_5$. If none of $\{v_1,v_2,v_3,v_4,v_5\}$ has neighbors outside $P_5$, then $e(G)\le 10$, contradicting $e(G)\ge 11$. Therefore, $v_3$ must have neighbors outside $P_5$. Since $\Delta(G)\le 4$, $v_3$ has at most two such neighbors, say $w_1$ and $w_2$ (if they exist). To avoid the existence of $2P_3$, each of $w_1$ and $w_2$ can have at most one additional neighbor, say $w_3$ and $w_4$, respectively (if they exist). If all of $w_1,w_2,w_3,w_4$ exist, then $w_3$ and $w_4$ must be distinct; otherwise, $w_1w_3w_2$ together with $v_1v_2v_3$ would form two vertex-disjoint copies of $P_3$. In this case, the graph $G-v_3$ contains exactly four edges, namely $v_1v_2$, $v_4v_5$, $w_1w_3$, and $w_2w_4$, which form a matching. Hence $e(G)\le 8$, again contradicting $e(G)\ge 11$. If some of the vertices $w_1,w_2,w_3,w_4$ do not exist, a straightforward verification shows that either $e(G)<11$, yielding a contradiction, or $G$ contains $2P_3$ as a subgraph.

If $s\ge 6$, then $v_1v_2v_3$ and $v_4v_5v_6$ form two vertex-disjoint copies of $P_3$.

In all cases, we conclude that $G$ contains $2P_3$ as a subgraph. This completes the proof.
\end{proof}

\begin{lemma}\label{eventheorem}
Let $k$ be an even integer with $k\ge 14$, and let $\ell$ be an integer with $1\le \ell \le k$.
Consider a $k$-edge-coloring of the complete graph $K_{k+\ell+1}$.
If there exist $k-\ell$ colors such that the edges of each of these colors form a matching,
then among the remaining $\ell$ colors there exists a monochromatic copy of $2P_3$.
\end{lemma}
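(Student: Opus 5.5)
The plan is to argue by contradiction with a degree-counting argument. Fix a $k$-edge-coloring of $K_N$, $N=k+\ell+1$, and let $M_1,\dots,M_{k-\ell}$ be the colors whose classes are matchings. Let $G_1,\dots,G_\ell$ be the graphs of the remaining colors, and suppose none of them contains $2P_3$. Every vertex $u$ has degree $N-1$ and meets at most one edge of each $M_j$. Hence
\[
\sum_{i=1}^{\ell} d_{G_i}(u)\;\ge\; N-1-(k-\ell)=2\ell \qquad\text{for every } u. \tag{$\ast$}
\]
So on average each vertex needs degree $2$ in each remaining color. The rest of the proof shows that $2P_3$-freeness forces most vertices to have degree at most $2$, and usually less, in each $G_i$. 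Since $k\ge 14$ gives $N\ge 16$, the exceptional vertices will not be able to compensate.

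First I classify each $G_i$ using the earlier lemmas.

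\emph{Star type}: $\Delta(G_i)\ge 5$. Let $v_i$ be a vertex of degree at least $5$. By Lemma~\ref{degree5}, $G_i-v_i$ is a matching, so every vertex other than $v_i$ has $G_i$-degree at most $2$. Degree $2$ is possible only for a neighbour of $v_i$ that also lies on a matching edge.

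\emph{Small type}: $\Delta(G_i)\le 4$. Two distinct components each containing a $P_3$ would give $2P_3$, so at most one component $H_i$ is not a single edge or a single vertex. By Lemma~\ref{edgedegree}, $e(H_i)\le 10$, so $|V(H_i)|\le 11$. Every vertex outside $H_i$ has $G_i$-degree at most $1$.

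Now sum $(\ast)$ over a suitable set $U$ of ordinary vertices, namely those that are neither centers $v_i$ nor inside any $H_i$. For $u\in U$ the left side is at most $2t+(\ell-t)$, where $t$ is the number of star-type colors. This already forces $t=\ell$ as long as $U\neq\emptyset$. It also forces every $u\in U$ to have degree exactly $2$ in every $G_i$ and degree exactly $1$ in every $M_j$. So each $u\in U$ is adjacent to every center $v_i$ in color $i$, and is covered by a matching edge of $G_i-v_i$ and by every $M_j$.

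Next I exploit this rigid structure by counting at the centers. Let $S=\{v_1,\dots,v_\ell\}$ and $s=|S|$. The center $v_i$ sends color-$i$ edges to all of $U$. If several colors share a center, its $N-1$ incident edges must carry all of their stars, which is impossible once $|U|$ is large. So $s=\ell$. Then consider the edges inside $S$ and between $S$ and the matchings. Each $v_i$ has degree at most $2$ in every $G_{i'}$ with $i'\neq i$, and at most $1$ in each $M_j$. It needs degree close to $N-1$, but every vertex of $U$ already uses color $i$ towards $v_i$, which caps how many colors can reach $v_i$. I expect this to produce a contradiction, for instance by showing that the color-$i$ matching in $G_i-v_i$ cannot cover $U\cup(S\setminus\{v_i\})$ together with the perfect $M_j$'s. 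A parity argument also helps: when $\ell$ is even, $N$ is odd, so no $M_j$ is perfect, and some vertex of $U$ loses a unit in $(\ast)$.

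The main obstacle is making $U$ nonempty and large, and handling the boundary cases. The exceptional set $S\cup\bigcup_i V(H_i)$ could a priori have up to $\ell+11\ell$ vertices, which may exceed $N$. So I would not take $U$ to be literally all non-exceptional vertices. Instead I would run the count as a weighted sum of $(\ast)$ over all $N$ vertices. Star colors contribute total degree at most $2(N-1)+2\lfloor (N-1)/2\rfloor$, but only with the overlap loss at centers. Small colors contribute at most $2\cdot 10+2\lfloor (N-1)/2\rfloor$ up to a constant. Comparing with the required total $2\ell N$ should give the contradiction, with $k\ge 14$ supplying the slack needed to absorb the additive constants from the $10$-edge components. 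I expect this bookkeeping to be the delicate part, especially the small cases $\ell\in\{1,2\}$ and $\ell=k$, which I would check separately.
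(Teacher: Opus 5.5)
Your plan has a genuine gap exactly where you expect the delicate bookkeeping, and the fallback you propose cannot close it. When some colors have $\Delta\le 4$, the set $U$ may a priori be empty, since the components $H_i$ can cover all $N$ vertices. You then propose to compare a weighted sum of $(\ast)$ over all $N$ vertices with per-color totals of roughly $3N$ for a star color and $N+O(1)$ for a small color. With $t$ star colors this upper total is about $3Nt+(N+O(1))(\ell-t)$, which already exceeds the required $2\ell N$ once $t\ge \ell/2$. With distinct centers, the only ``overlap loss'' comes from the $\binom{\ell}{2}$ edges inside $S$, which is far too few to matter.

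The failure is structural. At a non-center vertex your bound of $2$ per star color meets $(\ast)$ with equality, and at a center $N-1\ge 2\ell$. So no combination of per-vertex degree inequalities can be contradictory when most colors are star type; the contradiction must come from integrality on a vertex set that avoids the centers. This is also why your all-star endgame is unfinished, because you use the wrong parity. There are no $M_j$ at all when $\ell=k$, $N$ is even when $\ell$ is odd, and the vertex missed by a non-perfect $M_j$ may be a center. What your rigid structure actually gives is this: each $u\in U=V\setminus S$ uses its $\ell$ edges to $S$ as star edges and its other $k$ edges inside $U$, one of each color. Hence all $k$ colors are perfect matchings of $U$, while $|U|=N-\ell=k+1$ is odd.

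The missing idea is to delete all centers at once and count only edges inside $V\setminus S$. Suppose $a$ of the $\ell$ colors have $\Delta\le 4$. Then $m:=|V\setminus S|\ge N-(\ell-a)=k+1+a$. On $V\setminus S$ the other $k-a$ colors are matchings. Each of the $a$ small colors has at most $10+\lfloor (m-5)/2\rfloor$ edges there, because at most one component contains a $P_3$ and that component has at most $10$ edges by Lemma~\ref{edgedegree}. Hence $\binom m2\le (k-a)\lfloor m/2\rfloor+a\bigl(10+\lfloor (m-5)/2\rfloor\bigr)$, which is false. For $a=0$, $K_m$ with $m\ge k+1$ cannot be covered by $k$ matchings, since $k+1$ is odd. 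For $a\ge 1$, the left side exceeds the right by at least $a(m-15)/2>0$, because $m\ge k+2\ge 16$.

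The paper reaches the same contradiction one center at a time. The color with the most edges has at least $N$ edges, whereas a $2P_3$-free graph with $\Delta\le 4$ on $N\ge 16$ vertices has fewer, so that color has a center. Deleting the center turns the color into a matching, and induction on $\ell$ applies. The base case $\ell=1$ uses the same odd clique $K_{k+1}$ to rule out a center.
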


\begin{proof}
For each $i\in[k]$, let $G_i$ denote the subgraph induced by the edges of color $i$.
Without loss of generality, assume that for each $i\in[\ell+1,k]$, the edge set of $G_i$ forms a matching.

We proceed by induction on $\ell$.
When $\ell=1$, we show that $G_1$ contains a copy of $2P_3$.
Suppose to the contrary that $G_1$ contains no $2P_3$.

If $\Delta(G_1)\ge 5$, let $v$ be a vertex of maximum degree in $G_1$.
By Lemma~\ref{degree5}, the edge set of $G_1-v$ is either empty or a matching.
Together with the assumption, this implies that in the $k$-edge-coloring of $K_{k+2}-v$ there is no monochromatic $P_3$.
Note that each vertex of $K_{k+2}-v$ is incident with exactly $k$ edges, and for each $i\in[2,k]$,
each vertex is incident with at most one edge of color $i$.
Hence every vertex is incident with exactly one edge of color $1$, that is, $G_1-v$ is a $1$-regular graph.
This is impossible since $G_1-v$ has an odd number of vertices.
Therefore, $\Delta(G_1)\le 4$.

Since each vertex of $K_{k+2}$ is incident with $k+1$ edges, and for each $i\in[2,k]$
each vertex is incident with at most one edge of color $i$, we have $\delta(G_1)\ge 2$.
If $G_1$ is disconnected, then from each connected component we can select a copy of $P_3$,
which yields a copy of $2P_3$ in $G_1$. Thus we may assume that $G_1$ is connected. Let $v$ be a vertex of maximum degree in $G_1$.

If $d_{G_1}(v)=2$, then $G_1$ is a connected $2$-regular graph, and hence a cycle of length at least $16$,
which clearly contains a copy of $2P_3$.
If $d_{G_1}(v)=4$, then $v$ has a non-neighbor $u$.
Let $u_1,u_2$ be two neighbors of $u$, and let $v_1,v_2$ be two neighbors of $v$ distinct from $u_1,u_2$.
Then $u_1uu_2$ and $v_1vv_2$ form two vertex-disjoint copies of $P_3$.
If $d_{G_1}(v)=3$, let $N_{G_1}(v)=\{v_1,v_2,v_3\}$.
If some $v_i$ has two neighbors outside $\{v,v_1,v_2,v_3\}$, then that vertex together with its two neighbors
forms a $P_3$, while $v_jvv_k$ (where $\{i,j,k\}=\{1,2,3\}$) forms another vertex-disjoint $P_3$.
Hence we may assume that each of $v_1,v_2,v_3$ has at most one neighbor outside $\{v,v_1,v_2,v_3\}$.
Consequently, there exists a vertex $w$ at distance at least $3$ from $v$.
Then $w$ together with any two of its neighbors forms a $P_3$, and $v_1vv_2$ forms another vertex-disjoint $P_3$.
In all cases, $G_1$ contains a copy of $2P_3$, completing the proof for the base case $\ell=1$.

Now assume that the statement holds for all smaller values of $\ell$, and consider $\ell\ge 2$.
Without loss of generality, suppose that $e(G_1)\ge e(G_i)$ for all $i\in[2,\ell]$.
Then
\[
e(G_1)\ge
\left\lceil
\frac{\binom{k+\ell+1}{2}-(k-\ell)\left\lfloor \frac{k+\ell+1}{2}\right\rfloor}{\ell}
\right\rceil
\ge k+\ell+1.
\]
We claim that $G_1$ contains a copy of $2P_3$.

If $\Delta(G_1)\ge 5$, let $v$ be a vertex of maximum degree in $G_1$.
By Lemma~\ref{degree5}, the edge set of $G_1-v$ is either empty or a matching.
Together with the assumption, this implies that in the $k$-edge-coloring of $K_{k+\ell+1}-v$
there are $k-\ell+1$ colors whose edges form matchings.
Note that $K_{k+\ell+1}-v=K_{k+(\ell-1)+1}$, and $k-\ell+1=k-(\ell-1)$.
By the induction hypothesis, among the remaining $\ell-1$ colors there exists a monochromatic copy of $2P_3$.
Hence we may assume that $\Delta(G_1)\le 4$.

Let $X$ be a largest connected component of $G_1$.
Since $e(G_1)\ge k+\ell+1$, we have $|X|\ge 3$.
We first claim that $e(G_1[X])\ge 11$.
Suppose, for a contradiction, that $e(G_1[X])\le 10$.
Then the total number of edges in the remaining components of $G_1$ is 
$e(G_1)-e(G_1[X])\ge (k+\ell+1)-e(G_1[X])$.
The number of vertices outside $X$ is $(k+\ell+1)-|X|$.
If $G_1$ contains a copy of $P_3$ outside $X$, then $G_1$ contains $2P_3$ and we are done.
Thus we may assume that $G_1$ contains no $P_3$ outside $X$, and hence the number of edges outside $X$
is at most $\left\lfloor (k+\ell+1-|X|)/2 \right\rfloor$.
Therefore,
\[
\left\lfloor \frac{k+\ell+1-|X|}{2} \right\rfloor
\ge (k+\ell+1)-e(G_1[X]).
\]
If $|X|\ge 5$, then together with $e(G_1[X])\le 10$ this yields
\[
\left\lfloor \frac{k+\ell+1-5}{2}\right\rfloor \ge k+\ell-9,
\]
which implies $k+\ell\le 14$, contradicting $k\ge 14$.
If $|X|=4$, then $e(G_1[X])\le 6$; if $|X|=3$, then $e(G_1[X])\le 3$.
Both cases similarly lead to contradictions.
Hence $e(G_1[X])\ge 11$.

Applying Lemma~\ref{edgedegree} to $G_1[X]$, we conclude that $G_1[X]$ contains a copy of $2P_3$.
This completes the proof of the theorem.
\end{proof}

The proof of the following lemma closely parallels that of Lemma~\ref{eventheorem}.
For the sake of completeness, we provide the full details.

\begin{lemma}\label{oddtheorem}
Let $k$ be an odd integer with $k\ge 9$, and let $\ell$ be an integer with $1\le \ell \le k$.
Consider a $k$-edge-coloring of the complete graph $K_{k+\ell+2}$.
If there exist $k-\ell$ colors such that the edges of each of these colors form a matching,
then among the remaining $\ell$ colors there exists a monochromatic copy of $2P_3$.
\end{lemma}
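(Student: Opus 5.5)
We mirror the proof of Lemma~\ref{eventheorem}, by induction on $\ell$. Let $G_i$ be the color-$i$ graph and assume $G_{\ell+1},\ldots,G_k$ are matchings.

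\textbf{Base case $\ell=1$.} We work on $K_{k+3}$ and suppose $G_1$ contains no $2P_3$. Each vertex has degree $k+2$, and at most $k-1$ of its edges avoid color $1$, so $\delta(G_1)\ge 3$. We first rule out $\Delta(G_1)\ge 5$. If $v$ has degree at least $5$, Lemma~\ref{degree5} says $G_1-v$ is empty or a matching. Then in $K_{k+2}=K_{k+3}-v$ every vertex has degree $k+1$ but at most $k$ incident edges of distinct colors, since each color class there is a matching. This is a contradiction, so $\Delta(G_1)\le 4$.

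Next, if $G_1$ is disconnected, each component has minimum degree at least $3$ and hence contains a $P_3$, giving $2P_3$. So $G_1$ is connected on $k+3\ge 12$ vertices. It has at least $3(k+3)/2\ge 18\ge 11$ edges, so Lemma~\ref{edgedegree} yields $2P_3$. (Alternatively, one can redo the degree case analysis of Lemma~\ref{eventheorem}; the degree-$4$ case uses a non-neighbor of $v$, which exists since $k+3>5$.)

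\textbf{Inductive step $\ell\ge 2$.} Choose $G_1$ with the most edges among $G_1,\ldots,G_\ell$. Then
\[
e(G_1)\ge \left\lceil \frac{\binom{k+\ell+2}{2}-(k-\ell)\left\lfloor \frac{k+\ell+2}{2}\right\rfloor}{\ell}\right\rceil .
\]
The aim is to show this is at least $k+\ell+2$. Write $N=k+\ell+2$. The numerator is at least $N(N-1)/2-(k-\ell)N/2=N(2\ell+1)/2$, so $e(G_1)\ge N(2\ell+1)/(2\ell)>N$.

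If $\Delta(G_1)\ge 5$, Lemma~\ref{degree5} makes $G_1-v$ a matching. Deleting $v$ leaves $K_{k+(\ell-1)+2}$ with $k-(\ell-1)$ matching colors, and the induction hypothesis finishes. So assume $\Delta(G_1)\le 4$, and let $X$ be a largest component of $G_1$.

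If there is a $P_3$ outside $X$, we are done. Otherwise the edges outside $X$ form a matching, so at most $\lfloor (N-|X|)/2\rfloor$ edges lie outside $X$. Suppose $e(G_1[X])\le 10$.
\begin{itemize}
\item If $|X|\ge 5$, then $\lfloor (N-5)/2\rfloor\ge N-10$, so $N\le 15$. But $N=k+\ell+2\ge 13$ with $k\ge 9$, which does not immediately exclude this; it needs more care.
\item If $|X|\le 4$, then $e(G_1[X])\le 6$ and every component has at most $6$ edges, which the edge count excludes.
\end{itemize}
Once $e(G_1[X])\ge 11$ is established, Lemma~\ref{edgedegree} gives the required $2P_3$.

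\textbf{Main obstacle.} The main obstacle is this counting step for small $k$ and $\ell$, namely $N\le 15$, i.e.\ $k\in\{9,11\}$ with small $\ell$. Here I would use the stronger bound $e(G_1)\ge \lceil N(2\ell+1)/(2\ell)\rceil$ instead of $e(G_1)\ge N$. With $|X|\ge 5$ and at most $10$ edges in $X$, this should force more than $\lfloor (N-5)/2\rfloor$ edges outside $X$. For example, when $\ell=2$ the bound is $e(G_1)\ge 5N/4$, and $5N/4-10>(N-5)/2$ for all $N\ge 13$. For larger $\ell$ we have $N\ge 9+\ell+2$, and the plain inequality $\lfloor (N-5)/2\rfloor< N-10$ holds once $N\ge 16$. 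Any remaining pairs can be handled by combining the two estimates.
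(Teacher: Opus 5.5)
Your proof is correct. The inductive step is the paper's argument: take a color class with the most edges, use Lemma~\ref{degree5} plus induction when $\Delta(G_1)\ge 5$, then show a largest component $X$ has $e(G_1[X])\ge 11$ and apply Lemma~\ref{edgedegree}. Two differences are worth noting.

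In the base case, the paper does not use Lemma~\ref{edgedegree}. It fixes a cherry $v_1v_2v_3$ in $G_1$ and notes that $\delta(G_1)\ge 3$ forces each of the $k$ outside vertices to have at least two neighbors in $\{v_1,v_2,v_3\}$. That is too many edges at three vertices of degree at most $4$. Your route (connectivity, then $e(G_1)\ge 3(k+3)/2\ge 11$, then Lemma~\ref{edgedegree}) is equally short and valid.

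More importantly, the ``main obstacle'' you flag disappears with your own estimate. You have $e(G_1)\ge N+N/(2\ell)$, and $N/(2\ell)=(k+\ell+2)/(2\ell)>1$ because $\ell\le k$. Hence $e(G_1)\ge N+2=k+\ell+4$, which is exactly the bound the paper uses. With it, $|X|\ge 5$ and $e(G_1[X])\le 10$ force $\lfloor (N-5)/2\rfloor\ge N-8$, i.e.\ $k+\ell\le 9$. This contradicts $k\ge 9$, $\ell\ge 2$ uniformly, with no case split.

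Your leftover pairs $(k,\ell)=(9,3)$ and $(9,4)$ do check out: $e(G_1)\ge 17$, while at most $4$, respectively $5$, edges fit outside $X$. Still, you should either carry out that check explicitly or use the sharper bound, rather than just asserting that the remaining cases can be handled.
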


\begin{proof}
For each $i\in[k]$, let $G_i$ denote the subgraph induced by the edges of color $i$.
Without loss of generality, assume that for each $i\in[\ell+1,k]$, the edge set of $G_i$ forms a matching.

We proceed by induction on $\ell$.
When $\ell=1$, we prove that $G_1$ contains a copy of $2P_3$.
Suppose, to the contrary, that $G_1$ contains no $2P_3$.

If $\Delta(G_1)\ge 5$, let $v$ be a vertex of maximum degree in $G_1$.
Consider the complete graph $K_{k+3}-v$.
By the assumption, for each $i\in[2,k]$, every vertex in this graph is incident with at most one edge of color $i$.
Since each vertex of $K_{k+3}-v$ is incident with $k+1$ edges in total,
it follows that every vertex is incident with at least two edges of color $1$.
Hence $G_1-v$ contains a copy of $P_3$.
Choose two neighbors of $v$ that do not lie on this $P_3$; together with $v$ they form another copy of $P_3$.
This yields a copy of $2P_3$ in $G_1$, contradicting our assumption.
Therefore, $\Delta(G_1)\le 4$.

Since each vertex of $K_{k+3}$ is incident with $k+2$ edges, and for each $i\in[2,k]$
each vertex is incident with at most one edge of color $i$, we have $\delta(G_1)\ge 3$.
Fix a copy of $P_3$, say $v_1v_2v_3$, in $G_1$.
For any vertex $u\notin\{v_1,v_2,v_3\}$, if $u$ has two neighbors outside $\{v_1,v_2,v_3\}$,
then we obtain a copy of $P_3$ that is vertex-disjoint from $v_1v_2v_3$, a contradiction.
Hence every such vertex $u$ has at least two neighbors in $\{v_1,v_2,v_3\}$.
Consequently, there are at least $14$ edges between $\{v_1,v_2,v_3\}$ and $K_{k+3}\setminus\{v_1,v_2,v_3\}$,
which contradicts the assumption that $\Delta(G_1)\le 4$.
This completes the proof of the base case $\ell=1$.

Now assume that the statement holds for all smaller values of $\ell$, and consider $\ell\ge 2$.
Without loss of generality, suppose that $e(G_1)\ge e(G_i)$ for all $i\in[2,\ell]$.
Then
\[
e(G_1)\ge
\left\lceil
\frac{\binom{k+\ell+2}{2}-(k-\ell)\left\lfloor \frac{k+\ell+2}{2}\right\rfloor}{\ell}
\right\rceil
\ge k+\ell+4.
\]
We claim that $G_1$ contains a copy of $2P_3$.

If $\Delta(G_1)\ge 5$, let $v$ be a vertex of maximum degree in $G_1$.
By Lemma~\ref{degree5}, the edge set of $G_1-v$ is either empty or a matching.
Together with the assumption, this implies that in the $k$-edge-coloring of $K_{k+\ell+2}-v$
there are $k-\ell+1$ colors whose edges form matchings.
Note that $K_{k+\ell+2}-v$ is isomorphic to $K_{k+(\ell-1)+2}$ and that $k-\ell+1=k-(\ell-1)$.
By the induction hypothesis, among the remaining $\ell-1$ colors there exists a monochromatic copy of $2P_3$.
Hence we may assume that $\Delta(G_1)\le 4$.
Moreover, since $e(G_1)\ge k+\ell+4$, it follows that $\Delta(G_1)\ge 3$.

Let $X$ be a largest connected component of $G_1$.
Clearly, $|X|\ge 4$.
We claim that $e(G_1[X])\ge 11$.
Suppose, for a contradiction, that $e(G_1[X])\le 10$.
Then the total number of edges in the remaining components of $G_1$ is at least
$e(G_1)-e(G_1[X])\ge (k+\ell+4)-e(G_1[X])$.
The number of vertices outside $X$ is at most $(k+\ell+2)-|X|$.
If $G_1$ contains a copy of $P_3$ outside $X$, then $G_1$ contains $2P_3$ and we are done.
Thus we may assume that $G_1$ contains no $P_3$ outside $X$,
and hence the number of edges outside $X$ is at most
$\left\lfloor (k+\ell+2-|X|)/2 \right\rfloor$.
Therefore,
\[
\left\lfloor \frac{k+\ell+2-|X|}{2} \right\rfloor
\ge (k+\ell+4)-e(G_1[X]).
\]
If $|X|\ge 5$, then together with $e(G_1[X])\le 10$ this yields
\[
\left\lfloor \frac{k+\ell+2-5}{2}\right\rfloor \ge k+\ell-6,
\]
which implies $k+\ell\le 9$, contradicting the assumption that $k\ge 9$.
If $|X|=4$, then $e(G_1[X])\le 6$, which similarly leads to a contradiction.
Hence $e(G_1[X])\ge 11$.

Applying Lemma~\ref{edgedegree} to $G_1[X]$, we conclude that $G_1[X]$ contains a copy of $2P_3$.
This completes the proof of the theorem.
\end{proof}

\begin{repthm3}
\[
R_k(2P_3)=
\begin{cases}
2k+2, & \text{if}\ k\ge 9\ \text{and}\ k\ \text{is odd},\\[4pt]
2k+1, & \text{if}\ k\ge 14\ \text{and}\ k\ \text{is even}.
\end{cases}
\]
\end{repthm3}

\begin{proof}
The lower bound follows directly from Theorem~\ref{thm:main1}. If $k$ is even, the upper bound follows immediately from Lemma~\ref{eventheorem} by taking $\ell=k$. If $k$ is odd, it follows analogously from Lemma~\ref{oddtheorem} by taking $\ell=k$.
\end{proof}

\subsection{Proof of Theorem~\ref{thm:main4}}
\begin{lemma}[Bondy~\cite{Bondy1971}]\label{lem:Bondy}
Let $G$ be a graph on $n$ vertices. If $\delta(G)\ge n/2$, then $G$ contains a cycle of every length $\ell$ with $3\le \ell\le n$, unless $G$ is the complete bipartite graph $K_{n/2,n/2}$.
\end{lemma}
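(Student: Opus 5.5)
The plan is the classical Bondy argument, and it works for any $n\ge 3$ (for $n\le 2$ the range of lengths is empty). First, by Dirac's theorem the hypothesis $\delta(G)\ge n/2$ gives a Hamiltonian cycle $C=v_1v_2\cdots v_nv_1$, with indices taken modulo $n$. This settles $\ell=n$. It also gives
\[
e(G)\ge \frac{n\,\delta(G)}{2}\ge \frac{n^2}{4},
\]
so it suffices to prove Bondy's stronger statement: a Hamiltonian graph with $e(G)\ge n^2/4$ is pancyclic unless it is $K_{n/2,n/2}$. Within that statement we first dispose of short cycles. If $e(G)>n^2/4$, Mantel's theorem gives a triangle, and a standard ``chord shortening'' induction removes a vertex while keeping a Hamiltonian cycle and the edge bound. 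So the main work is a counting argument for a fixed missing length $\ell\in[3,n-1]$.

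Fix $\ell\in[3,n-1]$ and suppose $G$ has no cycle of length $\ell$. The key observation concerns two consecutive vertices $v_i,v_{i+1}$ of $C$ and an index $j$. Suppose $v_iv_j\in E(G)$ and $v_{i+1}v_{j+\ell-3}\in E(G)$, and the segment $v_j,v_{j+1},\ldots,v_{j+\ell-3}$ avoids $v_i,v_{i+1}$. Then
\[
v_i\,v_{i+1}\,v_{j+\ell-3}\,v_{j+\ell-4}\cdots v_j\,v_i
\]
is a cycle on exactly $2+(\ell-2)=\ell$ vertices. Therefore, for each admissible $j$, at most one of the two edges $v_iv_j$ and $v_{i+1}v_{j+\ell-3}$ is present.

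The map $j\mapsto j+\ell-3$ pairs up the possible neighbours of $v_i$ with those of $v_{i+1}$, apart from $O(\ell)$ boundary indices near $v_i$ and $v_{i+1}$. This pairing gives
\[
d(v_i)+d(v_{i+1})\le n+c,
\]
where $c$ is a small correction term. Summing over $i$ along $C$ yields $2e(G)\le n(n+c)/2$. I expect the main obstacle to be the bookkeeping near the boundary: one must treat the indices close to $i$ carefully, and where needed also use the symmetric pairing obtained by traversing $C$ in the opposite direction, so that the correction term vanishes after summing. This should give $e(G)\le n^2/4$ exactly. I would first check small $\ell$, such as $\ell=3$ or $\ell=4$, to calibrate the offsets before doing the general case.

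Combining this with $e(G)\ge n^2/4$, every inequality in the argument must hold with equality. In particular $\delta(G)=n/2$ with $n$ even, $G$ is $n/2$-regular, and each pair $(v_i,v_{i+1})$ realizes the pairing exactly. From the equality structure, for each $i$ exactly one of $v_iv_j$ and $v_{i+1}v_{j+\ell-3}$ is an edge for every $j$. I would use this to show that the neighbourhoods alternate along $C$: the neighbours of $v_1$ are exactly the vertices $v_j$ with $j$ even. This forces $G$ to have bipartition $\{v_{\text{odd}}\}\cup\{v_{\text{even}}\}$ with all cross edges present, so $G\cong K_{n/2,n/2}$. Conversely, $K_{n/2,n/2}$ indeed has no odd cycles, which explains why it is the exception in the lemma.
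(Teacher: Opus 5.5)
The paper gives no proof of this lemma; it simply quotes Bondy. Your argument therefore has to stand on its own. The reduction to a Hamiltonian cycle $C=v_1\cdots v_nv_1$ with $e(G)\ge n^2/4$ is fine, but the central counting step fails as described.

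In your cycle $v_iv_{i+1}v_{j+\ell-3}\cdots v_jv_i$, the $n-\ell+1$ admissible values of $j$ pair the candidates $v_{i+2},\dots,v_{i-\ell+2}$ of $v_i$ with the candidates $v_{i+\ell-1},\dots,v_{i-1}$ of $v_{i+1}$. What remains unpaired is the $\ell-3$ vertices just behind $v_i$ (as candidates for $v_i$) and the $\ell-3$ vertices just ahead of $v_{i+1}$ (as candidates for $v_{i+1}$). So the best this gives is $d(v_i)+d(v_{i+1})\le n+\ell-3$. That loss is linear in $\ell$, not a boundary effect: summing yields only $e(G)\le n(n+\ell-3)/4$, which is about $3n^2/8$ when $\ell\approx n/2$.

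Traversing $C$ in the opposite direction does not help. An $\ell$-cycle has no orientation, so the reversed construction produces exactly the same cycles and hence the same constraints. Your equality analysis inherits the problem: with a local bound of $n+\ell-3$, the equality $e(G)=n^2/4$ does not force your pairs to be tight.

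The aside about Mantel plus a ``standard chord-shortening induction'' is also not routine. You would need an $(n-1)$-cycle avoiding a vertex of degree at most $e(G)-(n-1)^2/4$, and nothing in your sketch provides one. You should drop it.

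The missing idea is to combine two different families of $\ell$-cycles, both of which must be absent.
\begin{itemize}
\item The first is the \emph{nested} variant of your short cycle: $v_iv_{i+1}v_{i+1+u}v_{i+2+u}\cdots v_{i+u+\ell-2}v_i$ for $1\le u\le n-\ell+1$. It gives $[v_{i+1}v_{i+1+u}\in E]+[v_iv_{i+u+\ell-2}\in E]\le 1$. At $u=1$ and $u=n-\ell+1$ this just says $v_iv_{i+\ell-1},v_{i+1}v_{i-\ell+2}\notin E$.
\item The second is a cycle that avoids the edge $v_iv_{i+1}$ and uses two long arcs of $C$, generalising the usual $C_{n-1}$ argument: $v_{i+1}v_{i+2}\cdots v_{i+j}v_iv_{i-1}\cdots v_{i+j-\ell+1}v_{i+1}$ for $2\le j\le \ell-2$. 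It gives $[v_iv_{i+j}\in E]+[v_{i+1}v_{i+j-\ell+1}\in E]\le 1$.
\end{itemize}
The first family pairs $v_i$'s candidates at forward distance $\ell,\dots,n-2$ with $v_{i+1}$'s candidates at forward distance $2,\dots,n-\ell$. The second pairs exactly the complementary ranges: $v_i$'s candidates at distance $2,\dots,\ell-2$ with $v_{i+1}$'s at distance $n-\ell+2,\dots,n-2$. Together with the two cycle-neighbours of each vertex, this gives $d(v_i)+d(v_{i+1})\le 4+(\ell-3)+(n-\ell-1)=n$ with no correction term. Hence $e(G)\le n^2/4$.

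Even then, the extremal case needs an actual argument. With $\delta(G)\ge n/2$ you get $n/2$-regularity and tightness of every pair above, for every $i$, and you must deduce $G\cong K_{n/2,n/2}$. For $\ell=3$ this is immediate: $N(v_i)$ and $N(v_{i+1})$ partition $V(G)$, so $N(v_i)=N(v_{i+2})$. For larger $\ell$, the tight relations link offsets shifted by $\ell-3$ and by $\ell-1$, and they have to be propagated around $C$. ``The neighbourhoods alternate'' is the conclusion of that work, not a substitute for it.
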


\begin{repthm4}
  For positive integers $k,s,n, n_2, \ldots, n_k$ with $s\ge 2$ and $n\ge 2\sum_{i=2}^{k}(n_i+1)s-6k+8$, we have 
  \[R(C_n, n_2K_{1,s}, \ldots, n_kK_{1,s})=n+\sum_{i=2}^{k} n_i-k+1.\]
\end{repthm4}
\begin{proof}
We first prove the lower bound. Consider a complete graph on $n-1$ vertices and color every edge among them with color~$1$. For each $i\in\{2,\ldots,k\}$, add $n_i-1$ new vertices one step at a time, and color every edge incident with these newly added vertices with color~$i$.

In this coloring, only the original $n-1$ vertices are incident with color~$1$ edges. Hence the color-$1$ subgraph has fewer than $n$ vertices and thus contains no monochromatic copy of $C_n$. Fix $i\in[2,k]$. Removing the $n_i-1$ vertices introduced at the $i$-th step deletes all edges of color~$i$. As the vertex cover number of $n_iK_{1,s}$ is $n_i$, it follows that the coloring contains no monochromatic copy of $n_iK_{1,s}$ in color $i$.

We now turn to the proof of the upper bound. Let $G$ be a $k$-edge-colored complete graph on $n+\sum_{i=2}^{k} n_i-k+1$ vertices.

For each $i\in[2,k]$, we find as many vertex-disjoint $K_{1,s}$ of color $i$ in $G$ as possible. Suppose that $n_i'$ such $K_{1,s}$ are obtained. Then $n_i'<n_i$, since otherwise there would exist $n_i$ vertex-disjoint copies of $K_{1,s}$ in color $i$, completing the proof. Let $X_i$ denote the vertex set of these $n_i'K_{1,s}$ of color $i$. For each $K_{1,s}$ of color $i$, we claim the following.
  
  \begin{claim}
  For every $K_{1,s}$ of color $i$, all but at most one of its vertices have at most $2s-2$ edges of color $i$ connecting to $V(G)\setminus X_i$.
  \end{claim}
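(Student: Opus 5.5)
This claim follows from the maximality of the collection of $n_i'$ vertex-disjoint copies of $K_{1,s}$ in color $i$. The plan is to fix one such copy $S$ and suppose, for contradiction, that two distinct vertices $u,w\in V(S)$ each send at least $2s-1$ edges of color $i$ to $Y:=V(G)\setminus X_i$. From this I will build two vertex-disjoint copies of $K_{1,s}$ in color $i$ inside $(Y\cup V(S))$ that together replace $S$. The resulting collection has $n_i'+1$ vertex-disjoint color-$i$ stars, contradicting the choice of $n_i'$ as the maximum.

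The construction is a greedy split of neighborhoods. Let $A=N_i(u)\cap Y$ and $B=N_i(w)\cap Y$, where $N_i$ denotes the color-$i$ neighborhood, so $|A|,|B|\ge 2s-1$. I want disjoint sets $A'\subseteq A$ and $B'\subseteq B$ with $|A'|=|B'|=s$. Then $u$ together with $A'$ and $w$ together with $B'$ are vertex-disjoint copies of $K_{1,s}$ in color $i$. They are also disjoint from $X_i\setminus V(S)$, since $A',B'\subseteq Y$ and $u,w\in V(S)$. To choose them:
\begin{itemize}
\item First put $s$ vertices of $A\setminus B$ into $A'$ if $A\setminus B$ has that many; otherwise put all of $A\setminus B$ into $A'$ and fill up from $A\cap B$.
\item Choose $B'$ symmetrically from $B\setminus A$, then from the remaining part of $A\cap B$.
\end{itemize}

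The counting step is the only place that needs care: I must show the common part $A\cap B$ is large enough to serve both $A'$ and $B'$. Write $a=|A\setminus B|$, $b=|B\setminus A|$ and $c=|A\cap B|$. What is needed is
\[
\max(0,s-a)+\max(0,s-b)\le c .
\]
There are three cases.
\begin{itemize}
\item If both maxima are positive, the requirement is $2s-a-b\le c$. This holds because $a+c\ge 2s-1$ gives $c\ge 2s-1-a\ge 2s-a-b$ whenever $b\ge 1$. If $b=0$, then $B\subseteq A$, so $c=|B|\ge 2s-1$, and the requirement $2s-a\le c$ again follows from $a+c\ge 2s-1$ together with $a\ge1$ or $c\ge 2s$. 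Both subcases reduce to the bound $2s-1$, which is exactly why the threshold $2s-2$ in the claim is the right one.
\item If exactly one maximum is positive, say $s-a>0$ and $b\ge s$, the requirement is $s-a\le c$, and this holds since $a+c\ge 2s-1\ge s$.
\item If neither is positive, there is nothing to prove.
\end{itemize}

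Two edge cases need a remark but are routine. The first is when $u$ or $w$ is the center of $S$ rather than a leaf; this changes nothing, because only the edges from $u$ and $w$ into $Y$ are used. The second is that the leftover vertices of $S$ other than $u,w$ are simply discarded and cause no conflict. I do not expect a real obstacle here.
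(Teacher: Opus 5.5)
Your proposal has a genuine gap: the counting step fails in the tight case, and that case needs vertices of $S$, which you discard. In the subcase $b=0$ you derive $2s-a\le c$ from ``$a\ge 1$ or $c\ge 2s$'', but neither need hold. The possibility $a=0$, $c=2s-1$ remains, that is, $A=B$ with $|A|=2s-1$. Your requirement then reads $2s\le 2s-1$, and this is not an accounting slip: $A\cup B$ has only $2s-1$ vertices, so it contains no two disjoint $s$-sets at all. For example, take $s=2$ and let $u,w$ be the two leaves of $S$, both joined in color $i$ to the same three vertices of $Y$ and to nothing else in $Y$. Then no two disjoint color-$i$ stars centered at $u$ and $w$ with all leaves in $Y$ exist. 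As written, your argument proves the claim only with $2s-1$ in place of $2s-2$, which would not yield the stated hypothesis $n\ge 2\sum_{i=2}^{k}(n_i+1)s-6k+8$ in the degree count that follows. Your remark that $2s-1$ is ``exactly why the threshold $2s-2$ is right'' is therefore mistaken: with leaves taken only from $Y$ you would need $|A\cup B|\ge 2s$.

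The missing idea is to use $V(S)\setminus\{u,w\}$ rather than discard it. Both of your ``routine'' edge-case remarks are where the argument must change.
\begin{itemize}
\item If $u,w$ are both leaves, the center $u_0$ of $S$ is a color-$i$ neighbor of both and lies outside $Y$. Give $u$ any $s$ vertices $A'\subseteq A$. Give $w$ the vertex $u_0$ together with $s-1$ vertices of $B\setminus A'$, which has at least $(2s-1)-s=s-1$ elements.
\item If $u$ is the center, then $w$ is a leaf. Keep the other $s-1$ leaves of $S$ at $u$ and add one vertex $y\in A$. Then choose $s$ color-$i$ neighbors of $w$ in $B\setminus\{y\}$, which has at least $2s-2\ge s$ elements since $s\ge 2$.
\end{itemize}
This is the paper's argument. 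The one extra vertex supplied by $S$ is precisely what makes $2s-1$ outside neighbors, rather than $2s$, sufficient.
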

  
  \begin{proof}
  We proceed by contradiction. Suppose there exists a $K_{1,s}$ of color $i$ with at least two vertices, say $u_1$ and $u_2$, each having at least $2s-1$ edges of color $i$ connecting to $V(G)\setminus X_i$.

  If $u_1$ and $u_2$ are both leaves of $K_{1,s}$, let $u_0$ denote the center of $K_{1,s}$. Then, we can select $s$ neighbors of $u_1$ from $V(G)\setminus X_i$, denoted by the set $U_1$, such that all edges between $u_1$ and $U_1$ are of color $i$. Similarly, we can select $s$ neighbors of $u_2$ from $(V(G)\setminus (X_i\cup U_1))\cup\{u_0\}$, denoted by the set $U_2$, such that all edges between $u_2$ and $U_2$ are of color $i$. In this way, we remove the $K_{1,s}$ of color $i$ centered at $u_0$ from $X_i$, while adding two new $K_{1,s}$ of color $i$ centered at $u_1$ and $u_2$, respectively. This results in $n_i'+1$ vertex-disjoint $K_{1,s}$ of color $i$, which contradicts the selection of $X_i$.

If $u_1$ and $u_2$ are not both leaves of $K_{1,s}$, without loss of generality, assume $u_1$ is the center of $K_{1,s}$. We can select a neighbor $w$ of $u_1$ in $V(G)\setminus X_i$ such that the edge between $u_1$ and $w$ is of color $i$. Replacing $u_2$ with $w$, we obtain a $K_{1,s}$ of color $i$ centered at $u_1$. Next, we select $s$ neighbors of $u_2$ from $V(G)\setminus (X_i\cup\{w\})$, denoted by the set $U_2$, such that all edges between $u_2$ and $U_2$ are of color $i$. This again results in $n_i'+1$ vertex-disjoint $K_{1,s}$ of color $i$, leading to a contradiction with the selection of $X_i$.
\end{proof}
  
  By the above claim, for each $K_{1,s}$ of color $i$ in $X_i$, we can remove one vertex such that every remaining vertex has at most $2s-2$ edges of color $i$ connecting to $V(G)\setminus X_i$. Thus, $n_i'$ vertices are removed from $X_i$, and the remaining vertex set is denoted by $X_i'$.
  
  Let $G'$ be the complete graph induced by $(V(G)\setminus(X_2\cup\cdots\cup X_k))\cup(X_2'\cup\cdots\cup X_k')$. It is easy to verify that \[|G'|\ge n\,.\] For each $i\in[2,k]$, every vertex in $X_i'$ has at most $2s-2$ edges of color $i$ connecting to $V(G')\setminus X_i'$, and every vertex in $V(G')\setminus X_i'$ has at most $s-1\le2s-3$ edges of color $i$ within $G'\setminus X_i'$. Therefore, in $G'$, every vertex has at most 
  \[
  (2s-2)+(|X_i'|-1)\le(2s-2)+(n_i's-1)\le(n_i+1)s-3
  \]
  edges of color $i$. Consequently, every vertex in $G'$ has at least
  \begin{equation}\label{inequ1}
  (|G'|-1)-\sum_{i=2}^{k}((n_i+1)s-3)\ge|G'|/2
  \end{equation}
  edges of color $1$. The inequality holds because $|G'|/2\ge n/2\ge\sum_{i=2}^{k}(n_i+1)s-3k+4$. Furthermore, equality in (\ref{inequ1}) holds if and only if $|G'|/2=n/2=\sum_{i=2}^{k}(n_i+1)s-3k+4$.
  
  Thus, by Lemma~\ref{lem:Bondy}, whether $|G'|=n$ or $|G'|>n$, we can always find a $C_n$ of color $1$ in $G'$. This completes the proof.
  \end{proof}

\subsection{Proof of Theorem~\ref{thm:main5}}
Theorem~\ref{thm:main5} can be reduced to Lemmas~\ref{lem:2P3} and~\ref{lem:3P3} below. Moreover, Lemmas~\ref{lem:Dirac} and~\ref{lem:edge2P3} help streamline the proof.

\begin{lemma}[Dirac~\cite{Dirac1952}]\label{lem:Dirac}
  If $G$ is a $2$-connected graph, then it contains a cycle of length at least $\min\{2\delta(G),\,|V(G)|\}$.
\end{lemma}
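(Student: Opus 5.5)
We prove Lemma~\ref{lem:Dirac} by a longest-path argument. Put $\delta=\delta(G)$ and note that $\delta\ge 2$ because $G$ is $2$-connected. Among all longest paths, choose $P=v_0v_1\cdots v_m$ so that the index $\max\{i: v_0v_i\in E(G)\}$ is as large as possible. Since $P$ is longest, every neighbor of either endpoint lies on $P$. Hence $d(v_0)\ge\delta$ and $d(v_m)\ge\delta$ are witnessed entirely inside $P$.

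The first case is when some cycle $C$ contains every vertex of $P$. If $C$ missed a vertex of $G$, connectivity would give a vertex $x$ outside $C$ adjacent to some vertex of $C$. Opening $C$ at that vertex and appending $x$ would produce a path with $m+2$ vertices, which is impossible. So $C$ is Hamiltonian and has length $|V(G)|$.

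Otherwise we use the standard Pósa-type counting. Let $I=\{i: v_0v_{i+1}\in E\}$ and $J=\{i: v_iv_m\in E\}$, both viewed as subsets of $\{0,\dots,m-1\}$. If $I\cap J\neq\emptyset$, then $v_0v_{i+1}\cdots v_mv_i\cdots v_0$ is a cycle through all of $P$, which falls into the first case. So we may assume $I\cap J=\emptyset$ and $|I|+|J|\ge 2\delta$.

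If the largest neighbor index of $v_0$ and the smallest neighbor index of $v_m$ give "crossing" chords $v_0v_j$ and $v_iv_m$ with $i<j$, we take the cycle $v_0v_1\cdots v_iv_mv_{m-1}\cdots v_jv_0$. This is the usual crossing-chord cycle, where we skip the segment between $i$ and $j$. Its length is at least $|\{\text{neighbors of }v_0\}|+|\{\text{neighbors of }v_m\}|$, hence at least $2\delta$, after bounding the skipped segment.

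The hard case is when the neighborhoods are "separated": all neighbors of $v_0$ come before all neighbors of $v_m$ along $P$. We plan to use $2$-connectivity here, as in Dirac's original argument. Let $a$ be the largest index with $v_0v_a\in E$, so $a\ge\delta$, and let $b$ be the smallest index with $v_bv_m\in E$, so $b\le m-\delta$. By $2$-connectivity, $\{v_a\}$ does not separate the initial segment from the rest. So there is a path $Q$, internally disjoint from $P$ (any outside vertices would contradict maximality, so in fact $Q$ is a chord), from some $v_p$ with $p<a$ to some $v_q$ with $q>a$. We choose such a chord with $p$ minimal and $q$ maximal, and repeat the idea to link up with the neighborhood of $v_m$. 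Combining the chord $v_0v_a$, the jumping chords, and the chord $v_bv_m$ yields a cycle that contains the segments $v_0\ldots v_{\delta}$-worth of $v_0$'s neighbors and the corresponding $\delta$ vertices near $v_m$. That gives length $\ge 2\delta$.

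The main obstacle is making this chaining rigorous. We will handle it by the extremal choice of $P$, namely maximizing $a$ over all longest paths, possibly after rotating $P$ via Pósa rotations. This ensures the jump chords cannot lie entirely inside $[0,a]$. If the bookkeeping becomes unwieldy, we would instead cite the classical Dirac theorem directly, since the paper uses Lemma~\ref{lem:Dirac} as a known result from \cite{Dirac1952}.
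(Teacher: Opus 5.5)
The paper does not prove this lemma; it quotes Dirac's 1952 theorem, so your closing fallback of citing it is exactly what the paper does. As a self-contained proof, your first step is fine: a cycle through $V(P)$ must be Hamiltonian, and otherwise $I\cap J=\emptyset$ and $|I|+|J|\ge 2\delta$. The crossing case, however, is wrong as stated. You use the extreme pair ($a$ the largest neighbour index of $v_0$, $b$ the smallest of $v_m$). The cycle $v_0\cdots v_bv_m\cdots v_av_0$ then discards every neighbour of $v_0$ and $v_m$ strictly between $v_b$ and $v_a$, and there is no way to ``bound the skipped segment.'' For example, in $K_{3,5}$ with parts $\{x_1,x_2,x_3\}$ and $\{y_1,\dots,y_5\}$, take the longest path $y_1x_1y_2x_2y_3x_3y_4$. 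Your cycle is $y_1x_1y_4x_3y_1$, of length $4<2\delta=6$. The fix is to take the crossing pair $v_iv_m$, $v_0v_j$ ($i<j$) with $j-i$ \emph{minimal}. Then no vertex strictly between $v_i$ and $v_j$ is adjacent to $v_0$ or $v_m$, so $I\cup J\subseteq[0,i]\cup[j-1,m-1]$. Since $i\le j-2$ (because $I\cap J=\emptyset$), this set has exactly as many elements as the cycle $v_0\cdots v_iv_m\cdots v_jv_0$ has vertices, giving length at least $|I|+|J|\ge 2\delta$.

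The separated case $a\le b$ is not proved, and it is the only place where $2$-connectivity enters. Your claim that a $P$-path $Q$ jumping over $v_a$ must be a chord does not follow from maximality: rerouting to $v_{p+1}\cdots v_av_0\cdots v_pQv_q\cdots v_m$ beats $P$ only if $Q$ has at least $q-a$ interior vertices. That error is harmless, though. The real gap is that when $a<b$ one jump need not pass $v_b$, and you never specify the chain of jumps, why consecutive jumps interlock, or which cycle you take. P\'osa rotations maximizing $a$ do not supply this. A working version is as follows. Put $t_0=a$. Use connectivity of $G-v_{t_{i-1}}$ to choose a $P$-path $Q_i$ from $v_{s_i}$ to $v_{t_i}$ with $s_i<t_{i-1}<t_i$ and $t_i$ maximal, and stop at the first $t_r>b$. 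Maximality of each $t_i$ forces $s_{i+1}\ge t_{i-1}$ and makes the $Q_i$ pairwise internally disjoint. Now traverse the odd-indexed $Q_i$ forward and the even-indexed ones backward. Close at $v_0$ through $v_{a'}$, the \emph{first} neighbour of $v_0$ after $v_{s_1}$; closing through $v_a$ would lose $v_{s_1+1},\dots,v_{a-1}$. Use the edge $v_mv_{b'}$, where $v_{b'}$ is the last neighbour of $v_m$ before $v_{t_r}$. For $r=2$ this cycle is
\[
v_0\cdots v_{s_1}\,Q_1\,v_{t_1}\cdots v_{b'}\,v_m v_{m-1}\cdots v_{t_2}\,Q_2\,v_{s_2}v_{s_2-1}\cdots v_{a'}\,v_0 .
\]
Here $N(v_0)\subseteq\{v_1,\dots,v_{s_1}\}\cup\{v_{a'},\dots,v_{s_2}\}$ and $N(v_m)\subseteq\{v_{t_1},\dots,v_{b'}\}\cup\{v_{t_2},\dots,v_{m-1}\}$, so counting shows the cycle has at least $d(v_0)+d(v_m)\ge 2\delta$ vertices. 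Without this argument, your proof covers only the Hamiltonian case and the (corrected) crossing case.
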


\begin{lemma}\label{lem:edge2P3}
Let $G$ be a connected graph with $e(G)\ge 11$ and with at most one vertex of degree at most $2$. Then $G$ contains $2P_3$ as a subgraph.
\end{lemma}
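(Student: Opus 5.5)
We argue by contradiction, following the template of Lemma~\ref{edgedegree}. Suppose $G$ is connected, $e(G)\ge 11$, at most one vertex has degree at most $2$, and $G$ contains no $2P_3$.

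\textbf{High-degree case.} If some vertex $v$ has $d(v)\ge 5$, then by Lemma~\ref{degree5} the graph $G-v$ has no edges or only a matching. Every vertex $u\ne v$ therefore has degree at most $2$ in $G$: one edge to $v$ and at most one matching edge. Since $e(G)\ge 11$ and $G-v$ carries at most a matching, $v$ has many neighbours, so there are at least two vertices other than $v$. All of them have degree at most $2$, contradicting the hypothesis that at most one vertex has degree at most $2$. Hence $\Delta(G)\le 4$.

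\textbf{Bounded-degree case.} Now $G$ is connected with $\Delta(G)\le 4$ and $e(G)\ge 11$, and Lemma~\ref{edgedegree} applies directly, producing a $2P_3$. This is a contradiction.

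The degree condition on low-degree vertices is therefore used only to rule out a high-degree vertex. The one point to check carefully is the count of vertices of degree at most $2$ in the first case. Since $e(G)\ge 11$ while $G-v$ has at most $\lfloor (|G|-1)/2\rfloor$ edges, we get $|G|\ge 3$ (indeed much more). So at least two vertices other than $v$ exist, and each has degree at most $2$.
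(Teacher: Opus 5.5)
Your proof is correct and takes the same route as the paper: split on $\Delta(G)$, apply Lemma~\ref{edgedegree} when $\Delta(G)\le 4$, and when $\Delta(G)\ge 5$ use Lemma~\ref{degree5} to force every vertex other than $v$ to have degree at most $2$. You spell out that last step more explicitly than the paper, and your count of at least two such vertices also follows directly from $d(v)\ge 5$, without going through $e(G)\ge 11$.
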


\begin{proof}
If $\Delta(G)\le 4$, then the result follows from Lemma~\ref{edgedegree}. 
If $\Delta(G)\ge 5$, then the result follows from Lemma~\ref{degree5}.
\end{proof}

\begin{lemma}\label{lem:2P3}
\[
R_k(2P_3, P_3, \ldots, P_3)=
\begin{cases}
6, & \text{if}\ k=2,\\[2pt]
k+3, & \text{if}\ k\ \text{is odd},\\[2pt]
k+2, & \text{if}\ k\ \text{is even and}\ k\ge 4.
\end{cases}
\]
\end{lemma}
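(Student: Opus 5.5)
Throughout, color $1$ is the color that must avoid $2P_3$ and colors $2,\dots,k$ must avoid $P_3$. Avoiding a monochromatic $P_3$ in color $i\ge 2$ means exactly that color $i$ is a matching, so in $K_N$ each vertex meets at most one edge of each color $i\in[2,k]$. Every vertex therefore has color-$1$ degree at least $N-1-(k-1)=N-k$. The plan is to pair this degree bound with the structural Lemmas~\ref{degree5}, \ref{edgedegree} and \ref{lem:edge2P3}.

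\emph{Lower bounds.} For $k$ odd I need a coloring of $K_{k+2}$. Since $k+1$ is even, Lemma~\ref{lem:Harary} decomposes $K_{k+1}$ into $k$ perfect matchings. I use colors $2,\dots,k$ on $k-1$ of them and color $1$ on the last. I then add one new vertex joined to everything in color $1$. Color $1$ is now a star plus a matching, and a star centred at the new vertex together with a disjoint matching contains no $2P_3$, since every $P_3$ in it uses the new vertex. For $k$ even, $k\ge4$, I need $K_{k+1}$, which has an odd number of vertices. Color $1$ must then contain some $P_3$, but a single $P_3$ or triangle is allowed. Take $K_{k+2}$ minus one vertex, decomposed via a $1$-factorization of $K_{k+2}$ into $k+1$ near-perfect matchings of $K_{k+1}$. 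Give $k-1$ of them colors $2,\dots,k$ and merge the remaining two into color $1$. I still need to check that these two near-perfect matchings can be chosen so their union has no $2P_3$; this fails in general, since the union is a union of paths and cycles on $k+1$ vertices. So instead I will take color $1$ to be a $P_3$ plus a perfect matching on the rest and verify that the complement decomposes into $k-1$ matchings, by hand or via a suitable $1$-factorization trick. For $k=2$ the value $6$ is the classical $R(2P_3,P_3)=6$: color $1$ is $K_5$ minus a perfect matching plus... I will simply cite or check the small case directly.

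\emph{Upper bounds.} Set $N=k+3$ for $k$ odd and $N=k+2$ for $k$ even. Then $\delta(G_1)\ge 3$ for $k$ odd and $\delta(G_1)\ge2$ for $k$ even. If $\Delta(G_1)\ge5$, Lemma~\ref{degree5} says $G_1-v$ is a matching, contradicting $\delta(G_1-v)\ge \delta(G_1)-1\ge1$ unless the degrees force $1$-regularity on an odd vertex set; this is exactly the parity argument used in the base case of Lemma~\ref{eventheorem}. So $\Delta(G_1)\le4$. If $G_1$ is disconnected, each component has minimum degree at least $2$ and hence contains a $P_3$, giving $2P_3$. If $G_1$ is connected, I rerun the base-case arguments of Lemmas~\ref{eventheorem} and \ref{oddtheorem} essentially verbatim: the counting in the odd case needs only about $k+3\ge 8$ vertices, and for small $k$ the count still works. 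In the even case the argument using a cycle or a vertex at distance $3$ needs enough vertices, which is why I expect small $k$ to fail.

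The main obstacle will be the small cases ($k=2,3,4$, possibly $5,6$), where the generic counting arguments break and a short direct case check is needed. The even lower-bound construction will also need explicit care.
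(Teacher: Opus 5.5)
\textbf{Even-$k$ lower bound.} This is the genuine gap. Your fallback coloring of $K_{k+1}$, with color $1$ a $P_3$ plus a perfect matching on the other $k-2$ vertices, cannot exist. Each color $i\in[2,k]$ is a matching on $k+1$ (odd) vertices, so it has at most $k/2$ edges. That forces color $1$ to have at least $\binom{k+1}{2}-\frac{(k-1)k}{2}=k$ edges, while your color-$1$ graph has only $\frac{k+2}{2}<k$ for $k\ge 4$. The construction that works meets this count exactly with a spanning star. Join one vertex $x$ to all others in color $1$, and give the $k-1$ perfect matchings of a $1$-factorization of $K_{k+1}-x\cong K_k$ the colors $2,\dots,k$; this is Lemma~\ref{lem:main1.1} with $2\ell=k$. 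In fact no new construction is needed. The paper simply applies Theorem~\ref{thm:main1} with $n_1=2$, $n_2=\cdots=n_k=1$, whose bound $2\max\{\lceil k/2\rceil,2\}+2$ equals $6$, $k+3$, $k+2$ in the three cases. For $k=2$ the lower bound is trivial anyway, since $K_5$ is too small to contain $2P_3$.

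\textbf{Upper bound for small $k$.} Your argument is incomplete exactly where you expect it to be. The reduction to $\Delta(G_1)\le 4$ via Lemma~\ref{degree5} and the parity argument is correct. The problem is that the recycled base cases need more vertices than you have:
\begin{itemize}
\item In Lemma~\ref{oddtheorem}, the $k$ vertices off a fixed $P_3$ send at least $2k$ edges into it, against a capacity of $3+2+3=8$. This is a contradiction only for $k\ge 5$, so the count does \emph{not} survive at $k=3$.
\item In Lemma~\ref{eventheorem}, a vertex at distance at least $3$ from $v$ is guaranteed only when $k+2\ge 8$, so $k=4$ is left open.
\end{itemize}
These cases are easy, but they must actually be done. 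For $k=3$, $\delta(G_1)\ge 3=|G_1|/2$ gives a Hamiltonian $C_6$. For $k=4$ with $\Delta(G_1)=3$, each neighbor of $v$ has at most one neighbor among the two vertices outside $N[v]$, so those two vertices cannot both have degree at least $2$ without creating $2P_3$. The $k=2$ upper bound likewise needs its one line: $\delta(G_1)\ge 4$ on six vertices.

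\textbf{How the paper avoids this.} The paper splits on $\kappa(G_1)$ rather than $\Delta(G_1)$. When $\kappa\ge 2$ it uses a longest cycle via Lemma~\ref{lem:Dirac}. When $\kappa=1$ it uses a cut vertex $v$, with $|G_1-v|$ odd. When $\kappa=0$ it takes one $P_3$ per component. This works uniformly for all $k\ge 3$ and never needs the large-$k$ hypotheses of the $R_k(2P_3)$ lemmas.
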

\begin{proof}
  The lower bound follows immediately from Theorem~\ref{thm:main1}. Hence, it remains to prove the upper bound.
  
  When $k=2$, one can easily verify the more general statement that $R(tP_3, P_3)=3t$. Therefore, we assume $k\ge 3$ in the following.
  
  Let $G_1$ denote the subgraph induced by all edges of color $1$, and let $\kappa(G_1)$ represent the connectivity of $G_1$. For each $i\in[2,k]$, any vertex can be incident with at most one edge of color $i$, as otherwise a $P_3$ in color $i$ would be formed. Thus, when $k$ is odd, $\delta(G_1)\ge (k+2)-(k-1)=3$; when $k$ is even, $\delta(G_1)\ge (k+1)-(k-1)=2$.
  
  When $\kappa(G_1)\ge 2$, let $C_{\ell}$ be a longest cycle in $G_1$. By Lemma~\ref{lem:Dirac}, $\ell\ge 4$. If $\ell\ge 6$, then $C_{\ell}$ contains $2P_3$ as a subgraph, so the theorem holds trivially. If $\ell=5$, noting that $|G_1|\ge 6$ and $\kappa(G_1)\ge 2$, there exists at least one edge between $C_5$ and $G_1-V(C_5)$, say $xy$, where $x\in V(C_5)$ and $y\notin V(C_5)$. It is easy to verify that $G_1[V(C_5)\cup\{y\}]$ contains $2P_3$ as a subgraph. If $\ell=4$, noting that $|G_1|\ge 6$ and $\kappa(G_1)\ge 2$, there exists a matching of two edges between $C_4$ and $G_1-V(C_4)$, say $x_1y_1,x_2y_2$, where $x_1,x_2\in V(C_4)$ and $y_1,y_2\notin V(C_4)$. It is easy to verify that $G_1[V(C_4)\cup\{y_1,y_2\}]$ contains $2P_3$ as a subgraph.
  
  When $\kappa(G_1)=1$, let $v$ be a cut vertex of $G_1$. Then, $G_1-v$ has at least two connected components. If $G_1-v$ contains a $P_3$ as a subgraph, say $x_1x_2x_3$, let $u$ be a vertex that belongs to a different connected component from this $P_3$. Since $\delta(G_1)\ge 2$, $u$ has at least two neighbors, none of which belong to $\{x_1,x_2,x_3\}$. Therefore, $G_1$ contains $2P_3$ as a subgraph. If $G_1-v$ does not contain a $P_3$ as a subgraph, since $G_1-v$ has an odd number of vertices, there must exist a vertex $w$ that is isolated in $G_1-v$. Hence, $w$ has at most one neighbor in $G_1$, contradicting $\delta(G_1)\ge 2$.
  
  When $\kappa(G_1)=0$, since $\delta(G_1)\ge 2$, every connected component of $G_1$ contains a $P_3$ as a subgraph. Therefore, $G_1$ contains $2P_3$ as a subgraph.
\end{proof}
\begin{lemma}\label{lem:3P3}
\[
R_k(3P_3, P_3, \ldots, P_3)=
\begin{cases}
9, & \text{if}\ k\in\{2,3,4\},\\[2pt]
k+4, & \text{if}\ k\ \text{is odd and}\ k\ge 5,\\[2pt]
11, & \text{if}\ k=6,\\[2pt]
k+3, & \text{if}\ k\ \text{is even and}\ k\ge 8.
\end{cases}
\]
\end{lemma}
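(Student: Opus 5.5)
Lower bounds first. Theorem~\ref{thm:main1} with $n_1=3$ and $n_2=\cdots=n_k=1$ gives $R_k(3P_3,P_3,\ldots,P_3)\ge 2\max\{\lceil k/2\rceil,3\}+3+k-k+1$.
\begin{itemize}
\item For $k\le 6$ this is $6+4=10$. The claimed value $9$ for $k\in\{2,3,4\}$ is smaller than $10$, so I must have misapplied something. Indeed $\sum n_i-k+1=3+(k-1)-k+1=3$, so the bound is $2\max\{\lceil k/2\rceil,3\}+3$, which equals $9$ for $k\le 6$.
\item For odd $k\ge 7$ it gives $k+1+3=k+4$, and for even $k\ge 8$ it gives $k+3$. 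For $k=5$ the bound is $9=k+4$.
\item For $k=6$ the bound is $9$, and I need a separate coloring of $K_{10}$ to reach $11$. Color $1$ should contain no $3P_3$ and colors $2,\ldots,6$ should each be a matching, so every vertex has color-$1$ degree at least $9-5=4$. I would take color $1$ to be $K_5\cup K_5$, plus perhaps one more edge. Each $K_5$ contains only one disjoint $P_3$ (it has $5<6$ vertices), so $K_5\cup K_5$ contains no $3P_3$. The bipartite $K_{5,5}$ between the two halves is $5$-regular, hence decomposes into $5$ perfect matchings, which serve as colors $2,\ldots,6$. So $K_{10}$ is colored without a bad configuration, giving $R\ge 11$.
\end{itemize}

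Upper bounds. As in Lemma~\ref{lem:2P3}, in a coloring of $K_N$ with no $P_3$ in colors $2,\ldots,k$, each color $i\ge 2$ is a matching, so $\delta(G_1)\ge N-1-(k-1)=N-k$. In every case I want to show that $G_1$ contains $3P_3$.
\begin{itemize}
\item $N=k+4$ ($k$ odd): then $\delta\ge 4$ on $k+4$ vertices.
\item $N=k+3$ ($k$ even, $k\ge 8$): then $\delta\ge 3$, and there is a parity improvement. If a vertex had color-$1$ degree exactly $3$, then all $k-1$ matchings would cover it. Since $N$ is odd, no matching is perfect, so each color misses at least one vertex, and summing degrees forces many vertices to have degree at least $4$.
\item $N=9$ ($k\le 4$): then $\delta\ge 9-k\ge 5$.
\item $N=11$ ($k=6$): then $\delta\ge 5$ on $11$ vertices.
\end{itemize}

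The main tool will be a $3P_3$ analogue of Lemma~\ref{degree5}. If some vertex $v$ has large degree (at least $7$), then $G_1-v$ contains no $2P_3$, since otherwise $v$ together with two neighbors outside that $2P_3$ gives the third cherry. Then $G_1-v$ is a $2P_3$-free graph whose minimum degree is still at least $\delta-1$. Lemma~\ref{lem:edge2P3} or Lemma~\ref{degree5}, combined with the degree counts, will give a contradiction. The counts work because $\delta-1\ge 3$ forces $e(G_1-v)\ge 11$ on a component, and at most one vertex can then have low degree.

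If instead $\Delta(G_1)\le 6$, I would take a maximum collection of disjoint cherries, at most $2P_3$ on $6$ vertices $S$. The rest $R=V\setminus S$ spans no $P_3$, so each vertex of $R$ sends at least $\delta-1$ edges to $S$. Counting edges into $S$ gives $|R|(\delta-1)\le 6\Delta$, and augmenting-path style swaps should reduce $\Delta$ further. This double count settles the large-$k$ cases, since $|R|=N-6$ grows with $k$.

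The main obstacle is the small cases $k\in\{2,3,4,6\}$ with $N=9$ or $11$, and $k=5$ with $N=9$. There $|R|$ is only $3$ or $5$, so the count is not decisive. For these I would argue structurally: $G_1$ on $9$ vertices with $\delta\ge 5$ (or $\delta\ge 4$ when $k=5$) is $2$-connected. Lemma~\ref{lem:Dirac} then gives a cycle of length at least $\min\{2\delta,9\}\ge 8$. A cycle of length $9$ yields $3P_3$ directly. For a cycle of length $8$ there is one vertex outside it with at least $4$ neighbors on the cycle, and inserting it suitably gives $3P_3$. The case $k=6$, $N=11$ is similar, using $2\delta\ge 10$.
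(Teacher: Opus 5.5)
Your lower bound is correct and matches the paper's: Theorem~\ref{thm:main1} gives $2\max\{\lceil k/2\rceil,3\}+3$, and for $k=6$ you use colour~$1$ equal to $K_5\cup K_5$ with a $1$-factorization of $K_{5,5}$ in colours $2,\dots,6$. The upper bound, however, has a genuine gap in the bounded-degree case. First, the high-degree reduction needs $d(v)\ge 8$, not $7$: a $2P_3$ in $G_1-v$ may occupy six neighbours of $v$, and you need two more. So the residual case is $\Delta(G_1)\le 7$. Take a maximum packing of two cherries on $S$; their four edges already use $8$ of the degree inside $S$, so your double count reads $|R|(\delta-1)\le 6\Delta-8\le 34$. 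That is $3(k-2)\le 34$ for odd $k$ and $2(k-3)\le 34$ for even $k$. This is contradictory only for $k\ge 15$ (odd) and $k\ge 22$ (even). Hence every odd $k$ with $7\le k\le 13$ and every even $k$ with $8\le k\le 20$ is covered neither by your Dirac argument nor by the count. For these $k$ you rely on ``augmenting-path style swaps should reduce $\Delta$ further'', which is not an argument.

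There are further errors. For even $k$ you only have $\delta(G_1)\ge 3$, so $G_1-v$ has minimum degree $2$, possibly with many vertices of degree $2$, and Lemma~\ref{lem:edge2P3} does not apply. Your parity remark does not rescue this: if colours $2,\dots,k$ are perfect matchings of $K_N-w$ for a single vertex $w$, then $w$ has colour-$1$ degree $N-1$ and every other vertex has colour-$1$ degree exactly $3$. Also, $\delta\ge 4$ on $9$ vertices ($k=5$) and $\delta\ge 5$ on $11$ vertices ($k=6$) do not force $2$-connectivity: two $K_5$'s, respectively two $K_6$'s, sharing a vertex are counterexamples. These cut-vertex cases are easy but must be treated.

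The clean fix is to run your small-case idea for every $k$, as the paper does, by splitting on $\kappa(G_1)$. For $\kappa\ge 2$, Lemma~\ref{lem:Dirac} with $\delta\ge 3$ gives a longest cycle $C_\ell$ with $\ell\ge 6$. The case $\ell\ge 9$ is trivial, and $\ell\in\{7,8\}$ is finished by attaching outside vertices. The case $\ell=6$ uses maximality of the cycle together with the fact that $N$ is odd in every case. So the $P_3$-free graph $G_1-V(C_6)$ has an isolated vertex, whose at least three neighbours on $C_6$ must be $\{x_1,x_3,x_5\}$ or $\{x_2,x_4,x_6\}$. For $\kappa\le 1$, $\delta\ge 3$ gives two pieces each containing a $P_3$. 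One finishes with Lemma~\ref{lem:edge2P3} on the larger piece, or with $C_6$ or long paths through the cut vertex when that piece has only $6$ or $7$ vertices. No bound on $\Delta$ and no counting are needed.
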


\begin{proof}
When $k\neq 6$, the lower bound follows from Theorem~\ref{thm:main1}. 

When $k=6$, we construct a $6$-edge-coloring of $K_{10}$ with vertex set $V(K_{10})=\{x_1,\ldots,x_5,y_1,\ldots,y_5\}$.

Let
\[
A=\{x_1,x_2,x_3,x_4,x_5\}
\qquad\text{and}\qquad
B=\{y_1,y_2,y_3,y_4,y_5\}.
\]
Color all edges inside $A$ and all edges inside $B$ with color $1$ (black). Thus the color-$1$ subgraph is $K_5\cup K_5$.

It remains to color the edges between $A$ and $B$. For each $t\in\{0,1,2,3,4\}$, let
\[
M_t=\{x_i\,y_{i+t}: i\in[5]\},
\]
where the subscripts are taken modulo $5$, with $y_0=y_5$. Then $M_0,M_1,M_2,M_3,M_4$ are five pairwise edge-disjoint perfect matchings of the complete bipartite graph $K_{5,5}$ with bipartition $(A,B)$. Color the edges of $M_t$ with color $t+2$.

This defines a $6$-edge-coloring of $K_{10}$. In color $1$, the monochromatic graph is $K_5\cup K_5$, and hence it contains no copy of $3P_3$, because each component has only five vertices and so can contain at most one vertex-disjoint copy of $P_3$. For each color $i\in\{2,3,4,5,6\}$, the monochromatic graph is a perfect matching, and therefore contains no copy of $P_3$.

Consequently, this coloring of $K_{10}$ contains neither a monochromatic copy of $3P_3$ in color $1$ nor a monochromatic copy of $P_3$ in any of the other five colors. Therefore, $R_6(3P_3,P_3,\ldots,P_3)\ge 11$.

\begin{figure}[htbp]
\centering
\begin{tikzpicture}[
    x=1cm,y=1cm,
    vertex/.style={circle,draw=black,fill=white,inner sep=2.1pt},
    lab/.style={font=\small},
    blackedge/.style={draw=black,line width=0.9pt},
    c2/.style={draw=blue!75!black,line width=1.05pt},
    c3/.style={draw=orange!90!black,line width=1.05pt},
    c4/.style={draw=red!85!black,line width=1.05pt},
    c5/.style={draw=green!60!black,line width=1.05pt},
    c6/.style={draw=pink!70!black,line width=1.05pt}
]

\def\R{1.35}

\coordinate (a1) at ({-4 + \R*cos(90)},{ 1.95 + \R*sin(90)});
\coordinate (a2) at ({-4 + \R*cos(18)},{ 1.95 + \R*sin(18)});
\coordinate (a3) at ({-4 + \R*cos(-54)},{1.95 + \R*sin(-54)});
\coordinate (a4) at ({-4 + \R*cos(-126)},{1.95 + \R*sin(-126)});
\coordinate (a5) at ({-4 + \R*cos(162)},{1.95 + \R*sin(162)});

\coordinate (b6)  at ({-4 + \R*cos(90)},{ -1.95 + \R*sin(90)});
\coordinate (b7)  at ({-4 + \R*cos(18)},{ -1.95 + \R*sin(18)});
\coordinate (b8)  at ({-4 + \R*cos(-54)},{-1.95 + \R*sin(-54)});
\coordinate (b9)  at ({-4 + \R*cos(-126)},{-1.95 + \R*sin(-126)});
\coordinate (b10) at ({-4 + \R*cos(162)},{-1.95 + \R*sin(162)});

\foreach \u/\v in {
a1/a2,a1/a3,a1/a4,a1/a5,a2/a3,a2/a4,a2/a5,a3/a4,a3/a5,a4/a5,
b6/b7,b6/b8,b6/b9,b6/b10,b7/b8,b7/b9,b7/b10,b8/b9,b8/b10,b9/b10}
{
    \draw[blackedge] (\u)--(\v);
}

\foreach \p in {a1,a2,a3,a4,a5,b6,b7,b8,b9,b10}
    \node[vertex] at (\p) {};

\node[lab,above]       at (a1) {$x_1$};
\node[lab,right]       at (a2) {$x_2$};
\node[lab,below right] at (a3) {$x_3$};
\node[lab,below left]  at (a4) {$x_4$};
\node[lab,left]        at (a5) {$x_5$};

\node[lab,above]       at (b6)  {$y_1$};
\node[lab,right]       at (b7)  {$y_2$};
\node[lab,below right] at (b8)  {$y_3$};
\node[lab,below left]  at (b9)  {$y_4$};
\node[lab,left]        at (b10) {$y_5$};

\node[font=\small] at (-4,4.05) {Color $1$: two disjoint copies of $K_5$};

\foreach \i/\x in {1/0,2/1.9,3/3.8,4/5.7,5/7.6}{
    \coordinate (u\i) at (\x, 2.55);
    \coordinate (v\i) at (\x, -2.55);
}

\foreach \i in {1,2,3,4,5}
    \draw[c2] (u\i)--(v\i);

\foreach \i/\j in {1/2,2/3,3/4,4/5,5/1}
    \draw[c3] (u\i)--(v\j);

\foreach \i/\j in {1/3,2/4,3/5,4/1,5/2}
    \draw[c4] (u\i)--(v\j);

\foreach \i/\j in {1/4,2/5,3/1,4/2,5/3}
    \draw[c5] (u\i)--(v\j);

\foreach \i/\j in {1/5,2/1,3/2,4/3,5/4}
    \draw[c6] (u\i)--(v\j);

\foreach \i in {1,2,3,4,5}{
    \node[vertex] at (u\i) {};
    \node[vertex] at (v\i) {};
}

\node[lab,above] at (u1) {$x_1$};
\node[lab,above] at (u2) {$x_2$};
\node[lab,above] at (u3) {$x_3$};
\node[lab,above] at (u4) {$x_4$};
\node[lab,above] at (u5) {$x_5$};

\node[lab,below] at (v1) {$y_1$};
\node[lab,below] at (v2) {$y_2$};
\node[lab,below] at (v3) {$y_3$};
\node[lab,below] at (v4) {$y_4$};
\node[lab,below] at (v5) {$y_5$};

\node[font=\small] at (3.8,4.05) {Colors $2$--$6$: a decomposition of $K_{5,5}$ into five perfect matchings};

\end{tikzpicture}
\caption{A $6$-edge-coloring of $K_{10}$ witnessing $R_6(3P_3,P_3,\ldots,P_3)\ge 11$.}
\label{fig:lowerbound-R6-3P3}
\end{figure}

It remains to prove the upper bound.

For $k=2$, the stronger identity $R(tP_3,P_3)=3t$ is immediate. Hence we may assume that $k\ge 3$. Let
\[
N=
\begin{cases}
9, & \text{if } k\in\{3,4\},\\
k+4, & \text{if } k\ge 5 \text{ is odd},\\
11, & \text{if } k=6,\\
k+3, & \text{if } k\ge 8 \text{ is even}.
\end{cases}
\]
Consider an arbitrary $k$-edge-coloring of $K_N$. Suppose that there is no monochromatic copy of $P_3$ in any color $i\in[2,k]$. Let $G_1$ be the spanning subgraph of $K_N$ whose edge set consists of all edges of color $1$. We shall show that $G_1$ contains a copy of $3P_3$.

Since no color $i\in[2,k]$ contains a copy of $P_3$, each vertex is incident with at most one edge of color $i$ for every $i\in[2,k]$. Therefore
\[
\delta(G_1)\ge N-1-(k-1)=N-k\ge 3.
\]

We distinguish three cases according to the connectivity of $G_1$.

First suppose that $\kappa(G_1)\ge 2$. Let $C_\ell$ be a longest cycle in $G_1$. By Lemma~\ref{lem:Dirac}, we have $\ell\ge 6$. If $\ell\ge 9$, then $C_\ell$ clearly contains three vertex-disjoint copies of $P_3$. Thus we may assume that $\ell\in\{6,7,8\}$.

If $\ell=8$, then, since $|G_1|\ge 9$ and $G_1$ is $2$-connected, some vertex $y\in V(G_1)\setminus V(C_8)$ is adjacent to a vertex of $C_8$. It is then immediate that $G_1[V(C_8)\cup\{y\}]$ contains a copy of $3P_3$.

If $\ell=7$, then $|G_1|\ge 9$. Since $G_1$ is $2$-connected, there exist two vertices $y_1,y_2\in V(G_1)\setminus V(C_7)$ and two edges from $\{y_1,y_2\}$ to $C_7$. Together with the cycle $C_7$, these two vertices yield three vertex-disjoint copies of $P_3$. Hence $G_1$ contains a copy of $3P_3$.

It remains to consider the case $\ell=6$. Write
\[
C_6=x_1x_2x_3x_4x_5x_6x_1
\]
and put $S=V(G_1)\setminus V(C_6)$. Since $C_6$ contains two vertex-disjoint copies of $P_3$, we may assume that $G_1[S]$ contains no copy of $P_3$; otherwise, $G_1$ already contains a copy of $3P_3$. Thus $G_1[S]$ is a disjoint union of isolated vertices and edges. Moreover, $|S|=N-6$ is odd, and hence $G_1[S]$ contains an odd number of isolated vertices.

Let $y$ be an isolated vertex of $G_1[S]$. Since $\delta(G_1)\ge 3$, the vertex $y$ has at least three neighbors on $C_6$. By the maximality of $C_6$, no two of these neighbors are consecutive on $C_6$. Hence
\[
N_{C_6}(y)=\{x_1,x_3,x_5\}
\quad\text{or}\quad
N_{C_6}(y)=\{x_2,x_4,x_6\}.
\]
If $G_1[S]$ contains at least three isolated vertices, say $y_1,y_2,y_3$, then two of them have the same neighborhood on $C_6$. Without loss of generality, assume
\[
N_{C_6}(y_1)=N_{C_6}(y_2)=\{x_1,x_3,x_5\}.
\]
If $N_{C_6}(y_3)=\{x_1,x_3,x_5\}$, then $y_1x_1y_2, y_3x_3x_2, x_4x_5x_6$
are three vertex-disjoint copies of $P_3$. If
$N_{C_6}(y_3)=\{x_2,x_4,x_6\}$, then $y_1x_1y_2, y_3x_2x_3, x_4x_5x_6$ are three vertex-disjoint copies of $P_3$. Hence $G_1$ contains a copy of $3P_3$.

We may therefore assume that $G_1[S]$ has exactly one isolated vertex, say $y_1$. Then, since $|S|\ge 3$ is odd, there is an edge $y_2y_3$ in $G_1[S]$. Without loss of generality, assume
\[
N_{C_6}(y_1)=\{x_1,x_3,x_5\}.
\]
Since $\delta(G_1)\ge 3$, each of $y_2$ and $y_3$ has at least two neighbors on $C_6$. Moreover, by the maximality of $C_6$, the neighbors of each of $y_2$ and $y_3$ on $C_6$ must be pairwise nonconsecutive. Moreover, if $x_i\in N_{C_6}(y_2)$ and $x_j\in N_{C_6}(y_3)$, then either $x_i=x_j$, or every path on $C_6$ between $x_i$ and $x_j$ has length at least three; otherwise, replacing such a path with $x_i y_2 y_3 x_j$ would produce a cycle longer than $C_6$. It follows that $N_{C_6}(y_2)\cup N_{C_6}(y_3)$ is one of
\[
\{x_1,x_4\},\qquad \{x_2,x_5\},\qquad \{x_3,x_6\}.
\]
In each case, however, we obtain a cycle longer than $C_6$. For instance, if $N_{C_6}(y_2)\cup N_{C_6}(y_3)=\{x_1,x_4\}$, then $x_1x_2x_3y_1x_5x_4y_3y_2x_1$ is a cycle of length $8$, contradicting the choice of $C_6$ as a longest cycle. The other two cases are analogous. This contradiction completes the case $\kappa(G_1)\ge 2$.

Next suppose that $\kappa(G_1)=1$. Let $v$ be a cut vertex of $G_1$. Since $\delta(G_1)\ge 3$, every vertex of $G_1-v$ has degree at least $2$ in its component. Thus every component of $G_1-v$ contains a copy of $P_3$. If $G_1-v$ has at least three components, then choosing one copy of $P_3$ from each of three components gives a copy of $3P_3$. Hence we may assume that $G_1-v$ has exactly two components, say $A$ and $B$. Moreover, if either component contains a copy of $2P_3$, then together with a copy of $P_3$ in the other component we obtain a copy of $3P_3$. Therefore, we may assume that neither $A$ nor $B$ contains a copy of $2P_3$.

If $k\le 5$, then $N=9$ and $\delta(G_1)\ge 4$. Hence each of the two components of $G_1-v$ has at least four vertices, and since they have altogether eight vertices, both components have order four. The minimum-degree condition forces both components to be copies of $K_4$, and $v$ is adjacent to every vertex of both components. It is then immediate that $G_1$ contains a copy of $3P_3$.

Now assume that $k\ge 6$. Then $N\ge 11$. Without loss of generality, let $|A|\le |B|$, and put
\[
G_2=G_1-V(A).
\]
Then $|G_2|\ge 6$.

If $|G_2|\ge 8$, then the minimum-degree condition gives $e(G_2)\ge 11$. Moreover, every vertex of $G_2$ other than possibly $v$ has degree at least $3$ in $G_2$. Hence, by Lemma~\ref{lem:edge2P3}, the graph $G_2$ contains a copy of $2P_3$. Together with a copy of $P_3$ in $A$, this gives a copy of $3P_3$ in $G_1$.

It remains to consider the case $6\le |G_2|\le 7$. Let
\[
G_3=G_1-V(B).
\]
Since $|G_3|=N+1-|G_2|$ and $|G_3|\le |G_2|$, we have $5\le |G_3|\le 7$. Choose a longest path $P'$ in $G_2$ with $v$ as an end-vertex, and a longest path $P''$ in $G_3$ with $v$ as an end-vertex. By the minimum-degree condition, it is straightforward to verify that both $P'$ and $P''$ have length at least $4$. Therefore, joining them at their common end-vertex $v$, we obtain a copy of $P_9$. This path contains a copy of $3P_3$.

Finally suppose that $\kappa(G_1)=0$. If $k\le 5$, then $N=9$ and $\delta(G_1)\ge 4$, which is impossible for a disconnected graph on nine vertices. Hence $k\ge 6$, and so $N\ge 11$. Since $\delta(G_1)\ge 3$, every component of $G_1$ contains a copy of $P_3$. If $G_1$ has at least three components, then we immediately obtain a copy of $3P_3$. Thus we may assume that $G_1$ has exactly two components, say $A$ and $B$. If either component contains a copy of $2P_3$, then together with a copy of $P_3$ in the other component we obtain a copy of $3P_3$. Therefore, assume that neither component contains a copy of $2P_3$. Without loss of generality, let
\[
|B|\ge \left\lceil \frac{N}{2}\right\rceil\ge 6.
\]

If $|B|\ge 7$, then $\delta(B)\ge 3$ implies
\[
e(B)\ge \frac{3|B|}{2}\ge \frac{21}{2},
\]
and hence $e(B)\ge 11$. By Lemma~\ref{lem:edge2P3}, the graph $B$ contains a copy of $2P_3$.

If $|B|=6$, then $\delta(B)\ge 3$ implies that $B$ is $2$-connected. Thus, by Lemma~\ref{lem:Dirac}, the graph $B$ contains a copy of $C_6$, and consequently contains a copy of $2P_3$.

In both cases, together with a copy of $P_3$ in $A$, this gives a copy of $3P_3$ in $G_1$.

In all cases, $G_1$ contains a copy of $3P_3$. Therefore every $k$-edge-coloring of $K_N$ contains either a copy of $3P_3$ in color $1$ or a copy of $P_3$ in some color $i\in[2,k]$. This proves the desired upper bound and completes the proof.
\end{proof}

\section{Gallai--Ramsey numbers of disjoint unions of cherries}\label{secforest1}

In this section, we present the following crucial theorem. Theorem~\ref{Wuconjecture1} can be easily derived from this result.

\begin{theorem}\label{main}
	Let $(G, \tau)$ be a Gallai $k$-colored complete graph with colors $1, 2, \ldots, k$. Assume that $(G, \tau)$ contains a monochromatic copy of $P_3$ in color $i$ for each $i\in [q]$, while no monochromatic copy of $P_3$ in color $j$ for any $j\in [q+1,k]$. Furthermore, let $n_1, n_2, \ldots, n_q$ be positive integers, and define $n_0$ to be $\max\{n_1,\ldots,n_q\}$. If \begin{equation}\label{equation_1}
    |G|\ge 2n_0+\sum_{i=1}^{q}n_i-q+1\,,
  \end{equation}
  then there exists an $i\in[q]$ such that $G$ contains a monochromatic copy of $n_iP_3$ in color $i$.
\end{theorem}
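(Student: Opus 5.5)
We argue by induction on $|G|$ combined with induction on $\sum_{i=1}^q n_i$. Suppose $(G,\tau)$ is a counterexample: it satisfies \eqref{equation_1} but contains no $n_iP_3$ in color $i$ for every $i\in[q]$. We may assume $|G|$ equals the bound in \eqref{equation_1}, by deleting vertices, provided the deletion keeps a $P_3$ in each color of $[q]$. If some color $i\in[q]$ loses all its copies of $P_3$, we move it to the set $[q+1,k]$; the bound for $q-1$ colors is then smaller by $n_i-1\ge 0$, or by more if $n_0$ drops, so induction applies. The colors $j\in[q+1,k]$ form matchings.

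\textbf{Step 1: Gallai partition.} We apply Theorem~\ref{Gallai} and choose a reduced partition $V_1,\dots,V_p$ with $p$ minimal, so that (for $p\ge 4$) exactly two colors, say red and blue, appear between parts. If $p=2$ or $p=3$, we may instead take $p=2$, with a single color between the two parts. Every $V_j$ with $|V_j|\ge 2$ sends edges of one color to another part, and this forces a $P_3$ in that color whenever some other part has size at least $2$. Since each matching color can appear between parts only in very degenerate situations (essentially $p=2$ with $|V_1|=|V_2|=1$), we may assume that red and blue both belong to $[q]$, or that only one color, say red, appears between parts.

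\textbf{Step 2: extracting cherries greedily.} Let $A$ be the subgraph formed by the red edges between parts together with the largest part. The key counting idea is the following. If the reduced graph has a red component whose parts have total size $m$, we can find about $\lfloor m/3\rfloor$ disjoint red cherries inside it, except in a near-star configuration. So if the total red "cross" structure spans at least $3n_{\rm red}$ vertices, we are done. Otherwise red is sparse, and we delete a small set of vertices, or a whole part, that destroys a red cherry. We then apply induction with $n_{\rm red}$ reduced by one, while $|G|$ drops by at most $1$ if $n_{\rm red}<n_0$, and by at most $3$ if $n_{\rm red}=n_0$ is the unique maximum. This matches the decrease of the bound, which is $1$ and $3$ respectively. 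This is the standard "remove one monochromatic $P_3$ and induct" mechanism, and it explains why the bound has the shape $2n_0+\sum n_i$.

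\textbf{Step 3: the main obstacle.} The hard case is when no red or blue cherry can be cheaply removed. The concrete difficulty is that deleting one red $P_3$ removes three vertices, but when $n_{\rm red}<n_0$ we can afford to lose only one vertex. Our plan is a counting argument in the spirit of Zhang--Song--Chen. First, we take a largest part $V_1$ and apply induction inside it. Second, if $|V_1|$ is small, all parts are small, and the two-colored reduced graph on $p$ vertices contains, by the Faudree--Schelp two-color result $R(aP_3,bP_3)=3a+b-1$, either $aP_3$ in red or $bP_3$ in blue. We blow this up using parts of size at least $2$ to pay for the extra vertices.

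We also need to handle the case where $V_1$ is large, so that $|G\setminus V_1|$ is at most about the sum of the other $n_i$'s. Let $V_1$ be joined in red to $R$ and in blue to $B$. Each vertex of $R$ together with two vertices of $V_1$ forms a red cherry, and the same holds for $B$ in blue. We find $\min\{|R|,\lfloor |V_1|/2\rfloor\}$ such red cherries and similarly blue ones, and then combine them with cherries found inside $V_1$ by induction, using the reduced values $n_{\rm red}-|R|$ and $n_{\rm blue}-|B|$. The bookkeeping to check is that each vertex of $R\cup B$ outside $V_1$ consumes exactly the unit it contributes to the bound. Where $n_0$ is attained by red, we need to verify the extra $2$ per unit, since each such cherry uses two vertices of $V_1$. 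We expect this accounting, particularly the case where the maximum $n_0$ is attained by a color different from the two cross colors, to be the main technical step.
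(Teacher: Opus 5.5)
Your proposal is a roadmap rather than a proof, and the step you explicitly defer is where the whole difficulty lies.

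Part of your large-part accounting is sound. Suppose you first induct inside the largest part $V_1$ with parameters $n_{\mathrm{red}}-|R|$ and $n_{\mathrm{blue}}-|B|$, and only afterwards attach cross cherries to unused vertices of $V_1$. This settles the subcase $|R|<n_{\mathrm{red}}$, $|B|<n_{\mathrm{blue}}$: there is room because $|B|\le n_{\mathrm{blue}}-1$ gives $|G|-|B|\ge 3n_{\mathrm{red}}$. The subcase $|R|\ge n_{\mathrm{red}}$, $|V_1|\ge 2n_{\mathrm{red}}$ is immediate.

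What remains is, e.g., $|R|\ge n_{\mathrm{red}}$ with $|V_1|<2n_{\mathrm{red}}$ (or the blue analogue), so every part has fewer than $2n_0$ vertices. This is the hard regime, and there you only propose to apply Faudree--Schelp to the reduced graph and ``blow up''. That is not an argument: a packing in the reduced graph counts parts rather than vertices, and the parts have unequal sizes. Also, colors other than red and blue live only inside parts. So nothing gets compared with the bound on $|G|$.

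Your Step~2 mechanism fails for the reason you yourself note. When $n_{\mathrm{red}}<n_0$ you may delete only one vertex $w$. After taking an inductive red $(n_{\mathrm{red}}-1)P_3$ in $G-w$, nothing guarantees two uncovered vertices forming a red cherry with $w$.

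The paper closes this regime with three ingredients you lack.

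\emph{First}, an exchange lemma: if disjoint $A,B$ are completely joined in color $i$ and $|A|+|B|\ge 2n_0+n_i$, then there is an $n_iP_3$ in color $i$. One deletes $w\in A$ and takes the inductive $(n_i-1)P_3$. If no uncovered vertex lies in the larger side $B$, one trades a cherry with two vertices in $B$ (one exists since $|B|\ge n_0+n_i/2$) for two new cherries using three uncovered vertices of $A$. This is what makes one vertex per unit affordable, and it yields $|R|,|B|\ge 3$ and $|V_1|\le 2n_0-2$.

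\emph{Second}, a proof that $n_i<n_0$ for every color other than red and blue, which is precisely the case you flag and leave open. The paper uses the lemma that a red/blue $K_s$ contains a monochromatic $K_{1,s-1}$ or a properly colored even cycle. It applies this to the parts meeting a maximum color-$3$ packing. A properly colored $C_{2\ell}$ yields $6\ell$ vertices carrying $2\ell$ disjoint cherries in red, in blue and in color $3$ simultaneously. One deletes them and inducts with all three parameters lowered by $2\ell$. The star case is handled by a further exchange argument built on the first ingredient.

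\emph{Third}, once $n_0=\max\{n_1,n_2\}$ and $n_i<n_0$ for $i\ge 3$, deleting any $K_4$ that contains both a red and a blue $P_3$ lowers the bound by $4$. So minimality forbids such a $K_4$. Yet one always exists: either a vertex of $V_1$ together with a vertex of $B$ and two of its blue neighbours in $R$, or a vertex of $V_1$ with a vertex of $R$ and two of its red neighbours in $B$. One of these must occur because $|R|,|B|\ge 3$.

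Without these ingredients, the deficit of three vertices per unit when $n_{\mathrm{red}}<n_0$ is never overcome.
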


Let us first examine how the above theorem leads to the statement of Theorem~\ref{Wuconjecture1}, which we will now restate.
 \begin{corollary}
	Let $k, n_1, \ldots, n_k$ be positive integers and $n=\max\{n_1,\ldots,n_k\}$. We have 
	\[GR(n_1P_3, \ldots, n_kP_3)=2n+\sum_{i=1}^{k}n_i-k+1\,\]
\end{corollary}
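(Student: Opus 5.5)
The lower bound $GR(n_1P_3,\ldots,n_kP_3)\ge 2n+\sum_{i=1}^k n_i-k+1$ is exactly Lemma~\ref{lem:main1.2}, so it remains to derive the upper bound from Theorem~\ref{main}. Let $N=2n+\sum_{i=1}^k n_i-k+1$ and fix an arbitrary Gallai coloring $\tau$ of $K_N$ with colors in $[k]$. Suppose first that for some color $j$ there is no monochromatic $P_3$ in color $j$ while $n_j=1$; this cannot happen, since $n_jP_3=P_3$ would then be absent, and we must instead argue differently. So I would split according to which colors contain a monochromatic $P_3$. If some color $i$ with $n_i=1$ contains a $P_3$, we are done immediately. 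Thus we may relabel colors so that colors $1,\ldots,q$ are exactly those containing a monochromatic $P_3$, and colors $q+1,\ldots,k$ contain none.

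If $q=0$, every color class is a matching, so each vertex has degree at most $k$ in total, forcing $N-1\le k$. But $N\ge 2n+k-k+1+\ldots\ge k+2$? More carefully, $N=2n+\sum(n_i-1)+1\ge 3$, and each color-$i$ graph being a matching means $N-1\le k$; I would need to rule this out. Actually $q=0$ with $N\ge 3$ forces a rainbow triangle: take any triangle in $K_N$; its three edges pairwise share a vertex, so no two can share a color (each color is a matching), giving a rainbow triangle, a contradiction. Hence $q\ge1$.

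Now apply Theorem~\ref{main} to $(K_N,\tau)$ with these $q$ colors and parameters $n_1,\ldots,n_q$, where $n_0=\max\{n_1,\ldots,n_q\}\le n$. The key check is inequality~(\ref{equation_1}):
\[
N=2n+\sum_{i=1}^k n_i-k+1 = 2n+\sum_{i=1}^q n_i-q+1+\sum_{i=q+1}^k (n_i-1)\ge 2n_0+\sum_{i=1}^q n_i-q+1,
\]
using $n\ge n_0$ and $n_i\ge1$. Theorem~\ref{main} then yields some $i\in[q]$ with a monochromatic $n_iP_3$ in color $i$, proving $GR\le N$.

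The only delicate point is the reduction step: Theorem~\ref{main} requires a $P_3$ in every one of the first $q$ colors and none in the rest, which is exactly arranged by the relabeling, and the nondegeneracy $q\ge1$ handled by the rainbow-triangle observation. All the real difficulty is deferred to Theorem~\ref{main} itself.
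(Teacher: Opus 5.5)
Your argument is correct and is essentially the paper's proof: you relabel so that exactly colors $1,\ldots,q$ contain a monochromatic $P_3$, obtain $q\ge 1$ because a triangle whose edges all lie in matching color classes would be rainbow, check (\ref{equation_1}) via $n\ge n_0$ and $n_i\ge 1$ for $i>q$, and invoke Theorem~\ref{main}, with the lower bound taken from Lemma~\ref{lem:main1.2} (the paper cites Wu et al.\ for it). The only flaw is your opening aside that a color $j$ with $n_j=1$ and no monochromatic $P_3$ ``cannot happen''. That situation can certainly occur and is simply absorbed by the relabeling, so that sentence should be deleted, along with the false start in the $q=0$ paragraph; neither is used in the proof.
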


\begin{proof}
  The lower bound has already been established by Wu, Magnant, Nowbandegani, and Xia~\cite{Wu2019}. Here, we will focus solely on the upper bound.

  Let $(G, \tau)$ represent a Gallai $k$-colored complete graph with $|G|=2n+\sum_{i=1}^{k}n_i-k+1$, where the colors are denoted as $1, 2, \ldots, k$. It is observed that for certain colors, $G$ contains a monochromatic copy of $P_3$, while for other colors, $G$ does not. Consequently, we can assume that $G$ contains a monochromatic copy of $P_3$ in color $i$ for each $i\in [q]$, while no monochromatic copy of $P_3$ exists in color $j$ for any $j\in [q+1,k]$. Note that this assumption can easily be obtained through the permutation of colors, and $0\le q\le k$. Since $G$ contains at least three vertices, and any Gallai coloring of three vertices must contain a monochromatic $P_3$, it follows that $q\ge 1$. Let $n_0=\max\{n_1,\ldots,n_q\}$. Clearly, $|G|\ge 2n_0+\sum_{i=1}^{q}n_i-q+1$, satisfying the conditions of Theorem~\ref{main}. Therefore, $G$ contains a monochromatic copy of $n_iP_3$ in color $i$ for some $i \in [k]$.
\end{proof}

Now we prove Theorem~\ref{main}. When $q=1$, we have $|G|\ge 3n_1$. Thus, $G$ contains $n_1$ pairwise disjoint triangles. Each triangle has at least two edges colored with color 1. Therefore, $G$ contains an $n_1P_3$ of color $1$ as a subgraph.

When $q=2$, we require the following lemma.

\begin{lemma}[Faudree and Schelp~\cite{Faudree1976}]\label{Faudree}
  $R(n_1P_3,n_2P_3)=\max\{3n_1+n_2-1, 3n_2+n_1-1\}$.
\end{lemma}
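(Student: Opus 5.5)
The plan is to treat Lemma~\ref{Faudree} as the special case $m=3$ of the Faudree--Schelp formula recalled in the introduction. For $n_1\ge n_2$ that formula gives $R(n_1P_3,n_2P_3)=3n_1+n_2\lfloor 3/2\rfloor-1=3n_1+n_2-1$, and the case $n_2\ge n_1$ follows by swapping the two colors. This yields the stated maximum, so the lemma is a direct citation. For completeness I would still sketch a self-contained argument, since only the upper bound is actually used in the case $q=2$ of Theorem~\ref{main}.

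\emph{Lower bound.} Take $n_1\ge n_2$ and the coloring from the proof of Lemma~\ref{lem:main1.2} with $k=2$. Color a $K_{3n_1-1}$ entirely with color $1$, and join $n_2-1$ further vertices to everything in color $2$. The color-$1$ graph then has fewer than $3n_1$ vertices, so it contains no $n_1P_3$. The $n_2-1$ added vertices cover every color-$2$ edge, and $n_2P_3$ has vertex cover number $n_2$, so there is no $n_2P_3$ in color $2$ either.

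\emph{Upper bound.} I would induct on $n_1+n_2$ with $n_1\ge n_2$ and $N=3n_1+n_2-1$; when $n_2=0$ the claim is the trivial statement that $3n_1$ vertices host $n_1$ disjoint triangles. There are two cases.
\begin{enumerate}[(i)]
\item If $n_1>n_2$, I would locate a color-$1$ cherry, delete its three vertices, and apply induction to $(n_1-1,n_2)$, which needs exactly $N-3$ vertices. If no color-$1$ cherry exists, the color-$1$ graph is a matching. Then color $2$ is nearly complete, and $n_2P_3$ in color $2$ can be read off greedily, since $N\ge 4n_2$.
\item If $n_1=n_2=n$, so that $N=4n-1$, I would look for a set of four vertices spanning a color-$1$ cherry and a color-$2$ cherry that share exactly one vertex. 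Removing these four vertices and inducting on $(n-1,n-1)$ still produces disjoint cherries in each color, provided the shared vertex is handled by re-routing. Concretely, each $2$-colored $K_4$ that is not monochromatic contains a cherry of each color, and a short case analysis would show they can be assigned so that the induction closes.
\end{enumerate}

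The main obstacle is the balanced case $n_1=n_2$. The shared vertex breaks vertex-disjointness, so one must either strengthen the induction hypothesis, for example to allow one prescribed extra vertex, or use the Faudree--Schelp structural argument. Given this difficulty, I would rely on the published result for this case.
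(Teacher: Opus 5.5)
Your approach is the paper's: the paper gives no proof of this lemma and simply cites Faudree and Schelp, and specializing $R(n_1P_m,n_2P_m)=n_1m+n_2\lfloor m/2\rfloor-1$ (for $n_1\ge n_2$) to $m=3$, then swapping colors, gives exactly the stated maximum. As an aside, the obstacle you flag in the balanced case is not real: the two cherries in the removed $4$-set never need to be disjoint from each other, because only one of them is combined with the $(n-1)P_3$ that induction supplies on the remaining $4(n-1)-1$ vertices (this is exactly the device of Claim~\ref{claimK4}), so your sketch closes once you check that such a $4$-set exists whenever both colors contain a cherry.
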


When $q=2$, we have $|G|\ge \max\{3n_1+n_2-1, 3n_2+n_1-1\}$. We consider colors 2, 3, ..., $k$ as a single color, which transforms this into a two-edge coloring of $G$. According to Lemma~\ref{Faudree}, $G$ either contains an $n_1P_3$ of color $1$, which completes the proof, or it contains a non-$1$-colored $n_2P_3$, meaning each $P_3$ has no edges colored with color 1. In the second case, each $P_3$ is contained in a triangle. Since a triangle in a Gallai coloring must have at least two edges of the same color, this triangle must have two edges colored with color 2. Consequently, $G$ contains an $n_2P_3$ of color $2$. This completes the proof for the case $q=2$. Based on the above proof, we will assume \[q\ge 3\,.\]

Assuming $n_i=1$ for some $i \in [q]$, then the proposition naturally holds according to the conditions of Theorem~\ref{main}. Therefore, we assume \[n_i\ge 2\ \text{for each}\ i \in [q]\,.\]

We shall employ the method of minimal counterexamples. Suppose to the contrary that there is such a graph $G$ that satisfies the conditions of the theorem, yet fails to contain any monochromatic $n_iP_3$ in color $i$ for each $i\in [q]$. Among all counterexamples to the theorem, we select $q$ as small as possible; then we choose $n_0$ as small as possible; subject to the choices of $q$ and $n_0$, we choose $\sum_{i=1}^{q}n_i$ as small as possible.

Let $\{V_1, \ldots, V_p\}$ be a Gallai partition of $(G-X, \tau)$ with $p$ being as small as possible and $p \ge 2$. We may assume that $|V_1| \le \cdots \le |V_p|$. According to Theorem~\ref{Gallai}, all edges within the reduced graph of $(G-X, \tau)$ are colored using at most two colors, say red and blue. Without loss of generality, let us assume that the color 1 is red, while the color 2 is blue.

\begin{claim}\label{3n0}
  Assume that $V(G)$ contains two disjoint non-empty vertex sets $A$ and $B$. Furthermore, let all edges between $A$ and $B$ be colored with the same color $i$ for some $i\in [2]$, and let $|A|+|B| \ge 2n_0+n_i$. Then $(G, \tau)$ contains an $n_iP_3$ of color $i$.
\end{claim}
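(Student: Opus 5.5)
The plan is to use the complete bipartite color-$i$ subgraph between $A$ and $B$ directly, and to treat the unbalanced case $\min\{|A|,|B|\}<n_i$ separately. Write $a=|A|\le b=|B|$ (by symmetry). First observe that $n_0\ge n_i$, so $a+b\ge 2n_0+n_i\ge 3n_i$.

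\textbf{Balanced case $a\ge n_i$.} The bipartite color-$i$ graph $K_{a,b}$ contains $\min\{a,\lfloor (a+b)/3\rfloor\}$ vertex-disjoint cherries. To see this, take each cherry with its center in $A$ and both leaves in $B$ as long as $B$ has at least twice as many unused vertices as $A$. Once that fails, switch to cherries with one vertex in $A$ and two in $B$ or two in $A$ and one in $B$, alternating so that the leftover parts stay balanced. This is a short counting argument. Since $a\ge n_i$ and $\lfloor (a+b)/3\rfloor\ge n_i$, we obtain $n_iP_3$ in color $i$ immediately.

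\textbf{Unbalanced case $a<n_i$.} Here $A$ alone cannot supply enough centers, so cherries must also be found inside $B$. First use every vertex $x\in A$ as the center of a cherry $yxz$ with $y,z\in B$. This gives $a$ disjoint color-$i$ cherries and leaves a set $B'\subseteq B$ with
\[
|B'|\ge 2n_0+n_i-3a\ge 3(n_i-a)+2(n_0-n_i)\ge 3(n_i-a).
\]
The remaining task is to find $n_i-a$ further color-$i$ cherries, allowing exchanges with the cherries already built. The main tool is the exchange step: if $u,v\in B'$ and $uv$ has color $i$, then any $x\in A$ gives the color-$i$ path $x u v$. If $B'$ is rich in color-$i$ edges, we can therefore re-route. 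Replace a cherry $yxz$ by $xuv$ together with a cherry centered in $B$, such as $y$–$u'$–$z$ when those edges have color $i$. We repeat such replacements greedily, keeping the number of disjoint cherries nondecreasing.

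If the replacements stall, $G[B']$ has very few color-$i$ edges. We then use that we are inside a minimal counterexample to Theorem~\ref{main}: $G$ contains no monochromatic $n_jP_3$ in color $j$ for any $j\in[q]$. Apply the Gallai structure (Theorem~\ref{Gallai}) to $G[B']$, together with the fact that every triangle of a Gallai coloring has two edges of a common color. From this we aim to show that a large color-$i$-sparse set $B'$ forces a long monochromatic structure in some other color $j$. Concretely, we would try to produce $n_jP_3$ in color $j$ via Claim-like counting on a part of the reduced graph of $G[B']$, contradicting the counterexample assumption.

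This last step is the one I expect to be the main obstacle. The danger is that the slack $2(n_0-n_i)$ in $|B'|$ is too small to force a cherry structure in a different color. If the direct argument fails, I would try a different induction. Remove one vertex of $A$ together with two vertices of $B$ forming a color-$i$ cherry; this preserves the hypothesis with $n_i$ replaced by $n_i-1$, since $|A|+|B|$ drops by $3$ while $2n_0+n_i$ drops by $1$. Continue until either $A$ is exhausted or the balanced case applies. Then handle the terminal situation $A=\emptyset$ by invoking the minimality of $\sum_j n_j$ on $G[B]$.
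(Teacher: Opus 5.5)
Your balanced case is correct and self-contained. If $|A|\ge n_i$ and $|A|+|B|\ge 3n_i$, the color-$i$ complete bipartite graph between $A$ and $B$ already contains $n_iP_3$, without using the counterexample hypothesis.

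\textbf{The gap is the unbalanced case $|A|<n_i$.} This is exactly where the claim stops being a statement about $A\cup B$ alone. With $|A|=1$ and no color-$i$ edges inside $B$, the set $A\cup B$ carries only one color-$i$ cherry. So the missing $n_i-|A|$ cherries must come from the global fact that $G$ is a minimal counterexample, and none of your three steps delivers them.
\begin{enumerate}[(i)]
\item Your exchange step presupposes color-$i$ edges inside $B'$ that nothing guarantees.
\item Finding $n_jP_3$ in another color inside $G[B']$ cannot work. Condition~(\ref{equation_1}) involves all $q$ colors, and $|B'|$, with slack only $2(n_0-n_i)$, comes nowhere near it.
\item The fallback induction has the arithmetic backwards. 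Deleting one vertex of $A$ and two of $B$ lowers $|A|+|B|$ by $3$, but lowers $2n_0+n_i$ by only $1$, since $n_0$ is unchanged unless $n_i$ is the unique maximum. So the hypothesis is destroyed, not preserved. Likewise, minimality of $\sum_j n_j$ cannot be invoked on $G[B]$, because $|B|$ does not satisfy~(\ref{equation_1}).
\end{enumerate}

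\textbf{The missing idea is to apply minimality to the whole graph minus one vertex.} Take $|A|\le|B|$ and fix $w\in A$. Then $|G-w|\ge 2n_0+\sum_j n_j-q$, which is~(\ref{equation_1}) with $n_i$ replaced by $n_i-1$. Since $G$ has no $n_jP_3$ in color $j\ne i$, this yields a color-$i$ copy $H_1$ of $(n_i-1)P_3$ avoiding $w$. Then augment as follows.
\begin{enumerate}[(i)]
\item If two vertices of $B$ avoid $H_1$, hang them on $w$.
\item Otherwise at most one vertex of $B$ avoids $H_1$. Counting gives at least $2n_0+n_i-(3n_i-3)\ge 3$ vertices of $A\cup B$ outside $H_1$.
\item If exactly one of these lies in $B$, it forms a cherry with two leftover vertices of $A$.
\item If none lies in $B$, then $|B|\ge n_0+n_i/2>n_i-1$ forces some cherry $x_1x_2x_3$ of $H_1$ to have two vertices in $B$, say $x_1,x_2$. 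Replace it by $w_1x_1w_2$ and $w_3x_2x_3$, using three leftover vertices $w_1,w_2,w_3\in A$; the other placements are analogous. This gives $n_i$ cherries.
\end{enumerate}
This argument handles both of your cases at once, and it is how the paper proceeds.
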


\begin{proof}
We will only prove the case where $i=2$, as the other case follows similarly. Without loss of generality, assume $|A| \le |B|$. Thus, we have $|B| \ge n_0 + \frac{n_2}{2}$. 

Select any vertex $w \in A$. By the induction hypothesis, $G-w$ contains a blue $(n_2-1)P_3$ as a subgraph, which we denote as $H_1$. If $|B\setminus V(H_1)| \ge 2$, then by choosing two vertices from $B\setminus V(H_1)$, the induced subgraph along with $w$ will contain a blue $P_3$. Together with $H_1$, this forms a blue $n_2P_3$, leading to a contradiction. Therefore, we conclude that $|B\setminus V(H_1)| \le 1$. 

From $|A| + |B| \ge 2n_0 + n_2$ and $|V(H_1)| = 3n_2 - 3$, we deduce that $|A\setminus V(H_1)| + |B\setminus V(H_1)| \ge 3$. 

If $|B\setminus V(H_1)| = 1$, then $|A\setminus V(H_1)| \ge 2$. By selecting one vertex from $B\setminus V(H_1)$ and two vertices from $A\setminus V(H_1)$, the induced subgraph on these three vertices will contain a blue $P_3$, which together with $H_1$ will again form a blue $n_2P_3$, leading to a contradiction. 

If $|B\setminus V(H_1)| = 0$, then $|A\setminus V(H_1)| \ge 3$. Let us denote the vertices in $A\setminus V(H_1)$ as $w_1, w_2, w_3$. Since $|B| \ge n_0 + \frac{n_2}{2}$, among the $n_2 - 1$ blue $P_3$ in $H_1$, there must be at least one blue $P_3$, denoted as $x_1x_2x_3$, with at least two of its vertices in $B$. Without loss of generality, assume $x_1, x_2 \in B$. We can then remove the blue $P_3 = x_1x_2x_3$ from $H_1$ and add two new blue $P_3$: $w_1x_1w_2$ and $w_3x_2x_3$. This results in a blue $n_2P_3$, leading to a contradiction. If instead $x_1, x_3 \in B$ or $x_2, x_3 \in B$, the same technique can be applied to obtain a blue $n_2P_3$.
\end{proof}  

Notice that for all $i\in[p-1]$, $V_i$ is either red-complete to $V_p$ or blue-complete to $V_p$. Let
\[
\begin{split}
 R=\bigcup\{V_i\colon\, i\in[p-1]\text{ and }V_i \text{ is red-complete to } V_p\} \text{ and}\\
 B=\bigcup\{V_i\colon\, i\in[p-1]\text{ and }V_i \text{ is blue-complete to } V_p\}.
\end{split}
\] We have the following claim.

\begin{claim}\label{RBsize}
  $|R|\ge 3$ and $|B|\ge 3$.
\end{claim}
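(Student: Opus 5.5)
The plan is to argue by contradiction: assume $|R|\le 2$ (the case $|B|\le 2$ is symmetric, with the roles of red $=1$ and blue $=2$ swapped), and show that then $V_p$ together with $B$ is so large that Claim~\ref{3n0} produces a blue $n_2P_3$, or else that the minimality of $p$ or of the counterexample is violated. The set $X$ removed before taking the Gallai partition is, as in the proofs of Zhang, Song, and Chen, a set of at most $\sum_{i}(n_i-1)$ vertices absorbing the colors that are not used on the reduced graph. So $|G-X|$ is still at least roughly $2n_0+n_1+n_2-1$. I will use only that lower bound on $|G-X|=|R|+|B|+|V_p|$.

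The first step is to bound $|V_p|$ from above. If $|V_p|$ is large, then $V_p$ alone, or $V_p$ together with the parts joined to it, is a smaller Gallai-colored graph. That graph is either a smaller counterexample, using fewer colors or smaller $n_i$, or it yields the needed monochromatic cherries. The key tool here is Claim~\ref{3n0}. Whenever $V_p$ is blue-complete to $B\neq\emptyset$ with $|B|+|V_p|\ge 2n_0+n_2$, we already have a blue $n_2P_3$, which is a contradiction. The same holds for red with $R$. Hence
\[
|B|+|V_p|\le 2n_0+n_2-1 \quad\text{if } B\ne\emptyset,
\]
and analogously $|R|+|V_p|\le 2n_0+n_1-1$ if $R\neq\emptyset$.

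Next I treat the degenerate cases. If $R=\emptyset$, then all of $V(G-X)\setminus V_p$ is blue to $V_p$. Then $V_p$ versus $\bigcup_{i<p}V_i$ is a two-part Gallai partition, and Claim~\ref{3n0} applies to the whole graph $G-X$, whose order is at least $2n_0+n_2$. This is a contradiction. So $R\neq\emptyset$ and $|R|\in\{1,2\}$. Since $|V_p|\ge |V_i|$ for every $i$ and $R$ consists of whole parts, each $V_i\subseteq R$ has size at most $2$. I then use $|V_1|\le\cdots\le|V_p|$: every part other than $V_p$ has size at most $|V_p|$, and $R$ contributes at most $2$ vertices. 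Combining $|B|+|V_p|\le 2n_0+n_2-1$ with $|G-X|\ge 2n_0+n_1+n_2-1$ gives $|R|\ge n_1\ge 2$. This forces $n_1=2$ and $|R|=2$, and possibly an equality case.

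The main obstacle is this residual tight case. There the counting alone does not finish: $|R|=2$ with $n_1$ small, and the lower bound on $|G-X|$ is exactly met. To handle it, I would exploit the red edges directly. $R$ is red-complete to $V_p$, and $|V_p|\ge 3$ because $V_p$ is the largest part and the graph is large. So each vertex of $R$ serves as the center of a red cherry using two vertices of $V_p$. Together with red cherries in the remaining graph, obtained from the minimal-counterexample hypothesis applied with $n_1$ reduced by one (or two), this produces a red $n_1P_3$. I would also need to recheck the constant $X$ carefully, since the exact form of the lower bound on $|G-X|$ determines whether the strict inequality $|R|\ge 3$ comes out. I expect this bookkeeping, rather than the structural argument, to be where the proof is delicate.
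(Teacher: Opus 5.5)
Your proposal has a genuine gap, and it comes from the set $X$. The paper never defines or removes such a set. The ``$G-X$'' where the Gallai partition is introduced is a leftover, and the paper's proof of this claim (and of Claim~\ref{Vp}) uses $|V_p|+|R|+|B|=|G|\ge 2n_0+\sum_{i=1}^{q}n_i-q+1$. By replacing this with $|G-X|\ge 2n_0+n_1+n_2-1$, you discard the term $\sum_{i=3}^{q}(n_i-1)$, which is exactly the slack that makes the claim immediate.

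Use the full bound together with the two reductions made just before the claim, namely $q\ge 3$ and $n_i\ge 2$ for all $i\in[q]$. Then the assumption $|R|\le 2$ gives
\[
|V_p|+|B|\ge 2n_0+\sum_{i=1}^{q}(n_i-1)-1\ge 2n_0+(n_2-1)+(n_1-1)+(n_3-1)-1\ge 2n_0+n_2,
\]
so Claim~\ref{3n0}, applied to $V_p$ and $B$, yields a blue $n_2P_3$. Your inequality $|B|+|V_p|\le 2n_0+n_2-1$ silently excludes $B=\emptyset$. In that case $R\cup V_p=V(G)$, and the red case of Claim~\ref{3n0} gives a red $n_1P_3$ instead. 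There is no residual tight case. Your counting stops at $|R|\ge n_1\ge 2$ only because of the discarded slack.

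Your sketch for that residual case ($n_1=2$, $|R|=2$) would not close it as written. First, $|V_p|\ge 3$ does not follow from $G$ being large. It does follow from the fact that a color-$3$ cherry must lie inside a single part, but two disjoint red cherries centered at the two vertices of $R$ would need $|V_p|\ge 4$. Second, the appeal to minimality with $n_1$ lowered does not have the vertex count. Deleting the three vertices of a red cherry lowers the threshold $2n_0+\sum_{i}n_i-q+1$ by only one unless $n_1$ is the unique maximum. It cannot be the unique maximum when $n_1=2$ and every $n_i\ge 2$. So the remaining graph need not be large enough for the induction hypothesis, and as it stands your plan does not reach $|R|\ge 3$.
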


\begin{proof}
We only need to prove the former, as the latter can be shown similarly. Using proof by contradiction, assume that $|R|\le 2$. Then, we have 
\[|V_p|+|B|\ge 2n_0+\sum_{i=1}^{q}(n_i-1)-1\ge 2n_0+n_2.\] 
According to Claim~\ref{3n0}, $G$ contains a blue $n_2P_3$, which leads to a contradiction and thus confirms that $|R|\ge 3$.
\end{proof}

\begin{claim}\label{Vp}
  $|V_p|\le 2n_0-2$.
\end{claim}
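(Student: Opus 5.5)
We argue by contradiction: assume $|V_p|\ge 2n_0-1$ and build a monochromatic $n_iP_3$ in color $i$ for red or blue, using Claim~\ref{3n0}. The natural pairs are $(V_p,R)$, which is red-complete, and $(V_p,B)$, which is blue-complete. By Claim~\ref{RBsize}, both $|R|\ge 3$ and $|B|\ge 3$. Hence
\[
|V_p|+|R|\ge 2n_0+2 \qquad\text{and}\qquad |V_p|+|B|\ge 2n_0+2 .
\]
So Claim~\ref{3n0} finishes immediately whenever $n_1\le 2$ or $n_2\le 2$. In general, however, we need $|R|\ge n_1+1$ or $|B|\ge n_2+1$. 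The plan is therefore to prove that $V_p$ is so large that one of these must hold, or else to build the cherries directly.

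The first step is to exploit the minimality of the counterexample inside $V_p$. Color all vertices of $G$ outside $V_p$ and view $G[V_p]$ as a Gallai-colored complete graph. Suppose $G[V_p]$ contains no red $P_3$, so that red is a matching there. Then, using Claim~\ref{3n0} on $(V_p,R)$, every vertex of $R$ together with two vertices of $V_p$ already forms a red cherry. Greedily, we get $\min\{|R|,\lfloor |V_p|/2\rfloor\}$ disjoint red cherries, and $\lfloor |V_p|/2\rfloor\ge n_0-1$. The remaining red cherries are then sought inside $G-V_p$, where the minimality of $n_0$ or of $\sum n_i$ gives the rest. The same argument applies to blue.

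In the general case, we use a counting trade-off. Let $a$ be the maximum number of disjoint red cherries in $G[V_p]$, with $a\le n_1-1$, and let $b$ be the analogous number for blue. Each red cherry of $V_p$ can be combined with cherries crossing between $R$ and $V_p$: each vertex of $R$ uses two leftover vertices of $V_p$. So we get $a+\min\{|R|,\lfloor(|V_p|-3a)/2\rfloor\}$ red cherries, plus those inside $R$. To conclude, we apply the induction hypothesis (Theorem~\ref{main} for smaller $\sum n_i$) to $G[V_p]$ with parameters $n_1-a'$ and $n_2-b'$. This shows that $V_p$ cannot be large while $a$ and $b$ are both small. Combined with $|R|+|B|=|G|-|V_p|$ and the size hypothesis (\ref{equation_1}), this forces $a+|R|\ge n_1$ or $b+|B|\ge n_2$ once $|V_p|\ge 2n_0-1$.

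The main obstacle is the bookkeeping in the last step. The cherries found inside $V_p$ use up vertices that are needed as leaves for the cross cherries. I would first try replacing $V_p$ by $V_p$ minus the vertex set of a maximum red cherry packing. I would then apply Claim~\ref{3n0} with the target adjusted to $n_1-a$ and $n_0$ replaced accordingly. This requires re-deriving that claim for reduced targets, which follows from the same induction on $\sum n_i$.
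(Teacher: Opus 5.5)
Your proposal has a genuine gap: the general case is never proved. The ``counting trade-off'' paragraph introduces undefined parameters $a',b'$ and applies Theorem~\ref{main} to $G[V_p]$. The earlier step likewise appeals to minimality inside $G-V_p$. In neither case do you check that the subgraph satisfies the size condition (\ref{equation_1}) with the reduced parameters. You then assert that this ``forces $a+|R|\ge n_1$ or $b+|B|\ge n_2$''. You also flag the bookkeeping as unresolved, and say Claim~\ref{3n0} would have to be re-derived for reduced targets. None of this is carried out, so as written the argument does not establish the claim.

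The missing idea is that no cherries inside $V_p$ are needed; the total order of $G$ does all the work. Since $V(G)=V_p\cup R\cup B$ is a partition, $2|V_p|+|R|+|B|=|G|+|V_p|$. Combine this with $|V_p|\ge 2n_0-1$, (\ref{equation_1}), $n_i\ge 2$ for all $i\in[q]$, and $q\ge 3$:
\[
2|V_p|+|R|+|B|\ \ge\ 4n_0+\sum_{i=1}^{q}(n_i-1)\ \ge\ 4n_0+(n_1-1)+(n_2-1)+(q-2)\ \ge\ 4n_0+n_1+n_2-1 .
\]
By pigeonhole, $|V_p|+|R|\ge 2n_0+n_1$ or $|V_p|+|B|\ge 2n_0+n_2$. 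Then Claim~\ref{3n0}, applied to the red-complete pair $(V_p,R)$ or the blue-complete pair $(V_p,B)$, gives the contradiction. This is exactly the paper's proof.

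You did write down $|R|+|B|=|G|-|V_p|$ but never used it directly. At $|V_p|=2n_0-1$ it already gives $|R|+|B|\ge n_1+n_2+q-2\ge n_1+n_2+1$, which is precisely your sufficient condition ``$|R|\ge n_1+1$ or $|B|\ge n_2+1$''. For larger $|V_p|$ the needed amounts $2n_0+n_1-|V_p|$ and $2n_0+n_2-|V_p|$ shrink, and the summed inequality above handles all cases at once.
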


\begin{proof}
Assuming that $|V_p|\ge 2n_0-1$, we have 
\[
2|V_p|+|R|+|B|\ge 4n_0+\sum_{i=1}^{q}n_i-q\ge 4n_0+n_1+n_2-1.
\]
By the pigeonhole principle, either $|V_p|+|R|\ge 2n_0+n_1$ or $|V_p|+|B|\ge 2n_0+n_2$. According to Claim~\ref{3n0}, the graph $G$ contains either a red $n_1P_3$ or a blue $n_2P_3$. This contradiction leads to the conclusion that $|V_p|\le 2n_0-2$.
\end{proof}

A graph is said to be \emph{properly colored} if adjacent edges are colored with different colors. We have the following important lemma, which can also be viewed as a result related to proper Ramsey numbers.

\begin{lemma}\label{proper}
  For $s\ge 2$, in any red-blue edge coloring of the complete graph on $s$ vertices, there exists either a monochromatic $K_{1,s-1}$ or a properly colored even cycle.
\end{lemma}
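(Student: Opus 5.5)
We argue by induction on $s$. The base case $s=2$ is trivial, since a single edge is a monochromatic $K_{1,1}$.

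For $s\ge 3$, fix a red-blue coloring of $K_s$ that contains no monochromatic $K_{1,s-1}$. Then every vertex is incident with at least one red edge and at least one blue edge. So the red graph $G_R$ and the blue graph $G_B$ both have minimum degree at least $1$.

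The plan is to build a long alternating walk by always switching colors. Start at any vertex $v_0$ and leave it along a red edge to $v_1$. From $v_1$ leave along a blue edge to $v_2$ (one exists by the degree condition). From $v_2$ leave along a red edge, and continue in this way, so that consecutive edges always have different colors. Because $K_s$ is finite, the walk eventually revisits a vertex. Take the first repetition, $v_j=v_i$ with $i<j$. The segment $v_iv_{i+1}\cdots v_j$ is then a closed walk whose consecutive edges alternate in color, and its internal vertices are distinct.

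The main obstacle is the closing vertex $v_i$. There the two edges $v_iv_{i+1}$ and $v_{j-1}v_j$ must have different colors for the cycle to be properly colored. Since the colors alternate along the walk, this happens exactly when the cycle length $j-i$ is even. If $j-i$ is odd, the closing edges share a color and we need a repair.

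To handle the odd case, strengthen the walk rule: at each step choose the next vertex $v_{t+1}$ among the neighbors of $v_t$ in the required color, preferring an already-visited vertex $v_i$ with $t+1-i$ even whenever one exists. If no such choice ever occurs, we instead argue with a maximal alternating path $P=v_0\cdots v_m$. By maximality, every neighbor of $v_m$ in the next required color lies on $P$. Each such neighbor $v_i$ must have $m-i$ odd, since otherwise an even properly colored cycle closes immediately. So $v_m$ is joined, in that color, only to vertices $v_i$ of one parity class.

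The remaining vertices on $P$, together with the vertices off $P$, receive the other color from $v_m$. Now use the non-star condition on $v_m$ and on $v_0$ (symmetrically, extending $P$ backwards) to find a chord $v_mv_i$ in the other color with $m-i$ of the appropriate parity. Such a chord closes an even alternating cycle: it pairs with the edge $v_iv_{i+1}$ if their colors differ and the length parity matches. The key bookkeeping is to check that the parity conditions forced by maximality at both ends are incompatible with $v_m$ having both colors present. This is the step that requires a careful case check.

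If that case analysis becomes unwieldy, the fallback is induction on $s$. Delete a vertex $x$ and apply the induction hypothesis to $K_{s-1}$. Either we obtain an even properly colored cycle directly, or some vertex $y$ is a monochromatic center of degree $s-2$ in $K_{s-1}$, say in red. In the latter case, $y$ is not a red $K_{1,s-1}$ in $K_s$, so $yx$ is blue. We then analyze the colors of the edges from $x$ and the other vertices: a vertex $z$ with $xz$ red yields a cycle $y\,x\,z\,w\,y$ whose $zw$ is blue and whose $wy$ is red, giving an alternating $C_4$. So the goal is to show such a $z$ and $w$ exist, using the fact that no vertex is a monochromatic star.
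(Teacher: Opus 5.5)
Your first route is not a proof as it stands. First, the parity is reversed. For $P=v_0\cdots v_m$, an edge $v_mv_i$ of the next required color closes the properly colored cycle $v_iv_{i+1}\cdots v_mv_i$ precisely when $m-i$ is odd. So maximality forces every such $v_i$ to have $m-i$ \emph{even}, not odd. More seriously, the decisive step is deferred to an unperformed ``careful case check'', and the mechanism you sketch cannot work as described. A chord $v_mv_i$ in the color of $v_{m-1}v_m$ cannot be closed up with the segment $v_i\cdots v_m$, because the two edges at $v_m$ would then have the same color. Making this route work requires exactly the paper's case analysis: take a chord from an endpoint in the color opposite to its path edge, then split on the color of the path edge at the chord's other end and on the color of the edge joining the two endpoints of the path.

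Your fallback induction, however, is correct, and the step you leave as a goal is immediate. Suppose $y$ is red to every vertex of $K_s-x$ and $yx$ is blue. Since $x$ is not the center of a blue $K_{1,s-1}$, some edge $xz$ is red, and $z\neq y$. Then $zx$ and $zy$ are both red. As $z$ is not the center of a red $K_{1,s-1}$, some edge $zw$ is blue, and necessarily $w\notin\{x,y\}$. Hence $wy$ is red, and $x\,z\,w\,y\,x$ is a properly colored $C_4$. So the proposal does prove the lemma, by a route different from the paper's.

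The paper takes a longest properly colored path $v_1\cdots v_t$ and imports $t\ge 4$ from the proper Ramsey result $PR(K_{1,s-1},P_4)=s$. It then argues by cases on the colors of the end edges and of the chords $v_1v_t$, $v_1v_{t-1}$, $v_2v_t$ (and $v_1v_3$, $v_2v_4$ when $t=4$). Your induction is shorter, needs no external citation, and proves more. Run with the strengthened hypothesis, it shows that the alternative to a monochromatic $K_{1,s-1}$ is always a properly colored $C_4$. That form already suffices for the application in Claim~\ref{nin0}, where it gives $\ell=2$. The longest-path argument gives no control over the cycle length; its only merit is that it is a direct extremal argument rather than an induction.
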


\begin{proof}
When $2\le s\le 3$, it is clear that a monochromatic $K_{1,s-1}$ can be found. For $s\ge 4$, assuming there is no monochromatic $K_{1,s-1}$, we will locate a properly colored even cycle within $K_s$. We start by finding the longest properly colored path in $K_s$. Let this path be denoted as $v_1v_2\cdots v_t$, where $4\le t\le s$. The condition $t\ge 4$ is easily verified; it essentially corresponds to a result related to proper Ramsey numbers: $PR(K_{1,s-1},P_4)=s$ for $s\ge 4$ (\cite{Olejniczak2019}, Proposition 2.5.2).

If the colors of the two edges $v_1v_2$ and $v_{t-1}v_t$ are different, we can assume without loss of generality that $v_1v_2$ is red and $v_{t-1}v_t$ is blue. The edge $v_1v_t$ can be either red or blue. By symmetry, we may assume that $v_1v_t$ is red. Notably, all edges between $v_1$ and any vertex outside this longest path must be red; otherwise, a longer properly colored path would exist, contradicting our previous assumption. To avoid forming a red $K_{1,s-1}$, vertex $v_1$ must be connected by a blue edge to some vertex on this path. Without loss of generality, assume $v_1v_i$ is blue, where $3\le i\le t-1$. If $v_iv_{i+1}$ is red, then $v_1v_iv_{i+1}\cdots v_tv_1$ forms a properly colored even cycle. Conversely, if $v_iv_{i+1}$ is blue, then $v_1v_2\cdots v_iv_1$ forms a properly colored even cycle.

If the colors of the edges $v_1v_2$ and $v_{t-1}v_t$ are the same, we can assume without loss of generality that both $v_1v_2$ and $v_{t-1}v_t$ are red. Since this path has an even number of vertices, we have either $t=4$ or $t\ge 6$. If the edge $v_1v_t$ is blue, then $v_1v_2\cdots v_tv_1$ forms a properly colored even cycle. Thus, we assume $v_1v_t$ is red. 

When $t=4$, since each vertex is connected to both red and blue edges, it follows that $v_1v_3$ and $v_2v_4$ must be blue. Consequently, $v_1v_3v_4v_2v_1$ forms a properly colored $C_4$. 

For $t\ge 6$, if $v_1v_{t-1}$ is red, then there exists some $3\le i\le t-2$ such that $v_1v_i$ is blue. Following the same argument as before, either $v_1v_iv_{i+1}\cdots v_{t-1}v_1$ or $v_1v_2\cdots v_iv_1$ forms a properly colored even cycle. Therefore, we assume $v_1v_{t-1}$ is blue. By symmetry, we may also assume that $v_2v_t$ is blue. Thus, $v_1v_2v_tv_{t-1}v_1$ forms a properly colored $C_4$.
\end{proof}

In the final part, we prove two key claims. The first is that $n_i < n_0$ for each $i \in [3,q]$. The second is that we can find four vertices such that removing these four vertices will eliminate one red $P_3$ and one blue $P_3$. We can then apply the inductive hypothesis to the remaining complete graph to complete our proof.

\begin{claim}\label{nin0}
  $n_i < n_0$ for each $i \in [3,q]$.
\end{claim}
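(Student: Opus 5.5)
We argue by contradiction, using the minimality of the counterexample. Suppose that $n_j=n_0$ for some $j\in[3,q]$. Then we remove one monochromatic structure and apply the minimality to a smaller instance in which either $q$ drops or $n_0$ drops. The natural move is to delete color $j$ from the picture entirely. Since $n_j\ge 2$ and $j\notin\{1,2\}$, color $j$ does not appear on edges between different parts of the Gallai partition. So every edge of color $j$ lies inside some $V_t$.

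The first step is to recolor all edges of color $j$ with color $1$ (red), or with whichever of the reduced-graph colors keeps the coloring Gallai. Recoloring a color that lives only inside parts, into a color of the reduced graph, creates no rainbow triangle. A triangle inside one part was not rainbow before; if it becomes rainbow, the replacement color must already be on one of its other edges. I would check this carefully, and if necessary recolor $j$ into a color that is present on that triangle; this is the delicate point. In the new coloring only the colors in $[q]\setminus\{j\}$ can contain a monochromatic $P_3$, with red target still $n_1$. The count is
\[
|G|\ge 2n_0+\sum_{i=1}^q n_i-q+1=2n_0+\sum_{i\ne j}n_i-(q-1)+1+(n_j-1).
\]
So there is slack of $n_j-1=n_0-1$. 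Moreover $\max_{i\ne j} n_i\le n_0$, so the hypothesis of Theorem~\ref{main} holds for $q-1$ colors.

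Next I apply minimality in $q$. The recolored graph must contain a monochromatic $n_iP_3$ in some color $i\ne j$. If $i\ne 1$, it is already present in the original coloring, a contradiction. If $i=1$, the red $n_1P_3$ may use former color-$j$ edges. This is where the slack is used. Instead of asking for $n_1$ red cherries, I would raise the red demand to $n_1+n_j$, treating red$\cup j$ as one color. Then $\max$ could become $n_1+n_0$, which breaks the count. So the cleaner route is different: apply minimality with target $n_1':=n_1+n_j-1$ for the merged color but keep track via $n_0$. Alternatively, use the fact that a merged $(n_1+n_j-1)P_3$ splits into red and color-$j$ cherries. A cherry with both edges red is red. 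A cherry with both edges of color $j$ is of color $j$. A mixed cherry lies in a triangle inside a part; in a Gallai coloring that triangle has two equal-colored edges, giving a pure cherry on the same vertices. By pigeonhole, either at least $n_1$ pure red cherries or at least $n_j$ pure color-$j$ cherries arise, a contradiction.

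The main obstacle is the counting. It must be verified that
\[
|G|\ge 2\max\{n_0,n_1+n_j-1\}+(n_1+n_j-1)+\sum_{i\ne 1,j}n_i-(q-1)+1.
\]
With $n_j=n_0$ this requires $2(n_1+n_0-1)\le 2n_0+\ldots$, which fails unless $n_1$ is small. So I expect to instead merge color $j$ with a color $i\in[3,q]$ of minimal $n_i$, or use Claim~\ref{Vp} and Claim~\ref{RBsize} to bound part sizes. With those bounds, each $V_t$ has at most $2n_0-2$ vertices, so color-$j$ cherries are localized. I would then compare directly with the red/blue cherries forced between $V_p$ and $R$, $B$ through Claim~\ref{3n0}. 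I would try the merging argument first and fall back to this localization if the inequality fails.
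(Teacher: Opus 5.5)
Your proposal has a genuine gap. The merging route cannot work, and the fallback you describe does not yet produce a contradiction.

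The ``delicate point'' is actually harmless: identifying two color classes never creates a rainbow triangle, because a triangle that is rainbow after merging was already rainbow before. The problem is elsewhere. Suppose you merge color $j$ into a color $c$ and keep the target $n_c$. Then minimality in $q$ only gives $n_c$ disjoint merged cherries. These split into $a$ pure $c$-cherries and $b$ pure $j$-cherries with $a+b=n_c$, which contradicts nothing.

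To force $a\ge n_c$ or $b\ge n_j$, you must ask for $n_c+n_j-1=n_c+n_0-1$ merged cherries. That target becomes the new maximum, so it is counted three times in (\ref{equation_1}). The required order then exceeds the guaranteed lower bound $2n_0+\sum_{i=1}^q n_i-q+1$ on $|G|$ by exactly $2n_c-2\ge 2$. This is the same for every choice of $c$: red, blue, or an $i\in[3,q]$ with minimal $n_i$. So changing the merge partner does not help. Your closing remarks about localizing color-$j$ cherries via Claim~\ref{Vp} and comparing with Claim~\ref{3n0} do not say where a contradiction comes from.

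The missing idea is to delete color-$j$ cherries together with equally many red and blue cherries, so that (\ref{equation_1}) stays balanced. Ties at $n_0$ are the whole difficulty. If $n_j$ were the unique maximum, deleting one color-$j$ cherry and lowering $n_j$ (hence $n_0$) by one would already suffice.

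The paper proceeds as follows.
\begin{itemize}
\item It lowers $n_3=n_0$ by one and uses minimality to obtain an $(n_0-1)P_3$ of color $3$ in $G$ itself, called $H_2$. Each of its cherries lies inside a single part, and Claim~\ref{Vp} rules out all of them lying in one part.
\item For the $s\ge 2$ parts meeting $V(H_2)$, Lemma~\ref{proper} is applied to the red/blue reduced $K_s$. It gives either a properly colored even cycle $C_{2\ell}$ or a monochromatic $K_{1,s-1}$.
\item In the cycle case, one color-$3$ cherry from each of the $2\ell$ parts yields $6\ell$ vertices. These carry $2\ell$ disjoint cherries in each of colors $3$, red and blue. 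Deleting them and lowering $n_1,n_2,n_3$ by $2\ell$ lets minimality finish.
\item In the star case (say red), exchange arguments give $|V'|+|V''|\ge 3n_0-1$. With Claim~\ref{3n0} this forces $n_1=n_0$ and equality. Further exchanges using the vertices outside $V'\cup V''$ then produce a red $n_1P_3$.
\end{itemize}

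None of these ingredients appears in your proposal: obtaining the $(n_0-1)P_3$ from minimality, Lemma~\ref{proper}, and the balanced three-color deletion.
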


\begin{proof}
  We use proof by contradiction. Assume $n_3=n_0$, which will lead to a contradiction. By the inductive hypothesis, the graph $G$ contains an $(n_0-1)P_3$ of color $3$, denoted as $H_2$. Since the edges between different parts $V_i$ and $V_j$ can only be red or blue, all three vertices of any $P_3$ of color $3$ must lie within the same part. If all vertices of $H_2$ are contained in a single part $V_i$, then $|V_p| \ge |V_i| \ge 3n_0 - 3 > 2n_0 - 2$, which contradicts Claim~\ref{Vp}. 

  Assume there are $s$ parts that intersect with $V(H_2)$, where $s \ge 2$. Now, we consider only these $s$ parts. In its reduced graph, we denote the vertices corresponding to these $s$ parts as $v_1, v_2, \ldots, v_s$. In the subgraph $K_s$, there are two cases: either there exists a monochromatic $K_{1,s-1}$, or there does not. We will discuss both scenarios.

  For the first case, let the part corresponding to the center of $K_{1,s-1}$ be denoted as $V'$, and let the union of the other $s-1$ parts that intersect with $V(H_2)$ be denoted as $V''$. Without loss of generality, we can assume that the edges between $V'$ and $V''$ are red. Let $V'$ contain $x$ disjoint $P_3$s of color $3$, then $V''$ contains $n_0-1-x$ disjoint $P_3$s of color $3$.

  It can be shown that $|V'| \ge 3x + 1$. This is because, if $|V'| = 3x$, removing one vertex $u_1$ from $V'$ means, by the inductive hypothesis, that $G - u_1$ contains an $(n_0 - 1)P_3$ of color $3$ as a subgraph, denoted as $H_3$. Since $|V'-\{u_1\}|=3x-1$, there can be at most $x - 1$ $P_3$s of color $3$ contained in $V'-u_1$. In the graph $G$, we can remove the $P_3$s from $H_3$ that are contained in $V'-\{u_1\}$ and replace them with the $x$ $P_3$s contained in $H_2$ that are in $V'$. This results in at least $n_0$ disjoint $P_3$s of color $3$, leading to a contradiction. Similarly, we can prove that $|V''| \ge 3(n_0 - 1 - x) + 1$. Thus, we have $|V'| + |V''| \ge 3n_0 - 1$.

  If $|V'| + |V''| \ge 2n_0 + n_1$, then according to Claim~\ref{3n0}, $G$ contains a monochromatic (i.e., red) $n_1P_3$, which leads to a contradiction. Therefore, we assume $n_1 = n_0$ and that $|V'| + |V''| = 3n_0 - 1$.

  In this case, it is easy to compute that there are at least $n_2 + n_0 - 1$ vertices remaining in $V(G) \setminus (V' \cup V'')$. If a vertex $w_1$ in $V(G) \setminus (V' \cup V'')$ connects to $V'$ with red edges, we can treat $V'$ as $A$ and $V'' \cup \{w_1\}$ as $B$. Utilizing Claim~\ref{3n0}, we find that $G$ contains a red $n_1P_3$, which leads to a contradiction. Hence, all edges between $V(G) \setminus (V' \cup V'')$ and $V'$ must be blue. If $|V'| \ge n_0 + 1$, then $|V(G) \setminus V''| \ge 2n_0 + n_2$, again utilizing Claim~\ref{3n0}, resulting in $G$ containing a blue $n_2P_3$, which is also a contradiction. Therefore, we have $|V'| \le n_0$ and $|V''| \ge 2n_0 - 1$.

  If there are $n_2$ vertices in $V(G) \setminus (V' \cup V'')$, each connecting to $V' \cup V''$ with at least $2n_2$ blue edges, we can easily find a blue $n_2P_3$, where each $P_3$'s center lies in $V(G) \setminus (V' \cup V'')$. Thus, there are at least $n_0$ vertices in $V(G) \setminus (V' \cup V'')$, denoted as the set $W$, where each vertex connects to $V' \cup V''$ with at most $2n_2 - 1$ blue edges. Given that $|V'| + |V''| = 3n_0 - 1$ and that all edges between $W$ and $V'$ are blue, each vertex in $W$ connects to $V''$ with at least $n_0$ red edges.

  We remove a vertex $u_2$ from $V'$. By the inductive hypothesis, $G - u_2$ contains a red $(n_0 - 1)P_3$ as a subgraph, which we denote as $H_4$. Notice that $|V' \setminus V(H_4)| + |V'' \setminus V(H_4)| \ge 2$. If $|V'' \setminus V(H_4)| \ge 2$, then selecting any two vertices from $V'' \setminus V(H_4)$ along with $u_2$ forms a red $P_3$. Combining this $P_3$ with $H_4$ results in a red $n_0P_3$, leading to a contradiction. Therefore, we have
\[|V'' \setminus V(H_4)| \le 1\,.\]
Thus, either there is one vertex in both $V'$ and $V''$ not in $H_4$, or there are two vertices in $V'$ not in $H_4$. One of these vertices must be $u_2$, and we denote the other vertex as $u_3$.

Since $|W| \ge n_0$ and $|H_4|=3n_0-3$, with at least $2n_0-2$ vertices of $H_4$ already in $V''$, there exists a vertex in $W$ not included in $H_4$, denoted as $w_2$. If $(N_R(w_2) \cap V'') \not\subseteq V(H_4)$, based on the assumption from the previous paragraph, the vertex $u_3$ is the one in $N_R(w_2) \cap V''$ not contained in $V(H_4)$. Therefore, $w_2u_3u_2$ forms a red $P_3$ disjoint from $H_4$, and combining it with $H_4$ forms a red $n_1P_3$, leading to a contradiction. 

If $(N_R(w_2) \cap V'') \subseteq V(H_4)$, since $w_2$ is adjacent to at least $n_0$ vertices in $V''$ with red edges, there must be a red $P_3$ in $H_4$ that contains at least two vertices in $N_R(w_2) \cap V''$. Without loss of generality, let $x_1x_2x_3$ be a red $P_3$ in $H_4$, where $x_1, x_2 \in N_R(w_2) \cap V''$ (or $x_1, x_3 \in N_R(w_2) \cap V''$). Then, by removing $x_1x_2x_3$ from $H_4$ and adding a red $P_3$ induced by $w_2, x_2, x_3$ and another red $P_3$ induced by $x_1, u_2, u_3$, we again obtain a red $n_1P_3$, leading to a contradiction.  

For the second case, according to Lemma~\ref{proper}, there is a properly colored even cycle, denoted as $C_{2\ell}$, in the reduced subgraph $K_s$. In graph $G$, we take a $P_3$ of color $3$ from each part corresponding to the vertices of $C_{2\ell}$. In total, we extract $6\ell$ vertices. These vertices include $2\ell$ disjoint $P_3$s of color $3$, $2\ell$ disjoint red $P_3$s, and $2\ell$ disjoint blue $P_3$s. If $n_i \le 2\ell$ for some $i \in [3]$, the proof is complete. Therefore, we assume $n_i > 2\ell$ for each $i \in [3]$. After removing these $6\ell$ vertices from $G$, the remaining graph has at least 
\[
2n_0 + \sum_{i=1}^{3}(n_i-2\ell) + \sum_{i=3}^{q}n_i - q + 1\,.
\]
By the induction hypothesis, the new graph contains $n_i-2\ell$ disjoint $P_3$s of color $i$ for some $i \in [3]$. Combining these with the $6\ell$ removed vertices, we can find $n_i$ disjoint $P_3$s of color $i$ in $G$ for some $i \in [3]$. This completes our proof.
\end{proof}

\begin{claim}\label{claimK4}
  There does not exist a subgraph $K_4$ in $G$ that contains both a red $P_3$ and a blue $P_3$ as subgraphs.
\end{claim}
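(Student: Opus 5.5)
Suppose, for a contradiction, that some set $Y$ of four vertices induces a $K_4$ containing a red $P_3$ and a blue $P_3$. The plan is to remove $Y$ and apply the minimality of the counterexample to $G-Y$. Let $n_1'=n_1-1$, $n_2'=n_2-1$ and $n_i'=n_i$ for $i\in[3,q]$, and let $n_0'=\max\{n_1',\ldots,n_q'\}$. If $G-Y$ contains an $n_i'P_3$ in color $i$ for some $i\in[q]$, then adding the corresponding $P_3$ from $Y$ (red for $i=1$, blue for $i=2$) gives an $n_iP_3$ of color $i$ in $G$. This is a contradiction. So the task reduces to showing that $G-Y$ satisfies the hypotheses of Theorem~\ref{main} with these parameters and is a smaller instance.

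The key step is the size count, using Claim~\ref{nin0}. Since $n_i<n_0$ for every $i\in[3,q]$, the maximum $n_0$ is attained by $n_1$ or $n_2$. If $n_1=n_2=n_0$, then $n_0'=n_0-1$. In that case
\[
|G-Y|\ge 2n_0+\sum_{i=1}^{q}n_i-q+1-4=2(n_0-1)+\sum_{i=1}^{q}n_i'-q+1,
\]
which is exactly inequality~\eqref{equation_1} for the new parameters. If only one of $n_1,n_2$ equals $n_0$, then $n_i\le n_0-1$ for all $i\in[3,q]$ and the other of $n_1,n_2$ is at most $n_0-1$, so again $n_0'=n_0-1$ and the same computation works. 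Thus $n_0$ always drops by one while the sum drops by $2$, and this exactly absorbs the loss of four vertices.

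Next I verify the remaining hypotheses. The new $n_i'$ must be positive; this holds because $n_1,n_2\ge2$ by the earlier reduction. The coloring of $G-Y$ is still a Gallai coloring. Some colors in $[q]$ may no longer contain a $P_3$ in $G-Y$. Since $n_i'\ge1$, any such color $i$ already rules out an $n_i'P_3$, so it can be moved to the ``no $P_3$'' class. This moves to a smaller $q$, and one checks that the inequality still holds after dropping the term $n_i'-1\ge0$ (the maximum can only decrease). Since $q$ or $n_0$ strictly decreases, minimality of the counterexample applies in every case. The sub-case where a dropped color was the unique maximizer needs a short separate check, but removing the term only helps.

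The main obstacle I expect is exactly this bookkeeping when $q$ changes: the new $q'$ may fall below $3$, where the theorem was proved directly. It is still covered, since the cases $q\le2$ were established unconditionally. Once minimality is justified, the contradiction is immediate.
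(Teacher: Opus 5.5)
Your proof is correct and is essentially the paper's argument: delete the four vertices of the $K_4$, use Claim~\ref{nin0} to see that the maximum drops to exactly $n_0-1$ while $n_1$ and $n_2$ each drop by one, check that the vertex bound (\ref{equation_1}) is preserved exactly, and then invoke minimality of the counterexample. Your extra bookkeeping for colors in $[q]$ that may lose all their $P_3$'s in $G-Y$ covers a point the paper passes over silently; moving such colors to the no-$P_3$ class only lowers the required bound and decreases $q$, and the only thing to add is that $q'\ge 1$ holds automatically, since $|G-Y|\ge 3$ and every Gallai-colored triangle contains a monochromatic $P_3$.
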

\begin{proof}
Suppose such a $K_4$ exists in the graph $G$. After removing these four vertices from $G$, the number of vertices in the resulting complete graph is at least
\[
  |V(G)|-4\ge 2(n_0-1)+(n_1-2)+(n_2-2)+\sum_{i=3}^{q}(n_i-1)+1\,.
\]
By Claim~\ref{nin0}, $n_i<n_0$ for $i \in [3,q]$, so $n_0-1 = \max\{n_1-1, n_2-1, n_3, n_4, \ldots , n_q\}$. Here, $n_0-1$, $n_1-1$, and $n_2-1$ replace $n_0$, $n_1$, and $n_2$ in Equation~(\ref{equation_1}), respectively. By the induction hypothesis, the new complete graph must contain either a red $(n_1-1)P_3$ or a blue $(n_2-1)P_3$. Together with the $K_4$ that was removed, this would result in finding a red $n_1P_3$ or a blue $n_2P_3$ in $G$, which leads to a contradiction.
\end{proof}

Note that the edges connecting $R$ and $B$ can only be red or blue. We first assume that there exists a vertex $v$ in $B$ that has at least two blue edges connecting to $R$. Then, by choosing any vertex from $V_p$ and any two vertices from $N_B(v) \cap R$, together with vertex $v$, the induced subgraph forms a $K_4$, which contains both a red $P_3$ and a blue $P_3$ as subgraphs. This contradicts Claim~\ref{claimK4}. Therefore, each vertex in $B$ has at most one blue edge connecting to $R$. By Claim~\ref{RBsize}, there exists a red-blue edge-colored $K_{3,3}$ between $R$ and $B$. Hence, there must exist a vertex $u$ in $R$ that has at least two red edges connecting to $B$. Now, by choosing any vertex from $V_p$ and any two vertices from $N_R(u) \cap B$, together with vertex $u$, the induced subgraph contains both a red $P_3$ and a blue $P_3$ as subgraphs. This again contradicts Claim~\ref{claimK4}. Thus, the proof of the theorem is complete.

\subsection*{Acknowledgements}

This research was supported by the National Key R\&D Program of China (Grant No.~2024YFA1013900) and by the National Natural Science Foundation of China (Grant No.~12471327).

\end{document}